\DeclareMathAlphabet{\mathpzc}{OT1}{pzc}{m}{it}
\newtheorem{theorem}{Theorem}[section]
\newtheorem{lemma}[theorem]{Lemma}
\newtheorem{corollary}[theorem]{Corollary}
\newtheorem{proposition}[theorem]{Proposition}
\theoremstyle{definition}
\theoremstyle{remark}
\newtheorem{remark}[theorem]{Remark}
\numberwithin{equation}{section}
\def\Xint#1{\mathchoice
 {\XXint\displaystyle\textstyle{#1}}%
 {\XXint\textstyle\scriptstyle{#1}}%
 {\XXint\scriptstyle\scriptscriptstyle{#1}}%
 {\XXint\scriptscriptstyle\scriptscriptstyle{#1}}%
 \!\int}
\def\XXint#1#2#3{{\setbox0=\hbox{$#1{#2#3}{\int}$}
 \vcenter{\hbox{$#2#3$}}\kern-.5\wd0}}
\def\dashint{\Xint-}
\newcommand{\definedas}{\mathrel{\raise.095ex\hbox{\rm :}\mkern-5.2mu=}}
\let\epsilon\varepsilon
\begin{document}

\title[Prescribing mean curvature on the unit ball]{Prescribing sign-changing mean curvature candidates on the $n+1$-dimensional unit ball}


\author[H. Zhang]{Hong Zhang}
\address[H. Zhang]{School of Mathematics, University of Science and Technology of China, No.96 Jinzhai Road, Hefei, Anhui, China, 230026.}
\email{\href{mailto: H. Zhang <matzhang@ustc.edu.cn>}{matzhang@ustc.edu.cn}}
\thanks{}

\subjclass[2010]{Primary 53C44; Secondary 35J60}

\keywords{conformal mean curvature flow, prescribed mean curvature, the unit ball, sign-changing function, blow-up analysis}

\date{\today \ at \currenttime}


\setpagewiselinenumbers
\setlength\linenumbersep{110pt}

\maketitle
\begin{abstract}
This paper focuses on the problem of prescribing mean curvature on the unit ball. Assume that $f$, which is allowed to change sign, satisfies Morse index counting condition or certain kind of symmetry condition. By using a negative gradient flow method, we then prove that $f$ can be realized as the boundary mean curvature of some conformal metric.
\end{abstract}
\section{introduction}
In this paper, we consider the prescribing boundary mean curvature problem on the $n+1$-dimensional unit ball with $n\geqslant2$. Such a problem is a natural analogue of the problem of prescribing scalar curvature on the sphere and it can be stated as follows. Let $(B^{n+1},g_e)$ be the $n+1$-dimensional unit ball with the Euclidean metric $g_e$. Assume that $f$ is a smooth function on the boundary $\partial B^{n+1}=S^n$. Then, one may ask if there exists a scalar-flat metric $g$ point-wisely conformally related to $g_e$, i.e., $g=u^{4/(n-1)}g_e$ for some positive and smooth function $u$, such that $f$ can be realized as the mean curvature of $g$. It is well known that this geometric problem is equivalent to finding a positive solution of the boundary value problem
\begin{equation}
  \label{mce}
  \left\{
  \begin{array}{ll}
  \Delta_{g_e}u=0,&\mbox{in}~~B^{n+1},\\
  \frac{2}{n-1}\frac{\partial u}{\partial\eta_e}+u=f(x)u^\frac{n+1}{n-1},&\mbox{on}~~S^n,
  \end{array}
  \right.
\end{equation}
where $\Delta_{g_e}$ and $\partial/\partial\eta_e$ are, respectively, the Laplace operator and the out normal derivative of the metric $g_e$.

Many research works have dealt with the Eq.\eqref{mce} during the past few decades, see, for instance \cite{ac, aco, cxy, es, eg, ho, xz}, and the references therein. Among them, Xu and the author \cite{xz}, recently, obtained the following result
\begin{theorem}[Xu \& Zhang]\label{xuzhang}
Let $n\geqslant 2$ and $f>0:S^n\rightarrow\mathbb{R}$ be a smooth Morse function satisfying the non-degeneracy condition: $|\nabla f|_{g_{S^n}}^2+|\Delta_{g_{S^n}}f|^2\neq0$ and the simple bubble condition: $\max_{S^n}f/\min_{S^n}f<\delta_n$, where $\delta_n=2^{1/n}$, $n=2$ and $\delta_n=2^{1/(n-1)}$, $n\geqslant3$. Moreover, define the numbers associated with $f$ as follows
$$m_i=\#\{\theta\in S^n; \nabla_{S^n}f(\theta)=0, \Delta_{g_{S^n}}f(\theta)<0,~~\mbox{ind}_f(\theta)=n-i\},$$
where $\mbox{ind}_f(\theta)$ denotes the Morse index of $f$ at critical point $\theta$. If the following algebraic system has no non-trivial solutions,
$$m_0=1+k_0, m_i=k_{i-1}+k_i, 1\leqslant i\leqslant n, k_n=0,$$
where coefficients $k_i\geqslant0$, then the Eq.\eqref{mce} admits at least one positive solution.
\end{theorem}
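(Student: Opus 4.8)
The plan is to recast \eqref{mce} variationally and to use the (globally defined, $J$-decreasing) conformal mean curvature flow as the deformation that stands in for the analytically ill-behaved negative gradient flow at the critical exponent. Positive solutions of \eqref{mce} correspond, after rescaling, to positive critical points of the scale-invariant functional
\begin{equation*}
J(u)=\frac{\displaystyle\int_{B^{n+1}}|\nabla u|^{2}\,dx+\frac{n-1}{2}\int_{S^n}u^{2}\,d\sigma}{\left(\displaystyle\int_{S^n}f\,|u|^{\frac{2n}{n-1}}\,d\sigma\right)^{\frac{n-1}{n}}}
\end{equation*}
on $\Sigma^{+}=\bigl\{u\in H^{1}(B^{n+1}):\ \int_{S^n}f\,|u|^{2n/(n-1)}\,d\sigma>0\bigr\}$, which is contractible because $f>0$. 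The obstruction is the loss of compactness at the critical trace exponent $2n/(n-1)$: along flow lines and in Palais--Smale sequences, the conformal factor may concentrate, splitting off standard bubbles $\delta_{a,\lambda}$ based at boundary points $a\in S^n$ with rate $\lambda\to\infty$. So the first step is to pin down the non-compact levels. Writing $S_n$ for the sharp Sobolev trace constant, a $k$-bubble configuration sits, to leading order, at energy $S_n\,k^{1/n}$ times a factor fixed by the values of $f$ at the concentration points; in particular the one-bubble levels fill $\bigl[\,S_n(\max_{S^n}f)^{-(n-1)/n},\,S_n(\min_{S^n}f)^{-(n-1)/n}\,\bigr]$, while the lowest two-bubble level is $S_n\,2^{1/n}(\max_{S^n}f)^{-(n-1)/n}$, so the former stays strictly below the latter whenever $\max_{S^n}f/\min_{S^n}f<\delta_n$, with $\delta_n$ as in the statement (the sharper bound at $n=2$ being imposed by the weaker bubble-interaction estimates in that dimension). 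Under the simple bubble condition, therefore, all concentration occurs through a single bubble and lives below the two-bubble level.

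The second step is the fine single-bubble analysis. Decomposing a near-bubble as $\alpha\,\delta_{a,\lambda}+v$, with $v$ orthogonal to $\delta_{a,\lambda}$ and to its kernel (the $S^n$-translations and the dilation), optimizing in $v$, and expanding, one obtains
\begin{equation*}
J\bigl(\alpha\,\delta_{a,\lambda}+v\bigr)=S_n\,f(a)^{-\frac{n-1}{n}}-\frac{c}{\lambda^{2}}\,\Delta_{g_{S^n}}f(a)+o\!\left(\lambda^{-2}\right),\qquad c>0,
\end{equation*}
so that descent of $J$ drives $a$ along $+\nabla_{g_{S^n}}f$ --- hence toward a critical point of $f$ --- and, once there, drives $\lambda\to\infty$ precisely when $\Delta_{g_{S^n}}f(a)<0$. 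A by now standard Bahri-type reduction then identifies the critical points at infinity: there is exactly one over each critical point $\theta$ of $f$ with $\Delta_{g_{S^n}}f(\theta)<0$, and its Morse index equals $n-\operatorname{ind}_f(\theta)$ --- the dilation direction being stable (that is what the sign condition $\Delta_{g_{S^n}}f(\theta)<0$ buys us), the remaining unstable directions being the $n-\operatorname{ind}_f(\theta)$ directions along which $f$ increases at $\theta$. Consequently the numbers $m_i$ are exactly the counts of critical points at infinity of index $i$, $0\leqslant i\leqslant n$; the non-degeneracy hypothesis $|\nabla_{g_{S^n}}f|^{2}+|\Delta_{g_{S^n}}f|^{2}\neq0$ ensures this list is finite and non-degenerate and that the reduction is uniform.

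The last step is Morse theory along the flow. If \eqref{mce} had no solution, then --- using the simple bubble condition to guarantee that the topology of $\Sigma^{+}$ is already assembled below the two-bubble level --- the only critical objects crossed below that level would be the critical points at infinity catalogued above, so $\Sigma^{+}$, being contractible, would be obtained from the empty set by attaching, for each of them, a cell of dimension equal to its Morse index. The resulting Morse relations read
\begin{equation*}
\sum_{i=0}^{n}m_{i}\,t^{i}=1+(1+t)\sum_{i=0}^{n}k_{i}\,t^{i},\qquad k_{i}\geqslant0,
\end{equation*}
and, comparing coefficients together with the fact that the left-hand side has degree at most $n$, this is exactly the algebraic system $m_0=1+k_0$, $m_i=k_{i-1}+k_i$ for $1\leqslant i\leqslant n$, and $k_n=0$ --- the $k_i$ being the ranks of the boundary maps in the Morse complex at infinity and the summand $1$ the Poincar\'{e} polynomial of a point. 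By hypothesis this system has no admissible solution, a contradiction; hence $J$ has a genuine critical point, which (after rescaling, and replacing $u$ by $|u|$) is positive by the strong maximum principle and solves \eqref{mce}. The crux of the whole argument is the two-bubble exclusion --- tying a $C^{0}$ a priori estimate to the precise threshold $\delta_n$ --- together with the single-bubble expansion and the ensuing index count; granted these, the Morse-theoretic conclusion is purely formal.
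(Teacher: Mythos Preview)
This theorem is quoted from \cite{xz} rather than proved in the present paper, but the paper's proof of its sign-changing extension (Theorem~\ref{main}) follows exactly the scheme of \cite{xz}, and your outline matches it: the conformal mean curvature flow serves as the global $E_f$-decreasing deformation; the simple bubble hypothesis forces at most one concentration point; that point must be a critical point of $f$ with $\Delta_{S^n}f<0$ and contributes an $(n-\operatorname{ind}_f)$-cell to the sublevel sets; and the resulting Morse polynomial identity $\sum_i m_it^i=1+(1+t)\sum_i k_it^i$ yields the contradiction. The only difference in packaging is that \cite{xz} and the present paper do not write the near-bubble as $\alpha\,\delta_{a,\lambda}+v$ in the Bahri style; instead they pull back by a conformal map $\phi$ chosen so that the normalized metric $h=\phi^*g=\varv^{4/(n-1)}g_e$ satisfies $\int_{S^n}x\,d\mu_h=0$, prove $\varv\to1$ via a compactness--concentration lemma (Theorem~\ref{cc}), and track the concentration point through the center of mass $Q(t)=S(t)/|S(t)|$. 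This Schwetlick--Struwe/Malchiodi--Struwe normalization carries the same information as your $(a,\lambda)$ coordinates, and the final Morse-theoretic step (Proposition~\ref{homotopy} and \cite[Theorem~4.3]{ch}) is identical to yours.
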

For the prescribed function $f$ possesses some kind of symmetry, Ho \cite{ho} proved a similar result as Leung \& Zhou did in \cite{lz}. Before stating his result, let us describe two types of symmetries
\begin{itemize}
 \item [(Sym1)] A mirror reflection upon a hyperplane $\mathscr{H}\subset{\mathbb{R}^{n+1}}$ passing through the origin. As the situation is invariant under a rotation, we assume that $\mathscr{H}$ is perpendicular to the $x^1$-axis. In this way, a mirror reflection $\sigma: S^n\mapsto S^n$ is given by $\sigma(x^1, x^2, \dots, x^{n+1})=(-x^1, x^2, \dots, x^{n+1})$, for $(x^1, x^2, \dots, x^{n+1})\in S^n$. As a result, $\Sigma=\{(0, x^2, \dots, x^{n+1})\in S^n\}=\mathscr{H}\cap S^n$ is the fixed point set. 
 \item [(Sym2)] A rotation of angle $\theta/k$ with axis being a straight line in $\mathbb{R}^{n+1}$ passing trough the origin and $k>1$ being an integer. We may assume that the straight line is the $x^{n+1}$-axis. In this case $\Sigma=\{\mathrm{N},\mathrm{S}\}$ is the fixed point set, where $\mathrm{N}$ is the north pole and $\mathrm{S}$ is the south pole.
\end{itemize}

Now, Ho's result can be stated as
\begin{theorem}[Ho]\label{ho}
Suppose $f>0$ is a smooth function on $S^n$ which is invariant under the symmetry {\upshape (Sym1)} or {\upshape (Sym2)}.  Assume that there exists a point $y\in\Sigma$ with $f(y)=\max_\Sigma f$ and $\Delta_{S^n}f(y)>0$. If, in addition, there holds
  \begin{equation}\label{sbc1}
  \max_{S^n}f/\max_\Sigma f<2^{1/(n-1)},
  \end{equation}
  then there exists a smooth positive solution of Eq.\eqref{mce}.
\end{theorem}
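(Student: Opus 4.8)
\emph{Proof strategy.} The plan is to produce a positive solution of \eqref{mce} as the strong $H^1$-limit of a conformal mean curvature flow started from a carefully chosen symmetric initial datum, using the two hypotheses on $f$ to exclude every possible loss of compactness. Let $G\subset O(n+1)$ be the symmetry group at hand — the order-two group generated by $\sigma$ in case (Sym1), or the cyclic group of order $k\geqslant2$ generated by the rotation $R$ in case (Sym2) — with fixed-point set $\Sigma$, and let $H_G\subset H^1(B^{n+1})$ be the subspace of $G$-invariant functions. Work with the quotient functional
\[
J(u)=\frac{\frac{2}{n-1}\int_{B^{n+1}}|\nabla u|^2+\int_{S^n}u^2}{\bigl(\int_{S^n}f\,|u|^{\frac{2n}{n-1}}\bigr)^{\frac{n-1}{n}}},\qquad \int_{S^n}f\,|u|^{\frac{2n}{n-1}}>0,
\]
and with the conformal mean curvature flow $u(t)$, which (up to reparametrization) is the negative gradient flow of $J$: it decreases $J$, preserves positivity, and its stationary points are, after a harmless scaling, exactly the positive solutions of \eqref{mce}. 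Since $J$ and the flow are built from the fixed background $g_e$, they are $O(n+1)$-equivariant, so by parabolic uniqueness the flow preserves $H_G$; a strong $H^1$-limit of the flow along a time sequence is then a positive, $G$-symmetric solution of \eqref{mce}, positivity being upgraded by the Hopf lemma. Writing $S\definedas J\big|_{f\equiv1}$ for the sharp trace--Sobolev constant, a single standard bubble concentrating at $x\in S^n$ has $J\to S f(x)^{-(n-1)/n}$, and along a non-precompact Palais--Smale sequence (with $J$ bounded) one has the Struwe-type quantization $J(u_k)^n\to S^n\sum_{i=1}^m f(x_i)^{-(n-1)}+(\text{nonnegative residue})$, the bubbles sitting at distinct points $x_1,\dots,x_m$.

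\emph{Initial datum below the first critical level.} Set $c_\ast\definedas S\,(\max_\Sigma f)^{-(n-1)/n}$; note $\max_\Sigma f>0$ since $f>0$. Let $\phi_\lambda$ be the standard bubble concentrating at the distinguished point $y\in\Sigma$ (with $f(y)=\max_\Sigma f$ and $\Delta_{S^n}f(y)>0$); as $y$ is fixed by $G$ and $\phi_\lambda$ is radial about the axis through $y$, we have $\phi_\lambda\in H_G$. On the conformally flat ball there is no Weyl-type obstruction, and the standard expansion gives
\[
J(\phi_\lambda)=c_\ast\Big(1-c_n\,\frac{\Delta_{S^n}f(y)}{f(y)}\,\lambda^{-2}\bigl(1+o(1)\bigr)\Big),\qquad c_n>0,
\]
with $\lambda^{-2}$ replaced by $\lambda^{-2}\log\lambda$ when $n=2$; this is the one place where $\Delta_{S^n}f(y)>0$ is used. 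Hence we may fix $\lambda_0$ with $J(\phi_{\lambda_0})<c_\ast-\delta_0$ for a fixed $\delta_0>0$, and take $u(0)=\phi_{\lambda_0}$, so that $J(u(t))\leqslant J(\phi_{\lambda_0})<c_\ast-\delta_0$, $u(t)>0$, and $u(t)\in H_G$ for all $t\geqslant0$.

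\emph{Excluding loss of compactness.} Either $u(t_k)\to u_\infty$ strongly in $H^1$ along some $t_k\to\infty$ — and then $u_\infty$ is the desired positive solution of \eqref{mce} and we are done — or, since $J>0$ is bounded below along the flow, the gradient-flow identity produces a non-precompact Palais--Smale sequence $u(t_k)$ with $J(u(t_k))<c_\ast$, to which the quantization above applies at distinct concentration points $x_1,\dots,x_m\in S^n$; as $f>0$, $\lim_k J(u(t_k))^n\geqslant m\,S^n(\max_{S^n}f)^{-(n-1)}$. If some $x_i\notin\Sigma$, then by $G$-invariance of $u(t_k)$ its whole $G$-orbit — of cardinality $\geqslant2$ in both (Sym1) and (Sym2) — consists of concentration points, so $m\geqslant2$ and
\[
\lim_k J(u(t_k))^n\ \geqslant\ 2\,S^n(\max_{S^n}f)^{-(n-1)}\ >\ S^n(\max_\Sigma f)^{-(n-1)}\ =\ c_\ast^{\,n},
\]
the middle inequality being exactly \eqref{sbc1}; this contradicts $J(u(t_k))<c_\ast$. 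Hence every $x_i\in\Sigma$; but then already $m\geqslant1$ forces $\lim_k J(u(t_k))^n\geqslant S^n(\max_\Sigma f)^{-(n-1)}=c_\ast^{\,n}$, contradicting $J(u(t_k))<c_\ast-\delta_0$. So compactness cannot be lost, the flow converges, and the theorem follows.

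\emph{The main obstacle.} The technical core is the analysis of the conformal mean curvature flow: its global existence, the passage from non-precompactness to a genuine Palais--Smale sequence, and above all the bubble-tree decomposition with \emph{exact} energy quantization, resting on the half-space Liouville classification that identifies each blow-up profile and the residual weak limit as a nonnegative-energy solution of \eqref{mce}. The case $n=2$ must be treated separately throughout — the bubble expansion and the half-bubble energies then carry logarithmic corrections — but this changes none of the signs or energy inequalities used above.
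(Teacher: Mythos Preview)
This theorem is quoted from Ho's paper and is \emph{not} proved in the present article; there is no ``paper's own proof'' to compare against line by line. That said, the paper does reproduce the essential mechanism when proving its extension Theorem~\ref{main2} (and Theorem~\ref{main1}), citing \cite[Lemma~3.1]{ho} for the sub-critical initial datum and \cite[Proof of Theorem~1.2]{ho} for the final energy gap, so a meaningful comparison with that machinery is possible.

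Your overall strategy is correct and is precisely the one underlying both Ho's argument and the paper's: run the conformal mean curvature flow from a $G$-invariant bubble centred at $y\in\Sigma$ with energy strictly below $c_\ast=S(\max_\Sigma f)^{-(n-1)/n}$ (this is where $\Delta_{S^n}f(y)>0$ enters), preserve $G$-invariance along the flow, and show that every blow-up scenario forces the limiting energy to be at least $c_\ast$. The one genuine methodological difference is in how you exclude concentration off $\Sigma$. You invoke a full Struwe-type multi-bubble quantization and argue that an off-$\Sigma$ bubble drags along its $G$-orbit, giving $m\geqslant2$ bubbles and hence energy at least $2^{1/n}S(\max_{S^n}f)^{-(n-1)/n}>c_\ast$ by \eqref{sbc1}. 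The paper's route (Theorem~\ref{sharpversionofcc} and the proof of Theorem~\ref{main1}) is lighter: it first proves that under the simple-bubble bound there can be \emph{at most one} concentration point, via the $L^n$-mass estimate in Claim~1, and then observes that $G$-invariance of $u_k$ would force the image of that point under any nontrivial $\theta\in G$ to also be a concentration point --- impossible unless the point already lies in $\Sigma$. This avoids the full bubble-tree decomposition and the Liouville classification you flag as the ``main obstacle''; what you gain with your version is a cleaner, more modular statement that would transplant to settings where single-bubble concentration is not guaranteed a priori. Either route closes the argument, and once the concentration point is forced into $\Sigma$ your final step (one bubble at a point of $\Sigma$ already costs $\geqslant c_\ast$) matches the paper's energy contradiction exactly.
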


In this paper, inspired by the results in \cite{zh}, we aim to extend the Theorems \ref{xuzhang} and \ref{ho} to the case that the prescribed function $f$ is allowed to change sign. Our first result reads
\begin{theorem}
  \label{main}
  Suppose $f(x)$ is a smooth Morse function on $S^n$ satisfying the following conditions:

  \noindent {\upshape(i)}~~$\int_{S^n}f~d\mu_{S^n}>0;$

  \noindent{\upshape(ii)}~~$(\max_{S^n}|f|)\big/\big(\dashint_{S^n}f~d\mu_{S^n}\big)<2^{1/n}$;

  \noindent{\upshape(iii)}~~$|\nabla f|_{g_{S^n}}^2+(\Delta_{S^n}f)^2\neq0$;

   \noindent{\upshape(iv)}~~The following algebraic system has no non-trivial solutions
   \begin{equation}\label{Morseindex}
   m_0=1+k_0, m_i=k_{i-1}+k_i, 1\leqslant i\leqslant n, k_n=0,
   \end{equation}
   ~~~~~~~~~~~~~~\quad with coefficients $k_i\geqslant0$ and $m_i$ defined as
   \begin{equation}\label{Morseindex1}
   m_i=\#\{x\in S^n; f(x)>0, \nabla_{g_{S^n}}f(x)=0, \Delta_{g_{S^n}}f(x)<0,~~\mbox{ind}_f(x)=n-i\},
   \end{equation}
   where $\mbox{ind}_f(x)$ denotes the Morse index of $f$ at critical point $x$, then $f$ can be realized as the boundary mean curvature of some metric $g$ in the conformal class of $g_e$ on the unit ball, i.e., Eq.\eqref{mce} possesses a positive solution.
\end{theorem}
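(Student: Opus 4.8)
The plan is to produce a positive solution of \eqref{mce} by running a negative gradient flow — the \emph{conformal mean curvature flow} — for the energy functional naturally attached to \eqref{mce}, and then combining its long-time behaviour with a concentration/blow-up analysis and a Morse-theoretic counting argument, in the spirit of \cite{xz, zh}.

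\textbf{Step 1: the functional and the flow.} Write $M=B^{n+1}$, identify $u$ with its harmonic extension to $M$, and set $\|u\|^2=\int_M|\nabla u|^2\,dx+\tfrac{n-1}{2}\int_{S^n}u^2\,d\mu_{S^n}$, the natural Dirichlet energy for \eqref{mce}. On $\mathcal{A}=\{u\in\mathbf{S}_1^2(M):\int_{S^n}f\,|u|^{2n/(n-1)}\,d\mu_{S^n}>0\}$ — nonempty since $1\in\mathcal{A}$ by (i) — consider
$$J(u)=\frac{\|u\|^2}{\Big(\int_{S^n}f\,|u|^{2n/(n-1)}\,d\mu_{S^n}\Big)^{(n-1)/n}},$$
whose positive critical points, after rescaling, solve \eqref{mce}. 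For $g(t)=u(t)^{4/(n-1)}g_e$ with boundary mean curvature $h_{g(t)}$, I would run the flow $\partial_t g=-(h_{g(t)}-\alpha(t)f)\,g$ on $S^n$, $\alpha(t)$ being the Lagrange multiplier that keeps $\int_{S^n}f\,d\mu_{g(t)}$ fixed; rewritten for $u$ this is a parabolic equation with a Dirichlet-to-Neumann term, so short-time existence, positivity and interior harmonicity follow from standard parabolic theory, and $J(u(t))$ is non-increasing along it. After normalising, the flow is the negative $L^2$-gradient flow of $J$ on a fixed constraint manifold.

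\textbf{Step 2: long-time existence, the dichotomy, and blow-up analysis.} From the monotonicity of $J$, the Sobolev trace inequality and the structure of the flow, show the flow is global in $t$ and that $\{u(t)\}_{t\to\infty}$ behaves as a Palais--Smale sequence for $J$: either (a) along some $t_k\to\infty$ the $u(t_k)$ converge strongly to a positive limit, which after rescaling solves \eqref{mce} — and we are done; or (b) $\max_{S^n}u(t)\to\infty$. In case (b) the heart of the matter is a blow-up analysis (the now-standard one, cf.\ \cite{xz}): energy quantisation together with the simple-blow-up hypothesis (ii) — the constant $2^{1/n}$ being exactly what rules out several bubbles and non-simple bubbles — forces concentration at a single point $x_\infty\in S^n$ along one standard bubble, with $f(x_\infty)>0$ and $\nabla_{g_{S^n}}f(x_\infty)=0$; by the non-degeneracy condition (iii) one gets $\Delta_{g_{S^n}}f(x_\infty)\neq0$, and a Pohozaev/balancing identity forces $\Delta_{g_{S^n}}f(x_\infty)<0$. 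Hence $x_\infty$ is one of the critical points counted in \eqref{Morseindex1}, and the same analysis identifies the ``critical point at infinity'' it carries, whose Morse index equals $n-\mathrm{ind}_f(x_\infty)$ — i.e.\ it sits at level $i$ with $\mathrm{ind}_f(x_\infty)=n-i$.

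\textbf{Step 3: the Morse-theoretic obstruction.} Suppose \eqref{mce} has no positive solution. Then alternative (a) never occurs, so no flow line converges, and by (iii) the only ``rest states'' are the finitely many, non-degenerate critical points at infinity of Step 2. Using the flow as a deformation and tracking how the topology of the sublevel sets $\{J\le c\}$ changes as $c$ crosses the associated critical values — passing the level of a point sitting at level $i$ attaches an $i$-cell — together with the contractibility of the constraint manifold (it retracts onto the positive cone, and, via a homotopy that uses (i), onto the constant function $1$), the Morse inequalities for the critical points at infinity read
$$\sum_{i=0}^{n}m_i\,t^i=1+(1+t)\sum_{i=0}^{n-1}k_i\,t^i,\qquad k_i\ge0,$$
that is, exactly the algebraic system \eqref{Morseindex}. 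By hypothesis (iv) it has no non-trivial solution in non-negative integers, a contradiction; therefore alternative (a) occurs and \eqref{mce} admits a positive solution.

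\textbf{Main obstacle.} The crux is Step 2: establishing, with the sharp constant in (ii), that blow-up of the flow degenerates to a single simple bubble located at a non-degenerate critical point of $f$ with $f>0$ and $\Delta_{g_{S^n}}f<0$, together with the computation of the Morse index of the resulting critical point at infinity. Step 3 also requires care, since one must prove the deformation lemmas and the cell-attachment picture for the flow in the absence of global Palais--Smale compactness.
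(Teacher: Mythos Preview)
Your proposal is correct and follows essentially the same route as the paper: run the conformal mean curvature flow, establish global existence and the convergence/blow-up dichotomy, localize the single bubble at a critical point of $f$ with $f>0$ and $\Delta_{S^n}f<0$ via the bound in (ii), and then derive the Morse polynomial identity \eqref{Morseindex} from the homotopy type of sublevel sets to reach a contradiction with (iv). The only place where the paper adds something you might underestimate is the contractibility of the top sublevel set $L_{\beta_0}$: because $f$ changes sign the contraction from \cite{xz} fails, and a new explicit homotopy (first flowing for a long time, then interpolating the $2^\#$-power toward the constant $1$ while controlling $E_f$) is needed --- this is precisely the ``homotopy that uses (i)'' you allude to, but it is the genuinely new ingredient here.
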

One will see that, later, the index counting condition \eqref{indexcounting} below is, indeed, a special case of the Morse index condition \eqref{Morseindex}. Hence, we have the following corollary

\begin{corollary}
  \label{cor}
  Suppose $f(x)$ is a smooth Morse function on $S^n$ satisfying the following conditions:

   \noindent {\upshape(i)}~~$\int_{S^n}f~d\mu_{S^n}>0;$

  \noindent{\upshape(ii)}~~$(\max_{S^n}|f|)\big/\big(\dashint_{S^n}f~d\mu_{S^n}\big)<2^{1/n}$;

  \noindent{\upshape(iii)}~~$|\nabla f|_{g_{S^n}}^2+(\Delta_{S^n}f)^2\neq0$;

  \noindent{\upshape(iv)}
   \begin{equation}\label{indexcounting}
   \sum_{\{x\in S^n: f(x)>0, \nabla_{S^n}f(x)=0~~\mbox{and}~~\Delta_{S^n}f(x)<0\}}(-1)^{ind_f(x)}\neq(-1)^n,
   \end{equation}
then $f$ can be realized as the boundary mean curvature of some metric $g$ in the conformal class of $g_e$ on the unit ball, i.e., Eq.\eqref{mce} possesses a positive solution.
\end{corollary}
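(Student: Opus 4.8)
The plan is to obtain Corollary \ref{cor} as an immediate consequence of Theorem \ref{main}. Since hypotheses (i), (ii) and (iii) are word-for-word the same in the two statements, the only thing to check is that the index-counting inequality \eqref{indexcounting} forces the algebraic system \eqref{Morseindex} to have no admissible solution, i.e.\ no solution in nonnegative integers $k_0,\dots,k_n$ with $k_n=0$. I would establish this by contraposition: a nonnegative integer solution of \eqref{Morseindex} would force equality with $(-1)^n$ on the right-hand side of \eqref{indexcounting}.

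Concretely, suppose $(k_0,\dots,k_n)$ with $k_i\geqslant 0$ and $k_n=0$ satisfies \eqref{Morseindex}. Multiplying the $i$-th relation by $(-1)^i$ and summing over $0\leqslant i\leqslant n$, the contributions of the $k_i$ telescope and one is left with
\[
\sum_{i=0}^{n}(-1)^i m_i \;=\; 1+(-1)^n k_n \;=\; 1 .
\]
On the other hand, by the definition \eqref{Morseindex1} the integer $m_i$ counts exactly those critical points $x$ of $f$ with $f(x)>0$, $\Delta_{g_{S^n}}f(x)<0$ and $\mathrm{ind}_f(x)=n-i$; for each such $x$ one has $(-1)^i=(-1)^{\,n-\mathrm{ind}_f(x)}=(-1)^n(-1)^{\mathrm{ind}_f(x)}$, so that
\[
\sum_{i=0}^{n}(-1)^i m_i \;=\; (-1)^n \!\!\!\sum_{\{x\in S^n:\, f(x)>0,\ \nabla_{S^n}f(x)=0,\ \Delta_{S^n}f(x)<0\}}\!\!\! (-1)^{\mathrm{ind}_f(x)} .
\]
Comparing the two displays gives $\sum (-1)^{\mathrm{ind}_f(x)}=(-1)^n$, which contradicts \eqref{indexcounting}. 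Hence, under \eqref{indexcounting}, the system \eqref{Morseindex} has no admissible solution and Theorem \ref{main} applies, producing a positive solution of \eqref{mce}.

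The argument is purely combinatorial, so I do not expect any genuine obstacle; the only point requiring care is to respect the index normalization in \eqref{Morseindex1} (the critical points contributing to $m_i$ have Morse index $n-i$, not $i$), since this is precisely what produces the global factor $(-1)^n$ and hence the exact form of the right-hand side of \eqref{indexcounting}. It is perhaps worth remarking in passing that the left-hand side of \eqref{indexcounting} is a restricted count which, if extended over all critical points of $f$, would equal $\chi(S^n)=1+(-1)^n$; thus \eqref{indexcounting} is a genuine and easily verifiable restriction on $f$, which is why the Corollary deserves a separate statement.
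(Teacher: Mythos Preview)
Your argument is correct and is essentially the paper's own approach. The paper observes that the polynomial identity $\sum_{i=0}^n s^i m_i = 1 + (1+s)\sum_{i=0}^n s^i k_i$ (which encodes the system \eqref{Morseindex}) evaluated at $s=-1$ yields $\sum_{i=0}^n(-1)^i m_i = 1$, which after the index shift $i\leftrightarrow n-\mathrm{ind}_f(x)$ is exactly the negation of \eqref{indexcounting}; your direct alternating summation of the relations in \eqref{Morseindex} is the same computation, and your contrapositive packaging (``a solution of \eqref{Morseindex} forces equality with $(-1)^n$, hence \eqref{indexcounting} rules out any solution and Theorem~\ref{main} applies'') is precisely what the paper means when it says that \eqref{indexcounting} is a special case of the Morse index condition.
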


For the case that the prescribed function $f$ possesses some kind of symmetry, we first consider a little more general situation. To do so, let us set up some notations first. We let $G$ be a subgroup of isometry group of $S^n$. Then, a function $f$ is $G-$invariant if
$$f(\theta(x))=f(x),\quad\mbox{for}~~\forall\theta\in G~~\mbox{and}~~\forall x\in S^n.$$
In addition, we define $\Sigma$ to be the fixed point set under the group $G$ as follow
$$\Sigma=\{x\in S^n: \theta(x)=x,\quad\mbox{for all}~~\theta\in G.\}.$$
our third result reads 
\begin{theorem}
  \label{main1}
 Let $G$ be a subgroup of isometry group of $S^n$. Assume that $f$ is a $G-$invariant function satisfying
  \begin{itemize}
   \item[(i)]
   $
    \int_{S^n}f~d\mu_{S^n}>0;
   $
   \item[(ii)]
    $(\max_{S^n}|f|)\big/\big(\dashint_{S^n}f~d\mu_{S^n}\big)<2^{1/n}$;
 \end{itemize}
 If there holds either
 \begin{itemize}
 \item[(a)] $\Sigma=\emptyset$ or\\ \vspace{-0.7em}
 \item[(b)] $\Sigma\neq\emptyset$ and $\max_{\Sigma}f\leqslant\dashint_{S^n}f~d\mu_{S^n}$,
 \end{itemize}
 then $f$ can be realized as the boundary mean curvature of some metric $g$ in the conformal class of $g_e$ on the unit ball, i.e., Eq.\eqref{mce} possesses a positive $G$-invariant solution.
\end{theorem}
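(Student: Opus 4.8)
We may assume $f$ is non-constant, since otherwise (i) forces $f$ to be a positive constant and \eqref{mce} is solved by a suitable constant. The plan is to run the $G$-invariant conformal mean curvature flow from constant data. Put $q=\tfrac{2n}{n-1}$, $Y(u)=\int_{B^{n+1}}|\nabla u|^{2}\,dx+\tfrac{n-1}{2}\int_{S^{n}}u^{2}\,d\mu_{S^{n}}$, and $\mathcal{E}(u)\definedas Y(u)\big/\big(\int_{S^{n}}f\,u^{q}\,d\mu_{S^{n}}\big)^{(n-1)/n}$ for $u>0$ with positive denominator. From the flow analysis of the previous sections, the flow can be normalized so that $\int_{S^{n}}f\,u^{q}\,d\mu_{S^{n}}\equiv1$ (the constraint set is non-empty by~(i)), is $G$-equivariant, keeps $Y$ (hence $\mathcal{E}$, which equals $Y$ along the normalized flow) non-increasing, and --- started from $G$-invariant data --- subconverges as $t\to\infty$ to a positive $G$-invariant function which, after the standard rescaling, solves \eqref{mce}, \emph{provided} $\mathcal{E}$ never reaches a non-compactness level. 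In the $G$-invariant class these levels are the numbers $m^{1/n}\,Y_{n}\,f(p)^{-(n-1)/n}$, where $p\in S^{n}$, $f(p)>0$, $m=|Gp|$ (read as $+\infty$ when the orbit $Gp$ is positive-dimensional), and $Y_{n}$ is the sharp Sobolev trace constant of $B^{n+1}$: such a level is produced by a configuration of $m$ mutually well-separated standard boundary bubbles, one at each point of $Gp$, where $f$ takes the common value $f(p)$. Since $Y_{n}$ is attained by the constants, $Y_{n}=\tfrac{n-1}{2}\vol(S^{n})^{1/n}$.

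Take $u_{0}\equiv c$, a $G$-invariant datum, with $c^{q}\int_{S^{n}}f\,d\mu_{S^{n}}=1$; then $Y(u_{0})=\tfrac{n-1}{2}c^{2}\vol(S^{n})$, and because $q\,\tfrac{n-1}{n}=2$ the $c$'s cancel in $\mathcal{E}$, so
\[
\mathcal{E}(u_{0})=\tfrac{n-1}{2}\,\vol(S^{n})\Big(\int_{S^{n}}f\,d\mu_{S^{n}}\Big)^{-(n-1)/n}=Y_{n}\Big(\dashint_{S^{n}}f\,d\mu_{S^{n}}\Big)^{-(n-1)/n}.
\]
If $p\notin\Sigma$ then $m=|Gp|\ge2$ (or $+\infty$), so the associated level is at least $2^{1/n}Y_{n}\big(\max_{S^{n}}f\big)^{-(n-1)/n}$; and by~(ii), $\max_{S^{n}}f\le\max_{S^{n}}|f|<2^{1/n}\dashint_{S^{n}}f\,d\mu_{S^{n}}$, hence $\max_{S^{n}}f\,/\,\dashint_{S^{n}}f\,d\mu_{S^{n}}<2^{1/n}<2^{1/(n-1)}$, which is exactly the inequality equivalent to that level exceeding $\mathcal{E}(u_{0})$. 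Therefore, in case~(a), where $\Sigma=\emptyset$, the only available non-compactness levels all exceed $\mathcal{E}(u_{0})\ge\mathcal{E}(u(t))$, no concentration occurs, and the flow subconverges to the desired $G$-invariant positive solution of \eqref{mce}.

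In case~(b) there is the extra possibility of a single bubble at some $p\in\Sigma$ with $f(p)>0$ ($m=1$), whose level is at least $Y_{n}\big(\max_{\Sigma}f\big)^{-(n-1)/n}$; hypothesis~(b), $\max_{\Sigma}f\le\dashint_{S^{n}}f\,d\mu_{S^{n}}$, makes this $\ge\mathcal{E}(u_{0})$. As $f$ is non-constant, $u_{0}$ is not stationary, so $\mathcal{E}$ \emph{strictly} decreases for $t>0$ and thus stays strictly below every non-compactness level; again no concentration occurs and the flow subconverges to a positive $G$-invariant solution of \eqref{mce}.

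The substance of the argument, which is supplied by the flow machinery of the earlier sections and here only invoked, is the blow-up analysis of this parabolic problem on a manifold with boundary: a Struwe-type profile decomposition of the flow (solution $=$ weak limit $+$ finitely many boundary bubbles $+$ error tending to zero in the appropriate norm), uniform in $t$; the propagation of $G$-invariance through this decomposition; and the energy quantization that forces an orbit of size $m$ to contribute its full multiplicity, which is exactly what produces the factor $2^{1/n}$ and makes hypothesis~(ii) sufficient. The use of $\max_{S^{n}}|f|$ rather than $\max_{S^{n}}f$ in~(ii) is what keeps the flow positive and the a priori estimates uniform over $\{f<0\}$. Granting all this, the proof of Theorem~\ref{main1} reduces to the elementary threshold comparisons above.
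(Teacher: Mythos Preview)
Your overall strategy---run the flow from the constant $G$-invariant datum $u_0\equiv 1$, use strict energy decay, and compare with the blow-up energy level---is exactly the paper's. In particular, your treatment of case~(b) (single bubble at a fixed point, ruled out by $\max_\Sigma f\le\dashint_{S^n}f$ together with strict decay since $f$ is non-constant) matches the paper's Case~2 essentially line for line.

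Where you diverge is case~(a). You invoke a $G$-equivariant Struwe-type profile decomposition with orbit-quantized levels $m^{1/n}Y_n f(p)^{-(n-1)/n}$, $m=|Gp|$, and say this is ``supplied by the flow machinery of the earlier sections.'' It is not: the paper never proves a multi-bubble decomposition or any orbit-multiplicity statement. What the earlier sections do prove (Theorem~\ref{sharpversionofcc}, Claim~1) is that hypothesis~(ii) already forces \emph{single}-point concentration---this is precisely where the threshold $2^{1/n}$ enters. The paper then argues case~(a) far more simply: since each $u_k$ is $G$-invariant, so is the measure $d\mu_{g_k}$; if $d\mu_{g_k}\rightharpoonup\omega_n\delta_Q$, then for any $\theta\in G$ one also has $d\mu_{g_k}\rightharpoonup\omega_n\delta_{\theta(Q)}$, forcing $\theta(Q)=Q$. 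Thus $Q\in\Sigma$, which is impossible when $\Sigma=\emptyset$, and in case~(b) reduces to the $Q\in\Sigma$ argument you already gave.

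So your proof is not wrong in spirit, but the machinery you appeal to is both stronger than what the paper establishes and unnecessary: the single-bubble result plus the elementary observation that a $G$-invariant Dirac mass must sit at a fixed point does the job.
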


Finally, with the help of Theorem \ref{main1}, we can extend Theorem \ref{ho} to be as follows
\begin{theorem}
 \label{main2}
 Assume that $f(x)$ is a smooth function on $S^n$ satisfying
 \begin{itemize}
  \item[(i)]$\int_{S^n}f~d\mu_{S^n}>0$, and
  \item[(ii)] $(\max_{S^n}|f|)\big/\big(\dashint_{S^n}f~d\mu_{S^n}\big)<2^{1/n}$.
 \end{itemize}
  Moreover, suppose that $f$ is invariant under the symmetry (Sym1) or (Sym2). If there exists a point $y\in\Sigma$ with $f(y)=\max_\Sigma f$ such that $\Delta_{S^n}f(y)>0$. Then Eq.\eqref{mce} possesses a positive smooth solution.
\end{theorem}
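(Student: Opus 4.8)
The plan is to derive Theorem~\ref{main2} from Theorem~\ref{main1} by combining a case distinction with a conformal deformation of $f$. Let $G$ be the finite subgroup of the isometry group of $S^n$ generated by the given symmetry (Sym1) or (Sym2); then $f$ is $G$-invariant, and its fixed point set $\Sigma$ is nonempty ($\Sigma=\mathscr{H}\cap S^n$ is an equatorial $(n-1)$-sphere under (Sym1), and $\Sigma=\{\mathrm N,\mathrm S\}$ under (Sym2)). Let $y\in\Sigma$ be the point with $f(y)=\max_\Sigma f$ and $\Delta_{S^n}f(y)>0$ furnished by the hypothesis. I first note that $y$ is automatically a critical point of $f$: the isotropy representation of $G$ on $T_yS^n$ has fixed subspace $T_y\Sigma$ (which is $\{0\}$ under (Sym2)), so $\nabla_{S^n}f(y)\in T_y\Sigma$, and since $y$ maximizes $f|_\Sigma$ the component of $\nabla_{S^n}f(y)$ tangent to $\Sigma$ vanishes as well; hence $\nabla_{S^n}f(y)=0$.

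If $\dashint_{S^n}f\,d\mu_{S^n}\geqslant\max_\Sigma f$, the proof is finished at once: $f$ satisfies hypotheses (i), (ii) and alternative (b) of Theorem~\ref{main1} for this $G$ (recall $\Sigma\neq\emptyset$), so \eqref{mce} has a positive $G$-invariant solution. Hence the remaining case is $\dashint_{S^n}f\,d\mu_{S^n}<\max_\Sigma f=f(y)$, which in particular forces $f(y)>\dashint_{S^n}f\,d\mu_{S^n}>0$.

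In that case I would deform $f$ by the standard one-parameter family $\{\delta_t\}_{t>0}$ of conformal transformations of $S^n$ fixing the antipodal pair $\{y,-y\}$ and concentrating at $y$ as $t\to\infty$ (in a stereographic chart based at $-y$, $\delta_t$ is the Euclidean dilation $x\mapsto x/t$, with $\delta_1=\mathrm{id}$). Since $-y\in\Sigma$, every $\theta\in G$ fixes both $y$ and $-y$, so in that chart $\theta$ acts linearly and therefore commutes with $x\mapsto x/t$; hence $\delta_t$ commutes with $G$, the function $f_t\definedas f\circ\delta_t$ is again $G$-invariant, and $\delta_t$ restricts to a bijection of $\Sigma$. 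Consequently $\max_\Sigma f_t=\max_\Sigma f=f(y)$ and $\max_{S^n}|f_t|=\max_{S^n}|f|$ for every $t>0$. The crux is the (by now classical) behavior of the mean value $\dashint_{S^n}f\circ\delta_t\,d\mu_{S^n}$ near the concentration limit: because $y$ is a critical point of $f$ with $\Delta_{S^n}f(y)>0$,
\[
\dashint_{S^n}f_t\,d\mu_{S^n}=f(y)+c_n(t)\,\Delta_{S^n}f(y)+o\big(c_n(t)\big)\qquad(t\to\infty),
\]
where $c_n(t)>0$ and $c_n(t)\to0$ (of order $t^{-2}$ for $n\geqslant3$, of order $t^{-2}\log t$ for $n=2$); in particular $\dashint_{S^n}f_t\,d\mu_{S^n}>f(y)$ for all large $t$, while $\dashint_{S^n}f_t\,d\mu_{S^n}\to f(y)>0$.

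Now fix $t$ large. Then $f_t$ satisfies the hypotheses of Theorem~\ref{main1} for the group $G$: (i) $\int_{S^n}f_t\,d\mu_{S^n}>0$; (ii) since $\dashint_{S^n}f_t\,d\mu_{S^n}\to f(y)=\max_\Sigma f$ and, by the case assumption, $\max_{S^n}|f_t|\big/\max_\Sigma f=\max_{S^n}|f|\big/\max_\Sigma f<\max_{S^n}|f|\big/\dashint_{S^n}f\,d\mu_{S^n}<2^{1/n}$, one gets $\max_{S^n}|f_t|\big/\dashint_{S^n}f_t\,d\mu_{S^n}<2^{1/n}$ once $t$ is large; and alternative (b) holds because $\max_\Sigma f_t=f(y)\leqslant\dashint_{S^n}f_t\,d\mu_{S^n}$. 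Theorem~\ref{main1} then yields a positive solution of \eqref{mce} with $f$ replaced by $f_t$. Finally, \eqref{mce} is conformally covariant, so applying the conformal diffeomorphism of $\overline{B^{n+1}}$ that restricts to $\delta_t^{-1}$ on $S^n$ transports this solution into a positive solution of \eqref{mce} for $f$ itself, which is smooth by standard elliptic regularity. I expect the displayed expansion to be the main obstacle --- extracting the sign of the correction in terms of $\Delta_{S^n}f(y)$ --- together with the minor nuisance that the model integral $\int_{\mathbb{R}^n}|w|^2(1+|w|^2)^{-n}\,dw$ converges only for $n\geqslant3$, so $n=2$ needs a parallel but slightly modified estimate; this expansion is precisely the step where $\Delta_{S^n}f(y)>0$ is used, and it explains why a comparison with $\max_\Sigma f$ rather than with the mean value is sufficient.
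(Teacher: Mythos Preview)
Your proposal is correct and takes a genuinely different route from the paper. Both arguments share the same first step (if $\max_\Sigma f\leqslant\dashint_{S^n}f\,d\mu_{S^n}$, Theorem~\ref{main1}(b) applies directly), and both ultimately rest on the same classical expansion---the one you display, which in the paper is hidden inside the cited \cite[Lemma~3.1]{ho}. The difference lies in how the second case $\max_\Sigma f>\dashint_{S^n}f\,d\mu_{S^n}$ is handled. The paper does \emph{not} reduce to Theorem~\ref{main1}: instead it selects, via \cite[Lemma~3.1]{ho}, a $G$-invariant initial datum $u_0\in X_f$ with $E_f[u_0]<(\max_\Sigma f)^{(1-n)/n}+\epsilon$, runs the flow \eqref{eeforu} from $u_0$, and argues by contradiction that the blow-up point $Q\in\Sigma$ would satisfy $f(Q)<f(y)-\alpha$ for some $\alpha>0$ (using $\Delta_{S^n}f(y)>0$ and \cite[Proof of Theorem 1.2]{ho}), whence $E_f[u_0]<f(Q)^{(1-n)/n}=\lim_t E_f[u(t)]$, violating the monotonicity of $E_f$.

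Your approach is cleaner in that it reduces Theorem~\ref{main2} entirely to Theorem~\ref{main1} plus the conformal covariance of \eqref{mce}, with no further recourse to the flow machinery; the price is that you must verify explicitly that $\delta_t$ commutes with $G$ and preserves $\Sigma$ (which you do, using $-y\in\Sigma$), and that the deformed $f_t$ still satisfies the pinching condition (ii), which follows from $\dashint f_t>f(y)>\dashint f$. The paper's approach, by contrast, stays with the original $f$ throughout and exploits the flow's energy decay one more time; it is perhaps more in the spirit of the rest of the paper but less self-contained. Your anticipated obstacle---the sign and order of the correction term, including the $n=2$ logarithm---is indeed the same analytic input as \cite[Lemma~3.1]{ho}, so neither argument avoids it.
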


The paper is organized as follows: In \S2, we describe the evolution equations and derive some elementary estimates; In \S3, we focus ourself on the global existence of our evolutio equations; In \S4, we try to perform the blow-up analysis and describe th asymptotic behavior of the flow in the case of divergence; In the final section \S5, we will prove the main results.

\section{The flow equation and some elementary estimates}
We consider, as in Xu \& Zhang \cite{xz}, the conformal mean curvature flow as follows. Let $g(t)$ be a family of time-dependent metrics on the unit ball $B^{n+1}$ conformal to $g_e$. For simplicity, we denote by, respectively, $R(t)$ and $H(t)$ the scalar curvature and boundary mean curvature of the metric $g(t)$. Then, the evolution equation reads as
\begin{equation}\label{mee}
  \left\{
  \begin{array}{l}
  \frac{\partial}{\partial t}g(t)=-(H(t)-\lambda(t) f)g(t)\quad\mbox{on}~~\partial B^{n+1},\\
  R(t)\equiv0\quad\mbox{in}~~B^{n+1},\\
  g(0)=g_0,
  \end{array}
  \right.
\end{equation}
where $\lambda(t)$ is to be determined later and $g_0$ is the initial metric conformal to $g_e$. We would like to point out that such evolution equation as above was firstly considered by Brendle \cite{br} where the prescribed function $f$ is constant.

Now, by substituting $g(t)=u(t)^{4/(n-1)}g_e$ and $g_0=u_0^{4/(n-1)}g_e$ into \eqref{mee}, we can obtain the evolution equation for the conformal factor $u(t)$
\begin{equation}\label{eeforu}
  \left\{
  \begin{array}{l}
  \Delta_eu=0\quad\mbox{in}~~B^{n+1},\\
   \frac{\partial}{\partial t}u(t)=-\frac{n-1}{4}(H(t)-\lambda(t) f)u(t)\quad\mbox{on}~~\partial B^{n+1},\\
  u(0)=u_0,
  \end{array}
  \right.
\end{equation}
where $H(t)$ can be written, in terms of $u$, as
\begin{equation}\label{eforH}
  H=u^{2^\#-1}\bigg(a_n\frac{\partial u}{\partial\eta_e}+u\bigg).
\end{equation}
The terms $\Delta_e$ and $\partial/\partial\eta_e$ above means, respectively, the Laplace-Beltrami operator and the outer normal derivative of the metric $g_E$.

It is well known that the prescribed mean curvature problem has the associated energy functional given by
\begin{equation}\label{eforE_f[u]}
  E_f[u]=\frac{E[u]}{(\dashint_{S^n}fu^{2^\#}~d\mu_{S^n})^{2^\#/2}},
\end{equation}
where $E[u]$ can be expressed as
$$E[u]=\frac{1}{\omega_n}\int_{B^{n+1}}a_n|\nabla u|^2_{g_e}~dV_{g_e}+\dashint_{S^n}u^2~d\mu_{S^n}.$$
Here, $\omega_n$ and $d\mu_{S^n}$ are, respectively, the volume and volume form of the unit $n$-sphere. Moreover, we make a convention here that the sign `$\dashint_{S^n}$' means $\frac{1}{\omega_n}\int_{S^n}$ from now on.

Observe that, if $\Delta_eu=0$, then we have, by the divergence theorem and \eqref{eforH}, that
\begin{eqnarray}\label{eforE[u]}
  E[u]&=&\dashint_{S^n}a_n\frac{\partial u}{\partial\eta_e}u+u^2~d\mu_{S^n}=\dashint_{S^n}Hu^{2^\#}~d\mu_{S^n}\nonumber\\
  &=&\dashint_{S^n}H~d\mu_g
\end{eqnarray}
Hence, $E[u]$ is nothing but the average of the mean curvature $H$ if the metric $g$ is scalar-flat.

For the sake of convenience, we choose the factor $\lambda(t)$ in such a way that the boundary volume of $B^{n+1}$ w.r.t. the metric $g$ is preserved during the evolution, that is,
\begin{eqnarray*}
  0&=&\frac{d}{dt}vol(S^n,g(t))=\frac{d}{dt}\int_{S^n}u^{2^\#}~d\mu_{S^n}\\
  &=&\frac{n}{2}\int_{S^n}\lambda(t)f-H~d\mu_g,
\end{eqnarray*}
Thus, the choice of $\lambda(t)$ would be
\begin{equation}\label{eforlambda}
  \lambda(t)=\frac{E[u]}{\dashint_{S^n}fu^{2^\#}~d\mu_{S^n}},
\end{equation}
where we have used \eqref{eforE[u]} in the calculation.

To end this section, we collect some useful formulae which have appeared in \cite{xz}. We will omit their detailed derivation.
\begin{lemma}\label{usefulformulae}
(i) The mean curvature satisfies the evolution equation
\begin{equation}
  \label{eeforH}
  (\lambda f-H)_t=-\frac12\frac{\partial}{\partial\eta}(\lambda f-H)+\frac12(\lambda f-H)H+\lambda^\prime f
\end{equation}
on $S^n$. Here, the function $\lambda f-H$ is extended to the interior of $B^{n+1}$ such that
$$\Delta_{g(t)}(\lambda f-H)=0,\quad\mbox{in}~~B^{n+1}.$$

\noindent (ii) Let $u$ be any positive smooth solution of the flow\eqref{eeforu}. Then one has
$$\frac{d}{dt}E_f[u]=-\frac{n-1}{2}\bigg(\dashint_{S^n}f~d\mu_{g}\bigg)^{-\frac{2}{2^\#}}\dashint_{S^n}(\lambda f-H)^2~d\mu_g.$$
In particular, the energy functional $E_f[u]$ is decay along the flow.
\end{lemma}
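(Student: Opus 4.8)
The plan is to establish both formulae via the same device: at an arbitrary time $t_0$, compare $g(t)$ with the one-parameter conformal family $g(t)=e^{2\varphi(t)}g(t_0)$, where $\varphi(t)=\frac{2}{n-1}\log\!\big(u(t)/u(t_0)\big)$, so that $\varphi(t_0)\equiv0$ and, by \eqref{eeforu}, $\dot\varphi(t_0)=\frac12(\lambda f-H)$ on $S^n$.

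For (i), I would start from the conformal transformation rule for the boundary mean curvature: if $g=e^{2\varphi}\bar g$ on $B^{n+1}$ then on $S^n$ one has $H_g=e^{-\varphi}\big(\bar H+\partial_{\bar\eta}\varphi\big)$, $\bar\eta$ being the outward $\bar g$-unit normal — this is just the conformal covariance of the boundary operator $\phi\mapsto a_n\partial_{\bar\eta}\phi+\bar H\phi$ written in the exponential gauge (equivalently \eqref{eforH}, with $a_n=\frac{2}{n-1}$). Differentiating this identity in $t$ at $t=t_0$, where $\bar g=g(t_0)$ and $\varphi=0$, gives $\partial_tH|_{t_0}=-\dot\varphi(t_0)\,H+\partial_\eta\dot\varphi(t_0)$, the normal derivative now taken with respect to $g(t_0)$. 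It then remains to see that $\dot\varphi(t_0)$ equals $\frac12(\lambda f-H)$ not only on $S^n$ but throughout $B^{n+1}$, with $\lambda f-H$ extended by $\Delta_{g(t)}(\lambda f-H)=0$ as in the statement: differentiating the constraint $\Delta_e u(t)\equiv0$ yields $\Delta_e\dot u(t_0)=0$, so by conformal covariance of the Laplacian on the scalar-flat background $g(t_0)$ the function $\dot\varphi(t_0)=\frac{2}{n-1}\dot u(t_0)/u(t_0)$ is $g(t_0)$-harmonic, and having the same boundary trace as the $g(t_0)$-harmonic function $\frac12(\lambda f-H)$ it coincides with it. Substituting and using $\partial_t(\lambda f)=\lambda'f$ gives $(\lambda f-H)_t=\lambda'f+\frac12(\lambda f-H)H-\frac12\partial_\eta(\lambda f-H)$; since $t_0$ was arbitrary, this is \eqref{eeforH}.

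For (ii), I would write $E_f[u]=E[u]\,J^{-2/2^\#}$ with $J:=\dashint_{S^n}fu^{2^\#}\,d\mu_{S^n}=\dashint_{S^n}f\,d\mu_g$, so that, using \eqref{eforlambda} in the form $E[u]=\lambda J$,
\[
\frac{d}{dt}E_f[u]=J^{-2/2^\#}\Big(\dot E-\tfrac{2}{2^\#}\,\lambda\,\dot J\Big).
\]
For $\dot E$ I would differentiate $E[u]=\frac{a_n}{\omega_n}\int_{B^{n+1}}|\nabla u|^2_{g_e}\,dV_{g_e}+\dashint_{S^n}u^2\,d\mu_{S^n}$, integrate the bulk term by parts using $\Delta_eu=0$, and then use $a_n\partial_{\eta_e}u+u=Hu^{2^\#-1}$ (cf. \eqref{mce}, \eqref{eforH}) together with $\dot u=\frac{n-1}{4}(\lambda f-H)u$ to get $\dot E=2\dashint_{S^n}\dot u\,(a_n\partial_{\eta_e}u+u)\,d\mu_{S^n}=\frac{n-1}{2}\dashint_{S^n}(\lambda f-H)H\,d\mu_g$; differentiating $J$ under the integral sign and using $2^\#\cdot\frac{n-1}{4}=\frac n2$ gives $\dot J=\frac n2\dashint_{S^n}f(\lambda f-H)\,d\mu_g$. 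Because $\frac{2}{2^\#}\cdot\frac n2=\frac{n-1}{2}$, the bracket above collapses to $\frac{n-1}{2}\dashint_{S^n}\big[(\lambda f-H)H-\lambda f(\lambda f-H)\big]\,d\mu_g=-\frac{n-1}{2}\dashint_{S^n}(\lambda f-H)^2\,d\mu_g$, which is the stated identity; its sign shows $E_f$ is non-increasing. (Alternatively, once (i) is in hand, one can differentiate $E[u]=\dashint_{S^n}H\,d\mu_g$ directly and discard $\int_{S^n}\partial_\eta(\lambda f-H)\,d\mu_g=\int_{B^{n+1}}\Delta_{g(t)}(\lambda f-H)\,dV_{g(t)}=0$.)

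I expect the only delicate point to be the interior step in (i): one must use that the family $g(t)=e^{2\varphi(t)}g(t_0)$ remains scalar-flat to force $\dot\varphi(t_0)$ to be \emph{harmonic}, rather than an arbitrary extension of its boundary trace, and then to match it with the harmonic extension fixed in the statement. Everything else is conformal calculus and bookkeeping of the constants $a_n=\frac{2}{n-1}$ and $2^\#=\frac{2n}{n-1}$; since these manipulations are essentially those carried out in \cite{xz}, in the write-up only the identities above need to be recorded.
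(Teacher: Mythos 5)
Your proof is correct, and it follows the standard direct computation (conformal transformation law for $H$ plus the conformal covariance of the Laplacian on scalar‑flat backgrounds for (i), and differentiation of $E$ and $J=\dashint_{S^n}fu^{2^\#}d\mu_{S^n}$ with an integration by parts for (ii)); the constants all check out. The paper itself omits the proof and simply cites \cite{xz}, where the same computation is carried out, so there is nothing to contrast.
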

\section{Global existence of the flow}
In this section, we will show that our flow is globally well-defined. To achieve this purpose, the critical step is to show that positive property of the quantity $\int_{S^n}fu^{2^\#}~d\mu_{S^n}$ will be preserved along the flow if we, initially, have $\int_{S^n}fu_0^{2^\#}~d\mu_{S^n}>0$.
\begin{lemma}
  \label{pos-property}
  Assume that $\int_{S^n}fu_0^{2^\#}~d\mu_{S^n}>0$ and $u$ is a smooth solution of the flow \eqref{eeforu} on $[0,T)$ for some $T>0$. Then one has
  $$\int_{S^n}fu^{2^\#}~d\mu_{S^n}>0,$$
  for all $t\in[0,T)$.
\end{lemma}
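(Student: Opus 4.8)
The plan is to argue by contradiction, using the energy monotonicity of Lemma~\ref{usefulformulae}(ii) against a uniform lower bound for $E[u]$ that comes from the conservation of boundary volume. The guiding observation is that if $\dashint_{S^n}fu^{2^\#}~d\mu_{S^n}$ were allowed to approach zero, then $E_f[u]$ — whose numerator $E[u]$ cannot degenerate — would blow up, which the monotonicity of $E_f[u]$ rules out.

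First I would isolate the two ingredients. By the very choice of $\lambda(t)$ in \eqref{eforlambda}, the boundary volume $V\definedas\int_{S^n}u^{2^\#}~d\mu_{S^n}$ is constant along the flow and equal to $\int_{S^n}u_0^{2^\#}~d\mu_{S^n}>0$. Since $2^\#>2$, Jensen's inequality for the convex function $s\mapsto s^{2^\#/2}$ gives $\dashint_{S^n}u^2~d\mu_{S^n}\geqslant\big(\dashint_{S^n}u^{2^\#}~d\mu_{S^n}\big)^{2/2^\#}=(V/\omega_n)^{2/2^\#}$. As $a_n>0$, the expression for $E[u]$ then yields the uniform bound
$$E[u(t)]\geqslant\dashint_{S^n}u^2~d\mu_{S^n}\geqslant (V/\omega_n)^{2/2^\#}\definedas c_0>0\qquad\text{for all }t\in[0,T).$$

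Next, set $h(t)\definedas\int_{S^n}fu^{2^\#}~d\mu_{S^n}$, which is continuous on $[0,T)$ with $h(0)>0$. Suppose $h$ were not positive on all of $[0,T)$, and let $t_0\definedas\sup\{t\in[0,T):h>0\text{ on }[0,t]\}$; by continuity $t_0\in(0,T)$, $h>0$ on $[0,t_0)$, and $h(t_0)=0$. On $[0,t_0)$ the denominator of $E_f[u]$ is positive, so $E_f[u_0]<\infty$ and Lemma~\ref{usefulformulae}(ii) applies, giving $E_f[u(t)]\leqslant E_f[u_0]$ there. On the other hand, using $\dashint_{S^n}fu^{2^\#}~d\mu_{S^n}=h(t)/\omega_n$ and the lower bound above,
$$E_f[u(t)]=\frac{E[u(t)]}{\big(h(t)/\omega_n\big)^{2^\#/2}}\geqslant\frac{c_0\,\omega_n^{2^\#/2}}{h(t)^{2^\#/2}}\longrightarrow+\infty\qquad\text{as }t\to t_0^-,$$
because $2^\#/2=n/(n-1)>0$ and $h(t)\to0^+$. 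This contradicts the upper bound, so no such $t_0$ exists and $h>0$ throughout $[0,T)$.

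The argument is essentially soft, the only quantitative input being the uniform positivity of $E[u]$; the crux is the structural remark that a degeneration of $\dashint_{S^n}fu^{2^\#}~d\mu_{S^n}$ forces $E_f[u]$ to blow up while it is supposed to decrease. The one place that needs care is the first-time bookkeeping: one must check that $t_0$ is interior (so that $u$ is still smooth there and the identity of Lemma~\ref{usefulformulae}(ii) is genuinely valid on $[0,t_0)$) and that $E_f[u_0]$ is finite, which is precisely the hypothesis $\int_{S^n}fu_0^{2^\#}~d\mu_{S^n}>0$.
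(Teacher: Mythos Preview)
Your overall strategy---bound $E[u]$ below uniformly, then use the monotonicity of $E_f[u]$ to prevent the denominator from vanishing---is exactly the paper's approach. However, the specific lower bound you claim for $E[u]$ is incorrect: Jensen's inequality goes the other way. Since $2^\#>2$, the map $s\mapsto s^{2^\#/2}$ is convex, so for the probability measure $\dashint_{S^n}$ one gets
\[
\Big(\dashint_{S^n}u^2~d\mu_{S^n}\Big)^{2^\#/2}\leqslant \dashint_{S^n}u^{2^\#}~d\mu_{S^n},
\]
i.e.\ $\dashint_{S^n}u^2~d\mu_{S^n}\leqslant\big(\dashint_{S^n}u^{2^\#}~d\mu_{S^n}\big)^{2/2^\#}$, which is the reverse of what you wrote. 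Equivalently, on a probability space the $L^p$ norms are \emph{increasing} in $p$, so $\|u\|_{L^2}\leqslant\|u\|_{L^{2^\#}}$. Dropping the gradient term in $E[u]$ therefore cannot give the bound you need.

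The correct replacement is the sharp trace Sobolev (Escobar) inequality, which uses the full energy:
\[
E[u]=\frac{1}{\omega_n}\int_{B^{n+1}}a_n|\nabla u|_{g_e}^2~dV_{g_e}+\dashint_{S^n}u^2~d\mu_{S^n}\ \geqslant\ \Big(\dashint_{S^n}u^{2^\#}~d\mu_{S^n}\Big)^{2/2^\#}.
\]
Combined with volume preservation this yields $E[u(t)]\geqslant\big(\dashint_{S^n}u_0^{2^\#}~d\mu_{S^n}\big)^{2/2^\#}=:c_0>0$, and from there your contradiction argument (or the paper's direct rearrangement) goes through verbatim.
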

\begin{proof}
  Since $\int_{S^n}fu_0^{2^\#}~d\mu_{S^n}>0$, it is easy to see, by the definition of $E_f[u]$, that $E_f[u_0]>0$. From the sharp trace Sobolev inequality and the boundary volume-preserving property of our flow that
  \begin{equation}\label{lowerboundofE[u]}
  E[u]\geqslant\bigg(\dashint_{S^n}u_0^{2^\#}~d\mu_{S^n}\bigg)^{2/2^\#}.
  \end{equation}
  On the other hand, it follows from Lemma \eqref{usefulformulae} (ii) that
  $$\frac{E[u]}{(\dashint_{S^n}fu(t)^{2^\#}~d\mu_{S^n})^{2/2^\#}}\leqslant E_f[u_0].$$
  Combining the two inequalities above gives
  \begin{equation}
    \label{poslowerbound}
    \dashint_{S^n}fu(t)^{2^\#}~d\mu_{S^n}\geqslant\frac{\dashint_{S^n}u_0^{2^\#}~d\mu_{S^n}}{(E_f[u_0])^{2^\#/2}}>0.
  \end{equation}
\end{proof}

With help of \eqref{poslowerbound}, we can, immediately, obtain the boundedness of $\lambda(t)$.
\begin{lemma}
  \label{bdoflambda}
  Along the flow \eqref{eeforu}, $\lambda(t)$ remains bounded. To be precise, we have
  $$\lambda_1\leqslant\lambda(t)\leqslant\lambda_2,$$
  where
  $$\lambda_1=\frac{1}{\max_{S^n}f}\bigg(\dashint_{S^n}u_0^{2^\#}~d\mu_{S^n}\bigg)^{-1/n}\quad\mbox{and}
  \quad\lambda_2=\frac{(E_f[u_0])^{n/(n-1)}}{(\dashint_{S^n}u_0^{2^\#}~d\mu_{S^n})^{1/n}}.$$
\end{lemma}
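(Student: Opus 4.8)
The plan is to read both bounds directly off the formula \eqref{eforlambda}, $\lambda(t)=E[u]\big/\dashint_{S^n}fu^{2^\#}\,d\mu_{S^n}$, by sandwiching the numerator and the denominator separately. Three facts already established do all the work: the sharp trace Sobolev inequality together with the boundary-volume preservation gives the lower bound \eqref{lowerboundofE[u]} on $E[u]$; the energy monotonicity of Lemma \ref{usefulformulae}(ii), which already underlies \eqref{poslowerbound}, yields both $E_f[u(t)]\leqslant E_f[u_0]$ and the positive lower bound \eqref{poslowerbound} on $\dashint_{S^n}fu^{2^\#}\,d\mu_{S^n}$; and the boundary-volume conservation $\dashint_{S^n}u(t)^{2^\#}\,d\mu_{S^n}=\dashint_{S^n}u_0^{2^\#}\,d\mu_{S^n}$ bounds the denominator from above via $\dashint_{S^n}fu^{2^\#}\,d\mu_{S^n}\leqslant(\max_{S^n}f)\dashint_{S^n}u_0^{2^\#}\,d\mu_{S^n}$. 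Note $\max_{S^n}f>0$, since otherwise $\int_{S^n}fu_0^{2^\#}\,d\mu_{S^n}\leqslant0$, against the standing assumption; hence $\lambda_1$ is well defined and positive.

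For the lower bound, combine \eqref{lowerboundofE[u]} with the denominator upper bound:
$$\lambda(t)\geqslant\frac{\big(\dashint_{S^n}u_0^{2^\#}\,d\mu_{S^n}\big)^{2/2^\#}}{(\max_{S^n}f)\dashint_{S^n}u_0^{2^\#}\,d\mu_{S^n}}=\frac{1}{\max_{S^n}f}\Big(\dashint_{S^n}u_0^{2^\#}\,d\mu_{S^n}\Big)^{2/2^\#-1},$$
and since $2^\#=2n/(n-1)$ gives $2/2^\#-1=-1/n$, the right-hand side is exactly $\lambda_1$.

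For the upper bound I would first trade $E[u]$ for $E_f[u]$: by the definition \eqref{eforE_f[u]} of $E_f$ (with the scale-invariant exponent $2/2^\#$, as used in the proof of Lemma \ref{pos-property}), $E[u]=E_f[u]\big(\dashint_{S^n}fu^{2^\#}\,d\mu_{S^n}\big)^{2/2^\#}$, whence
$$\lambda(t)=E_f[u]\,\Big(\dashint_{S^n}fu^{2^\#}\,d\mu_{S^n}\Big)^{2/2^\#-1}=E_f[u]\,\Big(\dashint_{S^n}fu^{2^\#}\,d\mu_{S^n}\Big)^{-1/n}.$$
Now insert $E_f[u]\leqslant E_f[u_0]$ and, because the exponent $-1/n$ is negative, the lower bound \eqref{poslowerbound} on $\dashint_{S^n}fu^{2^\#}\,d\mu_{S^n}$; using $2^\#/2=n/(n-1)$ and $(2^\#/2)\cdot(1/n)=1/(n-1)$, the right-hand side collapses to $(E_f[u_0])^{1+1/(n-1)}\big/\big(\dashint_{S^n}u_0^{2^\#}\,d\mu_{S^n}\big)^{1/n}=\lambda_2$.

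There is no genuine obstacle here: the lemma is a formal consequence of \eqref{lowerboundofE[u]}, \eqref{poslowerbound}, the monotonicity of $E_f$, and boundary-volume conservation, all of which are already in hand. The only point requiring care is the bookkeeping of the exponents $2^\#$, $2/2^\#$, $2^\#/2$ and the sign of $2/2^\#-1$, so that the constants come out in the stated closed form $\lambda_1,\lambda_2$.
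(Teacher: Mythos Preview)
Your proof is correct and follows essentially the same approach as the paper: the lower bound comes from \eqref{lowerboundofE[u]} together with $\dashint_{S^n}fu^{2^\#}\,d\mu_{S^n}\leqslant(\max_{S^n}f)\dashint_{S^n}u_0^{2^\#}\,d\mu_{S^n}$, and the upper bound from rewriting $\lambda(t)=E_f[u]\big(\dashint_{S^n}fu^{2^\#}\,d\mu_{S^n}\big)^{-1/n}$ and invoking the decay of $E_f$ and \eqref{poslowerbound}. Your exponent bookkeeping is accurate, and the extra observation that $\max_{S^n}f>0$ (so $\lambda_1$ is well defined) is a nice touch the paper leaves implicit.
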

\begin{proof}
  In view of \eqref{eforlambda} and \eqref{eforE_f[u]}, we may rewrite $\lambda(t)$ as
  $$\lambda(t)=E_f[u]\bigg(\dashint_{S^n}fu^{2^\#}~d\mu_{S^n}\bigg)^{-1/n}.$$
  Now, by using \eqref{poslowerbound} and the decay property of $E_f[u]$, we get
  \begin{eqnarray*}
    \lambda(t)&\leqslant& E_f[u_0]\bigg(\frac{1}{E_f[u_0]}\bigg)^{-1/(n-1)}\bigg(\dashint_{S^n}u_0^{2^\#}~d\mu_{S^n}\bigg)^{-1/n}\\
    &\leqslant&\frac{(E_f[u_0])^{n/(n-1)}}{(\dashint_{S^n}u_0^{2^\#}~d\mu_{S^n})^{1/n}}=\lambda_2.
  \end{eqnarray*}
  On the other hand, the boundary volume-preserving property and \eqref{lowerboundofE[u]} yields
  \begin{eqnarray*}
    \lambda(t)&=&\frac{E[u]}{\dashint_{S^n}fu^{2^\#}~d\mu_{S^n}}\geqslant\frac{(\dashint_{S^n}u_0^{2^\#}~d\mu_{S^n})^{2/2^\#}}
    {(\max_{S^n}f)\dashint_{S^n}u_0^{2^\#}~d\mu_{S^n}}\\
    &=&\frac{1}{\max_{S^n}f}\bigg(\dashint_{S^n}u_0^{2^\#}~d\mu_{S^n}\bigg)^{-1/n}=\lambda_1.
  \end{eqnarray*}
\end{proof}

To continue, we set
$$F_2(t)=\dashint_{S^n}(\lambda f-H)^2~d\mu_g.$$
Then we conclude that $\lambda^\prime(t)$ is bounded by $F_2(t)$.
\begin{lemma}
Let $u$ be a smooth solution of the flow \eqref{eeforu}. Then there holds
\begin{equation}
  \label{lbdaprime}
  \lambda^\prime(t)=-\bigg(\dashint_{S^n}f~d\mu_g\bigg)^{-1}\bigg[\frac{n-1}{2}\dashint_{S^n}(\lambda f-H)^2~d\mu_g+\frac12\dashint_{S^n}\lambda f(\lambda f-H)~d\mu_g\bigg].
\end{equation}
In particular,
    \begin{equation}\label{lbdaprimebound}
    |\lambda^\prime(t)|\leqslant C\Big(F_2(t)+\sqrt{F_2(t)}\Big),
    \end{equation}
    where $C>0$ is a universal constant.
\end{lemma}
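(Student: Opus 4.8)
The plan is to obtain the formula \eqref{lbdaprime} by differentiating in time the constraint that was used to pin down the Lagrange multiplier $\lambda(t)$, and then to read off the bound \eqref{lbdaprimebound} from the a priori estimates already established. Recall that $\lambda(t)$ was chosen in \eqref{eforlambda} precisely so that the boundary volume $\vol(S^n,g(t))$ is conserved along the flow; as in the computation preceding \eqref{eforlambda}, differentiating this conservation once is equivalent to
\[
  \dashint_{S^n}(\lambda f-H)~d\mu_g=0\qquad\text{for every }t.
\]
Since $\partial_t g=(\lambda f-H)g$ on the $n$-dimensional boundary $\partial B^{n+1}$ by \eqref{mee}, the induced volume form evolves by $\partial_t(d\mu_g)=\tfrac n2(\lambda f-H)\,d\mu_g$.

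Differentiating the displayed identity once more in $t$, and substituting the evolution equation \eqref{eeforH} for $\lambda f-H$, I would get
\[
  0=-\tfrac12\dashint_{S^n}\tfrac{\partial}{\partial\eta}(\lambda f-H)~d\mu_g+\tfrac12\dashint_{S^n}(\lambda f-H)H~d\mu_g+\lambda'(t)\dashint_{S^n}f~d\mu_g+\tfrac n2\dashint_{S^n}(\lambda f-H)^2~d\mu_g.
\]
The first term vanishes because $\lambda f-H$ is extended into $B^{n+1}$ so as to be $g(t)$-harmonic, so that $\int_{S^n}\tfrac{\partial}{\partial\eta}(\lambda f-H)\,d\mu_g=\int_{B^{n+1}}\Delta_{g(t)}(\lambda f-H)\,dV_g=0$ by the divergence theorem for $g(t)$. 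Rewriting $(\lambda f-H)H=\lambda f(\lambda f-H)-(\lambda f-H)^2$ in what remains and solving for $\lambda'(t)$ — legitimate since $\dashint_{S^n}f~d\mu_g=\dashint_{S^n}fu^{2^\#}~d\mu_{S^n}>0$ by Lemma \ref{pos-property} — produces exactly \eqref{lbdaprime}.

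For the quantitative bound \eqref{lbdaprimebound}, I would estimate the two terms on the right of \eqref{lbdaprime} separately. The denominator $\dashint_{S^n}f~d\mu_g$ is bounded below by a positive constant by \eqref{poslowerbound}, so its reciprocal is uniformly bounded. The first bracketed term is $\tfrac{n-1}{2}F_2(t)$. For the second, Lemma \ref{bdoflambda} gives $\lambda(t)\le\lambda_2$, and the boundary volume is conserved, so $\dashint_{S^n}\lambda^2 f^2~d\mu_g\le\lambda_2^2(\max_{S^n}|f|)^2\dashint_{S^n}d\mu_g\le C$; by Cauchy--Schwarz,
\[
  \Bigl|\dashint_{S^n}\lambda f(\lambda f-H)~d\mu_g\Bigr|\le\Bigl(\dashint_{S^n}\lambda^2 f^2~d\mu_g\Bigr)^{1/2}F_2(t)^{1/2}\le C\sqrt{F_2(t)}.
\]
Inserting these into \eqref{lbdaprime} yields $|\lambda'(t)|\le C\bigl(F_2(t)+\sqrt{F_2(t)}\bigr)$ for a universal constant $C$.

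The derivation is essentially bookkeeping, so I do not expect a genuine obstacle; the one place that requires care is the second differentiation of the constraint — pinning down the factor $\tfrac n2$ in $\partial_t(d\mu_g)$ and checking that the normal-derivative term really integrates to zero, which uses both that the extension of $\lambda f-H$ is $g(t)$-harmonic and that $\eta$ denotes the $g(t)$-outer normal in \eqref{eeforH}.
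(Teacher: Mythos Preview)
Your proposal is correct and matches the paper's approach: the paper simply says that \eqref{lbdaprime} ``follows from a direct computation'' and that \eqref{lbdaprimebound} ``follows from the H\"older's inequality, Lemma \ref{bdoflambda} and \eqref{poslowerbound},'' which is exactly what you carry out in detail. Your derivation of \eqref{lbdaprime} by differentiating the volume constraint a second time, eliminating the normal-derivative term via harmonicity, and rewriting $(\lambda f-H)H$ is a clean realization of that direct computation, and your Cauchy--Schwarz estimate is precisely the H\"older step the paper invokes.
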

\begin{proof}
  The proof of \eqref{lbdaprime} follows from a direct computation. As for \eqref{lbdaprimebound}, it follows from the H\"older's inequality, Lemma \ref{bdoflambda} and \eqref{poslowerbound}.
\end{proof}

To bound $\lambda^\prime(t)$, in view of the lemma above, it suffices to bound the quantity $F_2(t)$.
\begin{lemma}\label{upperboundofF_2}
  One can find a universal constant $C\geqslant0$ such that
  $$F_2(t)\leqslant C,$$
  for all $t\geqslant0$.
\end{lemma}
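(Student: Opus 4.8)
The plan is to show that $F_2(t) = \dashint_{S^n}(\lambda f - H)^2\,d\mu_g$ stays bounded by combining the monotonicity of the energy $E_f[u]$ with the evolution equation \eqref{eeforH} for $\lambda f - H$. Write $w = \lambda f - H$ on $S^n$, extended harmonically to $B^{n+1}$ with respect to $g(t)$. From Lemma \ref{usefulformulae}(ii) we already know $\int_0^\infty F_2(t)\,dt < \infty$, since $E_f[u]$ is monotone decreasing and bounded below (it is positive by Lemma \ref{pos-property}, which forces $\dashint_{S^n}fu^{2^\#}\,d\mu_{S^n} > 0$, hence $E_f[u] = E[u]/(\dashint fu^{2^\#})^{2^\#/2}$ is controlled). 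So $F_2$ is integrable in time; to upgrade this to a pointwise bound, I would estimate $\frac{d}{dt}F_2(t)$ and show it cannot blow up faster than allowed.

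The key step is to differentiate $F_2$ in time. Using $\partial_t(d\mu_g) = \frac{n}{2}(\lambda f - H)\,d\mu_g$ (from the flow equation $\partial_t g = -(H-\lambda f)g$ restricted to the boundary) together with \eqref{eeforH}, one obtains schematically
\begin{equation*}
  \frac{d}{dt}F_2(t) = -\dashint_{S^n} w\,\frac{\partial w}{\partial\eta}\,d\mu_g + \dashint_{S^n}\Big(1 + \tfrac{n}{2}\Big)w^2 H\,d\mu_g + 2\lambda'\dashint_{S^n} f w\,d\mu_g.
\end{equation*}
Since $w$ is $g(t)$-harmonic in $B^{n+1}$, integration by parts gives $\dashint_{S^n} w\,\partial_\eta w\,d\mu_g = \frac{1}{\omega_n}\int_{B^{n+1}}|\nabla w|_g^2 \geqslant 0$, so the first term has a favorable sign and can be dropped (or retained as a dissipation term). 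For the remaining terms I would bound $\lambda'$ via \eqref{lbdaprimebound}, bound $\lambda$ via Lemma \ref{bdoflambda}, and control $\dashint_{S^n}|f w|\,d\mu_g \leqslant C\sqrt{F_2}$ by Cauchy--Schwarz and the preserved boundary volume. The genuinely troublesome term is $\dashint_{S^n} w^2 H\,d\mu_g$, because $H$ is not a priori bounded: $H = \lambda f - w$, so this is a cubic term $\dashint_{S^n}(\lambda f - w)w^2\,d\mu_g$, whose dangerous piece is $-\dashint_{S^n} w^3\,d\mu_g$. Estimating this requires either a Sobolev/interpolation inequality on $S^n$ controlling the $L^3$ norm of $w$ by its $H^{1/2}$ norm (which in turn is controlled by the Dirichlet energy $\int_{B^{n+1}}|\nabla w|_g^2$ that appears with a good sign), or an $L^\infty$ bound on $w$ obtained separately. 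I expect the cleanest route is to absorb $\|w\|_{L^3}^3 \lesssim \|w\|_{H^{1/2}}^2 \|w\|_{L^\infty}$-type bounds, with the $H^{1/2}$ part matched against the dissipation term from the harmonic extension, leaving a differential inequality of the form $F_2' \leqslant C F_2 + C\sqrt{F_2} + (\text{integrable error})$ — but this linear-in-$F_2$ growth is not yet enough for a uniform bound, so one must genuinely exploit the time-integrability $\int_0^\infty F_2\,dt < \infty$.

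Therefore the decisive trick will be: from $F_2' \leqslant (\text{something integrable in } t)\cdot F_2 + (\text{integrable})$ one gets, via Gronwall, $F_2(t) \leqslant \big(F_2(0) + \int_0^t(\cdots)\big)\exp\big(\int_0^t(\cdots)\,ds\big)$, and the exponent is finite because it is controlled by $\int_0^\infty F_2 < \infty$ (the coefficient of $F_2$ in the differential inequality should itself be expressible through $F_2$ and $\sqrt{F_2}$, using $H = \lambda f - w$ and $\lambda$ bounded). Making the coefficient of the linear term integrable is exactly where the cubic term $\dashint w^3$ must be handled carefully — I anticipate that this is the main obstacle, and that it is overcome by the sharp trace Sobolev inequality (the same one used for \eqref{lowerboundofE[u]}) applied to the harmonic extension of $w$, turning the boundary $L^{2^\#}$ norm into the interior Dirichlet energy. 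Once $F_2(t) \leqslant C$ is established, \eqref{lbdaprimebound} immediately gives $|\lambda'(t)| \leqslant C$, and the bounds on $\lambda$, $\lambda'$, and $F_2$ feed into the global existence argument of \S3.
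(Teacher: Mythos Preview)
Your overall framework is right---differentiate $F_2$, exploit $\int_0^\infty F_2<\infty$---but the computation of $\tfrac{d}{dt}F_2$ contains a sign error that hides the decisive step. The volume-form contribution is $\tfrac{n}{2}\dashint w^3\,d\mu_g$, not $\tfrac{n}{2}\dashint w^2H\,d\mu_g$; combined with the $\dashint w^2H$ coming from $2ww_t$, and using $w^3=w^2(\lambda f-H)$, the correct identity is
\[
\frac{d}{dt}F_2=-\frac{1}{\omega_n}\int_{B^{n+1}}|\nabla w|_g^2\,dV_g-\frac{n-2}{2}\dashint_{S^n}Hw^2\,d\mu_g+\frac{n}{2}\lambda\dashint_{S^n}fw^2\,d\mu_g+2\lambda'\dashint_{S^n}fw\,d\mu_g.
\]
The coefficient of $\dashint Hw^2$ is $-(n-2)/2$, not $+(1+n/2)$. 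This sign is what makes the argument close: writing $-\tfrac{1}{\omega_n}\int|\nabla w|_g^2=-\tfrac{n-2}{2}\cdot\tfrac{1}{\omega_n}\int a_n|\nabla w|_g^2-\tfrac{1}{2\omega_n}\int a_n|\nabla w|_g^2$, the first piece pairs with $-\tfrac{n-2}{2}\dashint Hw^2$ to form $-\tfrac{n-2}{2}$ times the \emph{conformal} Sobolev trace energy $\tfrac{1}{\omega_n}\int a_n|\nabla w|_g^2+\dashint Hw^2$, which is $\geqslant(\dashint|w|^{2^\#})^{2/2^\#}\geqslant0$ by the sharp trace inequality in the metric $g$. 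Thus the ``dangerous'' term involving the unbounded $H$ is absorbed for free---no interpolation on $w^3$ is needed at all, and your proposed Gronwall scheme (which you yourself note does not close) is unnecessary.

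What remains is $F_2'\leqslant \tfrac{n}{2}\lambda\dashint fw^2+2\lambda'\dashint fw\leqslant CF_2(1+\sqrt{F_2})$, using Lemma~\ref{bdoflambda} and \eqref{lbdaprimebound}. The paper then finishes with a substitution you did not anticipate: set $\alpha(t)=\int_0^{F_2(t)}\tfrac{ds}{1+\sqrt{s}}$, so that $\alpha'(t)\leqslant CF_2(t)$; integrating and using $\int_0^\infty F_2\leqslant CE_f[u_0]$ bounds $\alpha(t)$, and since $\alpha\gtrsim\min(F_2,\sqrt{F_2})$ this bounds $F_2$. This ODE trick replaces your vague Gronwall plan and is what actually converts time-integrability of $F_2$ into a pointwise bound.
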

\begin{proof}
  From \eqref{eeforH} and \eqref{eeforu}, it follows that
  \begin{eqnarray*}
    \frac{d}{dt}F_2(t)&=&\frac{d}{dt}\dashint_{S^n}(\lambda f-H)^2~d\mu_g\\
      &=&2\dashint_{S^n}(\lambda f-H)\Big[-\frac12\frac{\partial}{\partial\eta}(\lambda f-H)+\frac12(\lambda f-H)H+\lambda^\prime f\Big]~d\mu_g\\
      &&+\frac{n}{2}\dashint_{S^n}(\lambda f-H)^3d\mu_g\\
      &=&-\frac{1}{\omega_n}\int_{B^{n+1}}|\nabla(\lambda  f-H)|_g^2~dV_g+\frac{2-n}{2}\dashint_{S^n}H(\lambda f-H)^2~d\mu_g\\
      &&+2\lambda^\prime\dashint_{S^n}f(\lambda f-H)~d\mu_g+\frac n2\dashint_{S^n}\lambda f(\lambda f-H)^2~d\mu_g\\
      &=&-\frac{n-2}{2}\bigg[\frac{1}{\omega_n}\int_{B^{n+1}}a_n|\nabla(\lambda f-H)|_g^2~dV_g+\dashint_{S^n}H(\lambda f-H)^2~d\mu_g\bigg]\\
      &&-\frac{1}{2\omega_n}\int_{B^{n+1}}a_n|\nabla(\lambda f-H)|_g^2~dV_g+2\lambda^\prime\dashint_{S^n}f(\lambda f-H)~d\mu_g\\
      &&+\frac n2\dashint_{S^n}\lambda f(\lambda f-H)^2~d\mu_g.
  \end{eqnarray*}
  Using the sharp Sobolev trace inequality, \eqref{lbdaprimebound} and H\"older's inequality, we obtain
  \begin{eqnarray*}
      \frac{d}{dt}F_2(t)&\leqslant&-\frac{(n-2)}{2}\bigg(\dashint_{S^n}|\lambda f-H|^{2^\#}~d\mu_g\bigg)^\frac{2}{2^\#}-\frac{1}{2\omega_n}\int_{B^{n+1}}a_n|\nabla(\lambda f-H)|_g^2~dV_g\\
      &&+2\lambda^\prime\dashint_{S^n}f(\lambda f-H)~d\mu_g+\frac n2\dashint_{S^n}\lambda f(\lambda f-H)^2~d\mu_g\\
      &\leqslant&CF_2(t)\Big(1+\sqrt{F_2(t)}\Big).
    \end{eqnarray*}
Set $$\alpha(t)=\int_0^{F_2(t)}\frac{1}{1+\sqrt{s}}~ds.$$
    Then,
    \begin{equation}\label{dalpha/dt}
    \frac{d\alpha}{dt}\leqslant CF_2(t).
    \end{equation}
    From Lemma \ref{usefulformulae} (ii) and the fact that $E_f[u]\geqslant0$, it follows that
    $$\int_0^tF_2(s)~ds\leqslant CE_f[u_0].$$
    Hence, by integrating \eqref{dalpha/dt} from $0$ to $t$ with $t>0$, we can get
    \begin{eqnarray}\label{upperboundofalpha}
      \alpha(t)\leqslant \alpha(0)+C\int_0^tF_2(s)~ds\leqslant F_2(0)+CE_f[u_0].
    \end{eqnarray}
    It is easy to see by the definition of $\alpha(t)$ that
    \begin{equation}
      \label{lowerboundofalpha}
      \alpha(t)\geqslant\frac{F_2(t)}{1+\sqrt{F_2(t)}}\geqslant
      \begin{cases}
        \frac{F_2(t)}{2}, & F_2(t)\leqslant 1,\\
        \frac{\sqrt{F_2(t)}}{2}, & F_2(t)>1.
      \end{cases}
    \end{equation}
    Combining \eqref{upperboundofalpha} and \eqref{lowerboundofalpha} yields the conclusion.
  \end{proof}
  Up to here, Lemma \ref{upperboundofF_2} and \eqref{lbdaprimebound} immediately imply the corollary
  \begin{corollary}
    \label{boundoflambda_t}
    There exists a universal constant $\Lambda_0>0$ such that
    $$
    |\lambda^\prime(t)|\leqslant\Lambda_0,
    $$
    for all $t\geqslant0$.
  \end{corollary}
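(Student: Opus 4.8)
The plan is to feed the uniform bound on $F_2(t)$ directly into the pointwise estimate \eqref{lbdaprimebound}, so the argument is essentially a one-line combination of facts already in hand. First I would recall from \eqref{lbdaprimebound} that there is a universal constant $C_1>0$ such that
\[
|\lambda'(t)|\leqslant C_1\Big(F_2(t)+\sqrt{F_2(t)}\Big)\qquad\text{for all }t\geqslant0.
\]
Next I would invoke Lemma \ref{upperboundofF_2}, which furnishes a universal constant $C_2\geqslant0$ with $F_2(t)\leqslant C_2$ for every $t\geqslant0$. Since $s\mapsto s+\sqrt{s}$ is increasing on $[0,\infty)$, substituting the latter bound into the former gives
\[
|\lambda'(t)|\leqslant C_1\big(C_2+\sqrt{C_2}\big)\qquad\text{for all }t\geqslant0.
\]
Setting $\Lambda_0\definedas C_1(C_2+\sqrt{C_2})$, which depends only on $n$, $f$ and $u_0$ through the universal constants $C_1, C_2$ and is therefore itself universal, we obtain $|\lambda'(t)|\leqslant\Lambda_0$ for all $t\geqslant0$, as claimed.

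In other words, there is no genuine obstacle remaining at this stage: all of the substance has already been placed in Lemma \ref{upperboundofF_2}, whose proof runs the Gr\"onwall-type estimate on the auxiliary function $\alpha(t)=\int_0^{F_2(t)}(1+\sqrt{s})^{-1}\,ds$, using the energy monotonicity of Lemma \ref{usefulformulae}(ii) to control $\int_0^t F_2(s)\,ds$ and the sharp Sobolev trace inequality to absorb the unfavourable terms in $\tfrac{d}{dt}F_2(t)$. The only point that deserves a moment's attention is checking that the constant produced by that argument is genuinely $t$-independent — which it is, because in \eqref{upperboundofalpha} both $F_2(0)$ and $E_f[u_0]$ are fixed quantities — so that the present corollary indeed yields an honest time-independent bound $\Lambda_0$, which is exactly what will be needed for the blow-up analysis of \S4.
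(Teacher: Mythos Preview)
Your proposal is correct and follows exactly the paper's approach: the paper simply states that the corollary is an immediate consequence of Lemma~\ref{upperboundofF_2} and \eqref{lbdaprimebound}, and your argument spells out precisely this combination. The extra paragraph recapitulating the mechanism behind Lemma~\ref{upperboundofF_2} is accurate but not needed for the corollary itself.
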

Now, with the help of the boundedness of $\lambda^\prime(t)$, we are able to show the mean curvature $H$ is uniformly bounded. Before doing so, we define
$$\gamma:=\min\Bigg\{\min_{S^n}H(0)-\lambda_2\max_{S^n} |f|,-\sqrt{\tfrac43(\lambda_2\max_{S^n} |f|)^2+\tfrac83\Lambda_0\max_{S^n} |f|}\Bigg\}$$
\begin{lemma}\label{lowerboundofH}
  The mean curvature function $H$ of $g$ satisfies
  $$H-\lambda(t)f\geqslant\gamma,$$
  for all $t\geqslant0$.
\end{lemma}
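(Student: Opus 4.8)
The plan is to run a parabolic maximum principle on the quantity $\lambda f-H$, whose evolution equation \eqref{eeforH} is already in hand and which, by Lemma \ref{usefulformulae}(i), is $g(t)$-harmonic in $B^{n+1}$ for each fixed $t$; equivalently, I will prove the upper bound $\lambda f-H\leqslant-\gamma$. The key preparatory move is to eliminate the term $\frac12(\lambda f-H)H$ in \eqref{eeforH}, which is not obviously signed before $H$ has been bounded, by substituting $H=\lambda f-(\lambda f-H)$; this converts \eqref{eeforH} into
\begin{equation*}
(\lambda f-H)_t=-\tfrac12\frac{\partial}{\partial\eta}(\lambda f-H)-\tfrac12(\lambda f-H)^2+\tfrac12\lambda f\,(\lambda f-H)+\lambda' f\qquad\text{on }S^n,
\end{equation*}
in which the quadratic term now carries a favorable sign.

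I would then set $M(t):=\max_{\overline{B^{n+1}}}(\lambda f-H)(\cdot,t)$. Since $\lambda f-H$ is harmonic in $B^{n+1}$ with respect to $g(t)$, this maximum is attained at a boundary point $x_t\in S^n$, where $\frac{\partial}{\partial\eta}(\lambda f-H)(x_t,t)\geqslant0$; and by the standard fact that the maximum of a smooth family is locally Lipschitz in $t$ with $M'(t)=(\lambda f-H)_t(x_t,t)$ at almost every $t$, the displayed equation yields
\begin{equation*}
M'(t)\leqslant-\tfrac12M(t)^2+\tfrac12\lambda(t)f(x_t)M(t)+\lambda'(t)f(x_t).
\end{equation*}
Invoking $|\lambda(t)|\leqslant\lambda_2$ (Lemma \ref{bdoflambda}), $|\lambda'(t)|\leqslant\Lambda_0$ (Corollary \ref{boundoflambda_t}) and $|f|\leqslant\max_{S^n}|f|$, and abbreviating $A:=\lambda_2\max_{S^n}|f|$, $B:=\Lambda_0\max_{S^n}|f|$, this reduces to the autonomous differential inequality $M'(t)\leqslant-\tfrac12M(t)^2+\tfrac12A|M(t)|+B$.

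Its right-hand side is strictly negative once $M>x_+:=\tfrac12\big(A+\sqrt{A^2+8B}\,\big)$, so a routine comparison argument for a scalar ODE gives $M(t)\leqslant\max\{M(0),x_+\}$ for all $t\geqslant0$. To conclude I would note that $M(0)=\max_{S^n}\big(\lambda(0)f-H(0)\big)\leqslant\lambda_2\max_{S^n}|f|-\min_{S^n}H(0)$, and that the elementary inequality $x_+\leqslant\sqrt{\tfrac43A^2+\tfrac83B}$ — which after squaring twice is just $\tfrac{16}{9}(A^2-B)^2\geqslant0$ — gives $\max\{M(0),x_+\}\leqslant-\gamma$. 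Hence $\lambda f-H\leqslant-\gamma$, i.e.\ $H-\lambda(t)f\geqslant\gamma$.

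The main (and essentially only) delicate point will be the rigorous justification of differentiating $M(t)$ (Hamilton's trick), hand in hand with the correct sign of the outer normal derivative of $\lambda f-H$ at a boundary maximum and the observation that an interior maximum is ruled out by harmonicity unless $\lambda f-H$ is constant — in which case the conclusion is immediate. Everything else is the bookkeeping of a one-dimensional ODE comparison and the AM--GM estimate above.
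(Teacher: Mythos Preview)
Your argument is correct and is essentially the same maximum--principle proof as the paper's, only packaged differently: the paper introduces the linear operator $\mathscr L=\partial_t+\tfrac12\partial_\eta+\tfrac12(\lambda f-\gamma)$, checks $\mathscr L(\lambda f-H+\gamma)\leqslant0$ on $S^n$ together with $\lambda f-\gamma\geqslant0$ and nonpositive initial data, and quotes the maximum principle, whereas you run Hamilton's trick on $M(t)=\max(\lambda f-H)$ and do an ODE comparison. The algebra is the same in disguise: the paper bounds $\tfrac12(\gamma-H)H\leqslant\tfrac18\gamma^2$ after keeping the quadratic in $H$, while you first substitute $H=\lambda f-(\lambda f-H)$ to extract the clean $-\tfrac12(\lambda f-H)^2$; either route leads to the identical threshold $\tfrac38\gamma^2\geqslant\tfrac12(\lambda_2\max|f|)^2+\Lambda_0\max|f|$, and your inequality $x_+\leqslant\sqrt{\tfrac43A^2+\tfrac83B}$ is exactly this condition rephrased.
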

  \begin{proof}
  Set
  \begin{equation*}
    \mathscr{L}=\left\{
    \begin{array}{l}
    \Delta_{g},~~\text{in}~~B^{n+1},\\
    \\
    \partial_t+\frac12\frac{\partial}{\partial\eta}+\frac12(\lambda f-\gamma),~~\text{on}~~S^n.
    \end{array}\right.
  \end{equation*}
  A simple calculation and our choice of $\gamma$ gives
  $$\mathscr{L}(\lambda f-H+\gamma)=0,$$
  in $B^{n+1}$ and on $S^n$
  \begin{eqnarray*}
    \mathscr{L}(\lambda f-H+\gamma)&=&\partial_t(\lambda f-H)+\frac12\frac{\partial}{\partial\eta}(\lambda f-H)\\
    &&+\frac12(\lambda f-\gamma)(\lambda f-H+\gamma)\\
    &=&\frac12(\lambda f)^2-\frac12\gamma^2+\frac12(\gamma-H)H+\lambda^\prime f\\
    &\le&\frac12(\lambda_2\max_{S^n} |f|)^2+\Lambda_0\max_{S^n} |f|-\frac38\gamma^2\le0.
  \end{eqnarray*}
  Moreover, it is easy to see that $\lambda f-\gamma\ge0$ and $(\lambda f-H+\gamma)(0)\le0$ due to our choice. Hence, we can apply the maximum principle to operator $\mathscr{L}$ to obtain $\lambda f-H+\gamma\le0$, which proves the assertion.
\end{proof}
Now, with the help of Lemmas \ref{pos-property}, \ref{bdoflambda} and \ref{lowerboundofH}, the proof of the global existence of the flow \eqref{eeforu} will be exactly the same as that in \cite{xz}. We thus omit the detail. Before we state this result, let us define
$$X_*=\Bigg\{0<u\in C^\infty\bigg(\overline{B^{n+1}}\bigg);
\int_{S^n}fu^{2^\#}~d\mu_{g_{S^n}}>0\Bigg\}.$$
\begin{proposition}\label{globalexistence}
  If the initial data $u_0\in X_*$, then the flow \eqref{eeforu} has a unique smooth solution which is well defined on $[0,+\infty)$.
\end{proposition}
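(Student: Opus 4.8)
The plan is to establish global existence by the standard continuation argument: short-time existence follows from the parabolic nature of the boundary evolution equation \eqref{eeforu} (it is a nonlinear parabolic equation on $S^n$ for $u|_{S^n}$ coupled to a harmonic extension into $B^{n+1}$), so a smooth solution exists on a maximal interval $[0,T_{\max})$, and it remains only to rule out $T_{\max}<\infty$ by obtaining uniform a priori bounds on $u$ up to $T_{\max}$ that allow the solution to be restarted. First I would record that the hypothesis $u_0\in X_*$ means $\int_{S^n}fu_0^{2^\#}\,d\mu_{S^n}>0$, so Lemma \ref{pos-property} applies and the denominator in $\lambda(t)$ and $E_f[u]$ stays positive and bounded below by \eqref{poslowerbound} on all of $[0,T_{\max})$; hence Lemma \ref{bdoflambda} gives $0<\lambda_1\leqslant\lambda(t)\leqslant\lambda_2$, Corollary \ref{boundoflambda_t} gives $|\lambda'(t)|\leqslant\Lambda_0$, and Lemma \ref{lowerboundofH} gives the lower bound $H-\lambda(t)f\geqslant\gamma$ — all with constants independent of $T_{\max}$.

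Next I would convert these into pointwise bounds on $u$. The boundary-volume-preserving normalization keeps $\dashint_{S^n}u^{2^\#}\,d\mu_{S^n}=\dashint_{S^n}u_0^{2^\#}\,d\mu_{S^n}$ fixed, and combined with the monotonicity of $E_f[u]$ and the trace Sobolev inequality this bounds the $H^1(B^{n+1})$ norm of $u$. The lower bound on $H-\lambda f$ translates, via \eqref{eforH}, into a differential inequality for $u$ on the boundary: since $u_t=-\frac{n-1}{4}(H-\lambda f)u$ and $H-\lambda f\geqslant\gamma$, we get $u_t\leqslant-\frac{n-1}{4}\gamma u$, which (as $\gamma<0$) only gives a one-sided growth control; the genuine upper bound for $u$ on the boundary, and then in the interior by the maximum principle for harmonic functions, comes from combining the fixed $L^{2^\#}$ mass with a Harnack-type inequality for positive harmonic functions on the ball together with the mean-curvature bound, exactly as in \cite{xz}. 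Once $u$ is bounded above and below by positive constants on $\overline{B^{n+1}}$ uniformly in $t<T_{\max}$, elliptic and parabolic Schauder estimates bootstrap to uniform $C^{k}$ bounds on every compact time interval, so the solution extends past $T_{\max}$, contradicting maximality; uniqueness follows from standard energy estimates for the parabolic boundary problem.

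The main obstacle, and the one the excerpt explicitly defers to \cite{xz}, is the uniform \emph{positive lower bound} on $u$: the upper bound and the $H^1$ estimate are comparatively soft, but preventing $u$ from degenerating to zero somewhere on the boundary (where the nonlinearity $fu^{2^\#-1}$ could otherwise become singular in the wrong direction, or the metric $g=u^{4/(n-1)}g_e$ could collapse) requires using the structure of the flow — specifically the bound on $H-\lambda f$ from Lemma \ref{lowerboundofH}, the fixed boundary volume, and a Harnack inequality — in an essential way. Since Lemmas \ref{pos-property}, \ref{bdoflambda}, and \ref{lowerboundofH} supply precisely the three ingredients that \cite{xz} used in the positive-$f$ case, and none of the remaining steps uses the sign of $f$, I would simply invoke the argument of \cite{xz} verbatim for this part, as the paper does.
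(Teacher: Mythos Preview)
Your proposal is correct and matches the paper's approach exactly: the paper itself gives no proof beyond remarking that, with Lemmas \ref{pos-property}, \ref{bdoflambda}, and \ref{lowerboundofH} in hand, the argument is identical to that in \cite{xz} and omits the details. Your outline of the continuation argument, the role of the three lemmas in supplying the sign-independent a priori bounds, and the deferral of the remaining Harnack/lower-bound step to \cite{xz} is precisely what the paper intends.
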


\section{Blow-up analysis}
From this section onward, we dealt with the convergence of the flow \eqref{eeforu}. To realize this goal, one has to bound the conformal factor $u$ uniformly. However, one, in general, can not obtain this uniform bound directly. Instead, we assume the contrary, that is, the flow \eqref{eeforu} is divergent. In this way, one can expect that the blow-up phenomenon will appear and the blow-up analysis has to come into play. As an initial step, we notice the following $L^p$ convergence which is one of the key ingredients.
\begin{proposition}\label{lp}
  For $0<p<+\infty$ there holds
  $$\int_{S^n}|\lambda(t)f-H|^p~d\mu_g\rightarrow0,\qquad\mbox{as}~~t\rightarrow+\infty.$$
\end{proposition}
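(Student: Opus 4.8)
The plan is to exploit the two facts already in hand: the monotonicity formula from Lemma~\ref{usefulformulae}~(ii), which gives that $\int_0^\infty F_2(t)\,dt<\infty$ (since $E_f[u]$ is bounded below by $0$ and is nonincreasing), and the uniform bound $F_2(t)\leqslant C$ from Lemma~\ref{upperboundofF_2}. The first fact forces $F_2(t)=\dashint_{S^n}(\lambda f-H)^2\,d\mu_g$ to fail to stay bounded away from $0$, i.e. $\liminf_{t\to\infty}F_2(t)=0$; the goal of upgrading this to a genuine limit $F_2(t)\to 0$, and then to $L^p$ convergence for all $0<p<\infty$, is the substance of the proof.

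First I would establish that $\lim_{t\to\infty}F_2(t)=0$, not merely $\liminf$. The standard device is to show $\frac{d}{dt}F_2(t)$ is bounded above (or that $t\mapsto F_2(t)$ is uniformly continuous, or satisfies a one-sided Lipschitz-type bound), so that an integrable nonnegative function with controlled growth must tend to $0$. In fact the computation in the proof of Lemma~\ref{upperboundofF_2} already yields $\frac{d}{dt}F_2(t)\leqslant CF_2(t)(1+\sqrt{F_2(t)})$, and combined with $F_2(t)\leqslant C$ this gives $\frac{d}{dt}F_2(t)\leqslant C'$ for a universal $C'$; an integrable nonnegative $C^1$ function whose derivative is bounded above must converge to $0$ as $t\to\infty$ (if $F_2(t_k)\geqslant\delta$ along a sequence $t_k\to\infty$, then $F_2\geqslant\delta/2$ on intervals of fixed length to the left of each $t_k$, contradicting integrability). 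This disposes of the case $p=2$.

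Next I would pass from $L^2$ to $L^p$ for all finite $p$. For $0<p\leqslant 2$ this is immediate from Hölder's inequality together with the uniformly bounded boundary volume $\dashint_{S^n}d\mu_g=\dashint_{S^n}u^{2^\#}\,d\mu_{S^n}$, which is constant along the flow. For $p>2$ one interpolates between $L^2$ and a higher Lebesgue norm: it suffices to have a uniform $L^q$ bound on $\lambda f-H$ for some $q>p$, and then $\|\lambda f-H\|_{L^p}\leqslant\|\lambda f-H\|_{L^2}^{\theta}\|\lambda f-H\|_{L^q}^{1-\theta}\to 0$. The needed higher-integrability bound on $\lambda f-H$ comes from Lemma~\ref{lowerboundofH} (which bounds $\lambda f-H$ from above by $-\gamma$, hence gives an $L^\infty$ bound from one side) together with the uniform bound on $F_2$ and the sharp Sobolev trace inequality applied to the harmonic extension of $\lambda f-H$, exactly as in the $\frac{d}{dt}F_2$ estimate; alternatively one simply invokes the uniform $C^2$ (or even $L^\infty$) bounds on $H$ and the pointwise bound on $\lambda f$ from Lemma~\ref{bdoflambda} and the boundedness of $f$. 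Either way, $\lambda f-H$ is uniformly bounded in some $L^q$ with $q$ as large as we like, and interpolation finishes the argument.

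The main obstacle is the first step: converting $\liminf F_2=0$ into $\lim F_2=0$. One must be careful that the differential inequality for $F_2$ is genuinely one-sided (an upper bound on $\frac{d}{dt}F_2$), which is what the computation in Lemma~\ref{upperboundofF_2} provides after using $F_2\leqslant C$ to absorb the $\sqrt{F_2}$ factor; the rest is a soft real-analysis lemma. A minor secondary point is ensuring all constants ($\lambda_1,\lambda_2$, the bound on $H$, the boundary volume) are genuinely time-independent, but these are all supplied by the lemmas of \S2--\S3, so no new work is required there.
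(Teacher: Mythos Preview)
The paper does not supply its own proof; it simply cites \cite[Lemma~3.2]{xz}. Your $p=2$ argument is correct and standard: integrability $\int_0^\infty F_2\,dt<\infty$ from Lemma~\ref{usefulformulae}(ii), combined with the one-sided bound $\tfrac{d}{dt}F_2\leqslant C'$ (from the inequality in the proof of Lemma~\ref{upperboundofF_2} together with $F_2\leqslant C$), forces $F_2(t)\to0$.

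The gap is in the passage to $p>2$. Neither source you propose for a uniform $L^q$ bound ($q>2$) on $\lambda f-H$ actually delivers one. The ``uniform $C^2$ (or even $L^\infty$) bounds on $H$'' you invoke are not established anywhere in the paper prior to this point: Lemma~\ref{lowerboundofH} gives only a \emph{lower} bound on $H$, and the global-existence argument (itself deferred to \cite{xz}) yields bounds on finite time intervals, not uniformly on $[0,\infty)$. Your other route---the one-sided $L^\infty$ bound plus $F_2\leqslant C$ plus the trace Sobolev inequality---also fails to close: the trace inequality for the harmonic extension of $\lambda f-H$ controls $F_{2^\#}^{2/2^\#}$ by $\tfrac{a_n}{\omega_n}\int_{B^{n+1}}|\nabla(\lambda f-H)|_g^2\,dV_g+\dashint_{S^n}H(\lambda f-H)^2\,d\mu_g$, and neither term on the right is bounded pointwise in $t$ by $F_2$ alone (the gradient term appears in the $\tfrac{d}{dt}F_2$ identity with a minus sign, so you control only its time integral, not its instantaneous value; the second term involves $H$, which is not bounded above). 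The argument in \cite{xz}, following Brendle and Schwetlick--Struwe, instead computes $\tfrac{d}{dt}F_p$ for general $p\geqslant2$, obtains a differential inequality of the same shape as in Lemma~\ref{upperboundofF_2}, and bootstraps: $F_2$ bounded $\Rightarrow F_{2^\#}$ bounded $\Rightarrow\cdots\Rightarrow F_p$ bounded for every finite $p$; once that is in hand, your interpolation against $F_2\to0$ finishes. Your skeleton is right, but the $F_p$ differential inequality and iteration are the missing ingredient.
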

\begin{proof}
The proof is the same as \cite[Lemma 3.2]{xz}. We omit it here.
\end{proof}

\subsection{Compactness-Concentration}
To derive the asymptotic behavior of the flow \eqref{eeforu} in the case of divergence, the following compactness-concentration theorem in \cite{xz} serves good for our purpose. One can find its proof in \cite[Lemma 4.1]{xz}. We remark that such a compactness-concentration theorem is an analogue of that by Schwetlick \& Struwe \cite{ss}. Before we state the theorem, let us set some notations. For $r>0, x_0\in S^n$, set
$$B_r^+(x_0)=\{x\in B^{n+1}:d_{g_e}(x,x_0)<r\}\quad\mbox{and}\quad\partial^\prime B_r^+(x_0)=\partial B^+_r(x_0)\cap S^n.$$
\begin{theorem}[Xu \& Zhang]\label{cc}
Let $g_k=u_k^{4/(n-1)}g_e$, where $0<u_k\in C^\infty(\overline{B^{n+1}},g_e), k\in\mathbb{N}$, be a sequence of conformal metrics on $B^{n+1}$ with $R_{g_k}\equiv0$ and $vol(S^n,g_k)=\omega_n$. Assume that the associated boundary mean curvature $H_{g_k}$ satisfies
\begin{equation}
  \label{integralboundofHk}
  \int_{S^n}H_{g_k}~d\mu_{g_k}\leqslant C_0,\qquad\int_{S^n}\bigg|H_{g_k}-
  \int_{S^n}H_{g_k}~d\mu_{g_k}\bigg|^p~d\mu_{g_k}\leqslant C_0
\end{equation}
for all $k$ and some $p>n$. Then, either\\
{\upshape(i)} the sequence $(u_k)_k$ is uniformly bounded in $W^{1,p}(S^n,g_{S^n})$; or\\
{\upshape(ii)} there exists a subsequence of $(u_k)_k$ and finitely many points $x_1, \dots, x_L\in S^n$ such that for any $r>0$ and any $l\in\{1, \dots, L\}$ there holds
\begin{equation}\label{concentration}
  \liminf_{k\rightarrow+\infty}\bigg(\int_{\partial^\prime B_r^+(x_l)}|H_{g_k}|^n~d\mu_{g_k}\bigg)^\frac1n\geqslant\omega_n^\frac1n.
\end{equation}
\end{theorem}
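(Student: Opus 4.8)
\emph{Overview and standing bounds.} The plan is to run the concentration--compactness scheme of Schwetlick \& Struwe \cite{ss}, transplanted to the boundary, whose engine is a local $\varepsilon$-regularity estimate in which the critical nonlinearity is absorbed by the sharp trace Sobolev inequality $E[v]\geqslant\big(\dashint_{S^n}|v|^{2^\#}\,d\mu_{S^n}\big)^{2/2^\#}$. Since $R_{g_k}\equiv0$, each $u_k$ is harmonic in $B^{n+1}$ and, on $S^n$, solves $a_n\partial u_k/\partial\eta_e+u_k=H_{g_k}u_k^{2^\#-1}$, i.e.\ the system \eqref{mce} with $f$ replaced by $H_{g_k}$. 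Testing this with $u_k$ and invoking \eqref{eforE[u]} gives $\omega_nE[u_k]=\int_{S^n}H_{g_k}\,d\mu_{g_k}\leqslant C_0$; together with the normalization $\dashint_{S^n}u_k^{2^\#}\,d\mu_{S^n}=1$ and the sharp trace inequality (which forces $E[u_k]\geqslant1$, hence $\omega_n\leqslant\int_{S^n}H_{g_k}\,d\mu_{g_k}\leqslant C_0$), this yields a uniform bound for $u_k$ in $W^{1,2}(B^{n+1})$, a uniform bound for $\overline H_k:=\int_{S^n}H_{g_k}\,d\mu_{g_k}$, and, inserting the second inequality of \eqref{integralboundofHk} together with H\"older, a uniform bound $\int_{S^n}|H_{g_k}|^n\,d\mu_{g_k}\leqslant C_1$. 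Passing to a subsequence, $u_k\rightharpoonup u_\infty$ in $W^{1,2}(B^{n+1})$ and a.e.\ on $S^n$, and $|H_{g_k}|^n\,d\mu_{g_k}\rightharpoonup\nu$ weakly-$*$.

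\emph{The concentration set and the reduction.} Define
$$\mathscr S=\Big\{x\in S^n:\ \liminf_{k\to\infty}\int_{\partial^\prime B_r^+(x)}|H_{g_k}|^n\,d\mu_{g_k}\geqslant\omega_n\ \text{for every }r>0\Big\}.$$
Since $r\mapsto\int_{\partial^\prime B_r^+(x)}|H_{g_k}|^n\,d\mu_{g_k}$ is nondecreasing, disjoint small balls around distinct points of $\mathscr S$ each carry asymptotic $|H_{g_k}|^n$-mass $\geqslant\omega_n$, while the total mass is $\leqslant C_1$; hence $\mathscr S$ is finite with $\#\mathscr S\leqslant C_1/\omega_n$. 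If $\mathscr S=\{x_1,\dots,x_L\}\neq\emptyset$, alternative (ii) holds, \eqref{concentration} being exactly the defining inequality of $\mathscr S$. So it suffices to show $\mathscr S=\emptyset\Rightarrow$ alternative (i); equivalently, if (i) fails, pass to a subsequence with $\|u_k\|_{W^{1,p}(S^n)}\to\infty$ and deduce $\mathscr S\neq\emptyset$ along it from the local estimate below.

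\emph{Local $\varepsilon$-regularity and patching.} The crux is: there are $\varepsilon_0>0$, $\theta\in(0,1)$, $r_0>0$ so that whenever $x_0\in S^n$, $0<r\leqslant r_0$ and $\int_{\partial^\prime B_r^+(x_0)}|H_{g_k}|^n\,d\mu_{g_k}\leqslant\omega_n-\varepsilon_0$, one has $\|u_k\|_{W^{1,p}(\partial^\prime B_{\theta r}^+(x_0),g_{S^n})}\leqslant C$ with $C$ depending only on $\varepsilon_0,r,C_0,C_1$. To prove it, fix $\varphi$ with $\varphi\equiv1$ on $B_{\theta r}^+(x_0)$ and $\mathrm{supp}\,\varphi\subset B_r^+(x_0)$, test the boundary equation against $\varphi^2u_k^{1+2s}$ for small $s>0$, and work with the harmonic extension $\widehat v_k$ of $(\varphi u_k^{1+s})|_{S^n}$, which by Step~1 differs from $\varphi$ times the extension of $u_k^{1+s}$ by a term bounded in $W^{1,2}(B^{n+1})$. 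Using $d\mu_{g_k}=u_k^{2^\#}\,d\mu_{S^n}$ and $2^\#=2n/(n-1)$, H\"older's inequality gives
$$\int_{S^n}\varphi^2u_k^{2^\#+2s}|H_{g_k}|\,d\mu_{S^n}\leqslant\Big(\int_{\mathrm{supp}\,\varphi}|H_{g_k}|^n\,d\mu_{g_k}\Big)^{1/n}\Big(\int_{S^n}(\varphi u_k^{1+s})^{2^\#}\,d\mu_{S^n}\Big)^{(n-1)/n},$$
and the last factor equals $\omega_n^{(n-1)/n}\big(\dashint_{S^n}\widehat v_k^{2^\#}\,d\mu_{S^n}\big)^{2/2^\#}\leqslant\omega_n^{(n-1)/n}E[\widehat v_k]$ by the sharp trace inequality; on the other hand the test-function identity, up to commutator and zeroth-order terms controlled by the $W^{1,2}$ bound, has left-hand side $\geqslant\tfrac{1+2s}{(1+s)^2}\,\omega_nE[\widehat v_k]$. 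Since $(\omega_n-\varepsilon_0)^{1/n}<\omega_n^{1/n}$ strictly and $\tfrac{1+2s}{(1+s)^2}\to1$ as $s\to0$, for $s$ small the critical term is absorbed into the left-hand side, giving a uniform bound on $E[\widehat v_k]$, i.e.\ a uniform $L^{2^\#(1+s)}(S^n)$ bound on $\varphi u_k$. A Brezis--Kato/Moser iteration then raises the exponent to some $q>n(2^\#-1)$, and elliptic estimates for the Dirichlet-to-Neumann problem $a_n\partial_\eta u_k+u_k=H_{g_k}u_k^{2^\#-1}$ (right-hand side in $L^{q/(2^\#-1)}$, with $H_{g_k}\in L^p$, $p>n$) yield the claimed $W^{1,p}$ bound. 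Granting this, if $\mathscr S=\emptyset$ then each $x\in S^n$ admits $r_x\leqslant r_0$ with $\liminf_k\int_{\partial^\prime B_{r_x}^+(x)}|H_{g_k}|^n\,d\mu_{g_k}<\omega_n$; cover $S^n$ by finitely many $\partial^\prime B_{\theta r_{x_i}}^+(x_i)$, pass to one subsequence along which $\int_{\partial^\prime B_{r_{x_i}}^+(x_i)}|H_{g_k}|^n\,d\mu_{g_k}\leqslant\omega_n-\varepsilon_0$ for every $i$, apply the local estimate on each, and patch to obtain a uniform $W^{1,p}(S^n)$ bound --- alternative (i).

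\emph{Main obstacle.} The delicate part is the local $\varepsilon$-regularity step: making the absorption rigorous in the nonlocal (harmonic-extension / Dirichlet-to-Neumann) framework --- controlling all terms produced by the cutoff and by the fact that $\varphi u_k^{1+s}$ is not harmonic --- and then pushing the Brezis--Kato iteration up to an exponent that feeds boundary elliptic regularity, keeping every constant independent of $k$. A secondary, purely bookkeeping point is arranging the finitely many subsequence extractions so that a single subsequence serves the whole finite cover of $S^n$.
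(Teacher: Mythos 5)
The paper does not prove this theorem at all: it is quoted verbatim from Xu--Zhang and the text simply refers the reader to \cite[Lemma 4.1]{xz} for the proof, remarking only that it is the boundary analogue of the Schwetlick--Struwe compactness--concentration result. Your sketch follows exactly that route --- uniform $W^{1,2}$ and $L^n(d\mu_{g_k})$ bounds from \eqref{integralboundofHk} and the normalization, a finite concentration set whose defining inequality is \eqref{concentration}, and a local $\varepsilon$-regularity estimate (with the threshold $\omega_n$ correctly matched to the sharp trace Sobolev constant) plus a covering argument to force alternative (i) when no concentration occurs --- so it is essentially the same approach as the cited proof, with the genuinely technical points (cutoff/commutator control in the harmonic-extension framework, the Moser iteration, the simultaneous subsequence extraction) correctly identified as the remaining work.
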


To go further, let us define the so-called normalized metric and function. It is well known that for every smoothly varying family of metrics $g(t)=u(t)^{4/(n-1)}g_e$, there exists a family of conformal transformations $\phi(t):(B^{n+1},S^n)\mapsto(B^{n+1},S^n)$ which taken $S^n$ into itself such that
\begin{equation}
  \label{normalizedcondition}
  \int_{S^n}x~d\mu_h=0,\qquad\mbox{for}~~t\geqslant0,
\end{equation}
where $h=\phi^*g$ and $x=(x^1, x^2, \dots, x^{n+1})$. Here the pull-back metric $h$ is called the normalized metric such that the scalar curvature $R_h\equiv0$ in $B^{n+1}$ and the boundary mean curvature is given by $H_h=H\circ\phi$. In fact, $h$ can be expressed as $h=\varv^{4/(n-1)}g_e$ with
\begin{equation}
  \label{normalizedfunction}
  \varv=(u\circ\phi)|\det(d\phi)|^\frac{n-1}{2(n+1)},
\end{equation}
which satisfies the equation
\begin{equation}\label{eforv}
\left\{
\begin{array}{ll}
\Delta_{g_e}\varv=0&\mbox{in}~~B^{n+1}\\
a_n\frac{\partial\varv}{\partial\eta_e}+\varv=H_h\varv^{2^\#-1}&\mbox{on}~~S^n
\end{array}
\right.
\end{equation}
Since we only focus ourselves on the boundary most of time, it will be helpful to specify the formula of the conformal transformation restricted on the boundary. As a matter of fact, it is well-known that their restriction on the boundary can be written as
$$\varphi(t):=\phi(t)|_{S^n}=\pi^{-1}\circ\delta_{q(t),r(t)}\circ\pi,$$
where $\pi:S^n\backslash\{0, 0, \dots, 0, -1\}\mapsto \mathbb{R}^n$ is the stereographic projection from the south pole to $n$-plane, $\delta_{q,r}(z)=q+rz$ for $q\in\mathbb{R}^n, r>0$ and $z\in\mathbb{R}^n$.
 In consequence, we always use $\varphi$ as the conformal transformation without specific explanation. By following the idea in \cite{ms}, for $t_0\geqslant0$ fixed and $t\geqslant0$ close to $t_0$, we let
$$\varphi_{t_0}(t)=\varphi(t_0)^{-1}\varphi(t).$$
Then
$$\varphi_{t_0}(t)\circ\psi=\psi_{q(t),r(t)},$$
where $\psi=\pi^{-1}$ and $\psi_{q,r}=\psi\circ\delta_{q,r}$. This implies that at $t=t_0$, $\delta_{q(t_0),r(t_0)}(z)=z$. Hence, $q(t_0)=0, r(t_0)=1$. It is equivalent to saying that we make a translation and a dilation such that $q(t_0)=0, r(t_0)=1$ at each fixed $t_0$.

Now, given $t_0\geqslant0$, we consider a rotation mapping some $p=p(t_0)\in S^n$ into the north pole $N=(0, 0, \dots, 1)$. Then $\varphi(t_0)$ can be expressed as $\varphi(t_0)=\psi_\epsilon\circ\pi$ for some $\epsilon=\epsilon(t_0)>0$, where $\psi_\epsilon(z)=\psi(\epsilon z)=\psi_{0,\epsilon}(z)$ by the notation above. Hence, in stereographic coordinates, $\varphi(t):=\varphi_{p(t),\epsilon(t)}$ is given by
$$\varphi(t)\circ\psi
=\varphi(t_0)\circ\varphi_{t_0}(t)\circ\psi=\psi_\epsilon\circ\delta_{q,r}.$$
So, our proceeding calculations, involving any transformation, are always at each fixed time $t_0$. In this way, the conformal transformation $\varphi$ has the expression: $\varphi(t)=\psi_\epsilon\circ\pi$.

Now, we can obtain a considerably sharpen version of the previous result if we make some additional assumptions on the associated mean curvature $H_k$. We should point out that the proof of the following theorem is inspired by Struwe \cite{str}.
\begin{theorem}\label{sharpversionofcc}
 Let $(u_k)_k$ be a sequence of smooth functions on $\overline{B^{n+1}}$ with associated scalar-flat metrics $g_k=u_k^{4/(n-1)}g_e$ and the associated mean curvatures $H_k$, $k\in\mathbb{N}$. Assume that $\mbox{vol}(S^n,g_k)=\omega_n$ and there exists a smooth function $H_\infty$ on $S^n$ satisfying
 \begin{equation}
  \label{boundofH_infty}
  \max_{S^n}|H_\infty|\leqslant\tau<2^{\frac1n},
 \end{equation}
for some positive number $\tau$, such that
\begin{equation}
 \label{lpconvergestoH_infty}
 ||H_k-H_\infty||_{L^p(S^n,g_k)}\rightarrow0\quad\mbox{as}~~k\rightarrow+\infty,
\end{equation}
for some $p>n$. In addition, suppose that there exists some constant $C_*$ such that
\begin{equation}\label{lowerboundofH_k}
 H_k\geqslant C_*.
\end{equation}
Also, let $h_k=\phi_k^*g_k=\varv_k^{4/(n-1)}g_e$ be the associated sequence of normalized metrics satisfying \eqref{normalizedcondition}. Here $\phi_k$ is the conformal transformation on the unit ball and its restriction on the boundary are given by $\varphi_k=\phi_k|_{S^n}=\varphi_{p_k,\epsilon_k}$ with $p_k=p(t_k)$ and $\epsilon_k=\epsilon(t_k)$. Then, up to a subsequence, either

 \noindent{\upshape(i)} $u_k$ is uniformly bounded in $W^{1,p}(S^n,g_{S^n})$. In addition, $u_k\rightarrow u_\infty$ in $W^{1,p}(S^n,g_{S^n})$ as $k\rightarrow+\infty$. If we let $g_\infty=u_\infty^{4/(n-1)}g_e$, then $g_\infty$ has mean curvature $H_\infty$; or
 
 \noindent{\upshape(ii)} there exists a unique point $Q\in S^n$ such that
 \begin{equation}
  \label{volumeconcentration}
  d\mu_{g_k}\rightarrow\omega_n\delta_Q,
 \end{equation}
weakly in the sense of measure $~~\mbox{as}~~k\rightarrow+\infty$. Moreover, in the latter case, we have {\upshape(a)} $H_\infty(Q)=1$, and {\upshape(b)} as $k\rightarrow+\infty$, there hold 
\begin{equation}
 \label{nfandctconverge}
 ||\varv_k-1||_{C^\alpha(S^n)}\rightarrow0, ||H_k-1||_{L^p(S^n,g_k)}\rightarrow0,~~\mbox{and}~~||\varphi_k-Q||_{L^2(S^n,g_{S^n})}\rightarrow0,
\end{equation}
Here $0<\alpha<1-n/p$.
\end{theorem}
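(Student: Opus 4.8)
\textbf{Proof proposal for Theorem \ref{sharpversionofcc}.} The plan is to start from the dichotomy provided by Theorem \ref{cc} and upgrade each alternative using the quantitative hypotheses \eqref{boundofH_infty}, \eqref{lpconvergestoH_infty}, \eqref{lowerboundofH_k}. First I would check that the hypothesis \eqref{integralboundofHk} of Theorem \ref{cc} is met: the $L^p$ convergence \eqref{lpconvergestoH_infty} together with $\mbox{vol}(S^n,g_k)=\omega_n$ gives a uniform bound on $\int_{S^n}|H_k|^p\,d\mu_{g_k}$ and hence on $\int_{S^n}H_k\,d\mu_{g_k}$, so indeed exactly one of the two alternatives of Theorem \ref{cc} holds. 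In case (i) of Theorem \ref{cc}, $u_k$ is bounded in $W^{1,p}(S^n,g_{S^n})$; using the equation \eqref{eforv}-type relation (the boundary equation in \eqref{mce} with $f$ replaced by $H_k$), elliptic regularity for the harmonic extension, and the compact Sobolev embedding $W^{1,p}\hookrightarrow C^{\alpha}$ for $p>n$, I would extract a subsequence converging in $C^\alpha(S^n)$, hence (after bootstrapping, using $H_k\to H_\infty$ in $L^p$) to a positive limit $u_\infty$ solving the boundary equation with mean curvature $H_\infty$; positivity of $u_\infty$ follows from the lower bound \eqref{lowerboundofH_k} and the strong maximum principle / Harnack inequality. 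That is alternative (i) of the theorem.

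Now suppose case (ii) of Theorem \ref{cc} holds, so there are concentration points $x_1,\dots,x_L$ with $\liminf_k(\int_{\partial' B_r^+(x_l)}|H_{g_k}|^n\,d\mu_{g_k})^{1/n}\geqslant\omega_n^{1/n}$ for all $r>0$. The key numerical input is \eqref{boundofH_infty}: since $\|H_k-H_\infty\|_{L^p(S^n,g_k)}\to0$ with $p>n$ and $\mbox{vol}(S^n,g_k)=\omega_n$, Hölder gives $\|H_k-H_\infty\|_{L^n(S^n,g_k)}\to0$, so for $k$ large $\int_{\partial' B_r^+(x_l)}|H_k|^n\,d\mu_{g_k}\leqslant\int_{\partial' B_r^+(x_l)}|H_\infty|^n\,d\mu_{g_k}+o(1)\leqslant\tau^n\,\mu_{g_k}(\partial' B_r^+(x_l))+o(1)$. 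Hence $\liminf_k\mu_{g_k}(\partial' B_r^+(x_l))\geqslant\omega_n/\tau^n$ for every $r>0$; letting $r\to0$ this forces a Dirac mass of size $\geqslant\omega_n/\tau^n>\omega_n/2$ to sit at each $x_l$. Since the total mass is $\mbox{vol}(S^n,g_k)=\omega_n$, at most one such atom can occur, i.e.\ $L=1$; call the point $Q=x_1$. Moreover the complement of any neighbourhood of $Q$ carries asymptotically vanishing mass, so $d\mu_{g_k}\rightharpoonup c\,\delta_Q$ with $c\leqslant\omega_n$, and the first alternative of Theorem \ref{cc} being excluded forces $c=\omega_n$, giving \eqref{volumeconcentration}.

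It remains to establish (a) $H_\infty(Q)=1$ and (b) the three convergences in \eqref{nfandctconverge}. For this I would pass to the normalized metrics $h_k=\phi_k^*g_k=\varv_k^{4/(n-1)}g_e$, which satisfy \eqref{eforv} with boundary mean curvature $H_{h_k}=H_k\circ\varphi_k$ and the balancing condition \eqref{normalizedcondition}. The volume concentration \eqref{volumeconcentration} at the single point $Q$ means precisely that the Möbius parameters degenerate, $\epsilon_k\to0$ and $p_k\to Q$; as in \cite[Lemma 4.1]{xz} or \cite{str}, the balancing condition \eqref{normalizedcondition} then guarantees that $h_k$ does \emph{not} concentrate — the normalized volumes $d\mu_{h_k}$ stay spread out — so the sequence $\varv_k$ falls into alternative (i) of Theorem \ref{cc} applied to $(h_k,H_{h_k})$: $\varv_k$ is bounded in $W^{1,p}(S^n)$ and, after a subsequence, $\varv_k\to\varv_\infty$ in $C^\alpha(S^n)$ with $\varv_\infty$ solving \eqref{eforv} for some limit mean curvature. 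Now I would identify that limit: the change of variables formula shows $H_{h_k}=H_k\circ\varphi_k\to H_\infty(Q)$ pointwise a.e.\ (the conformal maps $\varphi_k$ push almost all of $S^n$ toward $Q$), hence in $L^p$, so $\varv_\infty$ solves \eqref{eforv} with constant mean curvature $H_\infty(Q)$; but a positive solution of \eqref{eforv} with constant mean curvature on $S^n$, normalized by \eqref{normalizedcondition}, must be $\varv_\infty\equiv1$ with that constant equal to $1$ (this is the equality case of the trace Sobolev inequality together with \eqref{normalizedcondition} eliminating the bubble family). This yields simultaneously $H_\infty(Q)=1$ and $\|\varv_k-1\|_{C^\alpha(S^n)}\to0$; then $\|H_k-1\|_{L^p(S^n,g_k)}\leqslant\|H_k-H_\infty\|_{L^p(S^n,g_k)}+\|H_\infty-1\|_{L^p(S^n,g_k)}$ and the second term tends to $0$ because $d\mu_{g_k}\rightharpoonup\omega_n\delta_Q$ and $H_\infty(Q)=1$ with $H_\infty$ continuous; finally $\|\varphi_k-Q\|_{L^2(S^n,g_{S^n})}\to0$ is a direct consequence of $\epsilon_k\to0$, $p_k\to Q$ by explicit estimate of $\psi_{\epsilon_k}\circ\pi$.

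The main obstacle is the last paragraph: making rigorous the claim that, once the original metrics concentrate at the single point $Q$, the \emph{normalized} metrics $h_k$ necessarily fall into the compact alternative and their limit is the round solution $\varv_\infty\equiv1$. This requires careful use of the balancing condition \eqref{normalizedcondition} to rule out residual concentration of $h_k$ at any point (a "bubble chasing" / center-of-mass argument in the spirit of Schwetlick--Struwe \cite{ss} and Struwe \cite{str}), and then the rigidity statement that a balanced positive solution of \eqref{eforv} with constant boundary mean curvature is trivial — which is where the sharp constant $2^{1/n}$ in \eqref{boundofH_infty} is genuinely used, since it is exactly what prevents a nontrivial amount of mass from escaping in the normalized picture. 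The rest is standard elliptic bootstrapping and measure-theoretic bookkeeping that I would only sketch.
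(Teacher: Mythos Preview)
Your overall strategy matches the paper's: apply the dichotomy of Theorem \ref{cc}, and in the concentration alternative use the balancing condition \eqref{normalizedcondition} to force the normalized sequence $(\varv_k)_k$ into the compact alternative, then invoke Escobar's uniqueness to pin down $\varv_\infty\equiv1$ and $H_\infty(Q)=1$. The ingredients you list are the right ones.

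There is, however, a genuine ordering gap in your concentration branch. From \eqref{concentration} and $|H_\infty|\leqslant\tau$ you correctly extract that each concentration point carries mass at least $\omega_n/\tau^n>\omega_n/2$, so $L=1$. But your next sentence, ``the complement of any neighbourhood of $Q$ carries asymptotically vanishing mass, so $d\mu_{g_k}\rightharpoonup c\,\delta_Q$ \dots\ and the first alternative of Theorem \ref{cc} being excluded forces $c=\omega_n$,'' is not justified: at this stage you only know the atom at $Q$ has mass $\geqslant\omega_n/\tau^n$, which for $\tau$ close to $2^{1/n}$ is barely above $\omega_n/2$; nothing yet prevents a positive diffuse part from surviving, and ``case (i) excluded'' only says $u_k$ is unbounded in $W^{1,p}$, which is perfectly compatible with partial concentration. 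Consequently your inference that $\epsilon_k\to0$ and $p_k\to Q$ from \eqref{volumeconcentration} is circular.

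The paper closes this gap by reversing the order. It first applies Theorem \ref{cc} to the \emph{normalized} sequence $(\varv_k)_k$ and uses the center-of-mass condition \eqref{normalizedcondition} (exactly as you anticipate in your ``main obstacle'' paragraph) to rule out concentration of $h_k$; this gives $C^{-1}\leqslant\varv_k\leqslant C$. Then, from $\varv_k$ bounded and $u_k$ unbounded, the relation \eqref{normalizedfunction} forces $\epsilon_k\to0$ (and $p_k\to Q_*$ along a subsequence, where $Q_*$ is not yet known to equal $Q$). With $\epsilon_k\to0$ one gets $H_{h_k}\to H_\infty(Q_*)$ in $L^p$, hence $\varv_\infty\equiv1$ and $H_\infty(Q_*)=1$ by Escobar's uniqueness, so $\|H_k-1\|_{L^p(S^n,g_k)}\to0$. \emph{Only now}, with the sharper information $H_k\to1$ rather than $H_k\to H_\infty$, does the concentration inequality \eqref{concentration} upgrade to the equality $\mu_{g_k}(\partial'B_r^+(Q))\to\omega_n$, yielding \eqref{volumeconcentration}; a final center-of-mass comparison then identifies $Q_*=Q$. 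If you reorder your argument along these lines it goes through.
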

\begin{proof}
 In order to apply Theorem \ref{cc}, we need to verify
 
 \noindent(a) $\int_{S^n}H_k~d\mu_{g_k}$ is bounded, and
 
 \noindent(b)$\int_{S^n}\Big|H_k-\int_{S^n}H_k~d\mu_{g_k}\Big|^p~d\mu_{g_k}$ is bounded.
 
 \noindent To see this, from \eqref{boundofH_infty} and \eqref{lpconvergestoH_infty} it follows that
 \begin{equation}\label{integralofH_kbounded}
 \bigg|\int_{S^n}H_k~d\mu_{g_k}\bigg|\leqslant\omega_n^\frac{p-1}{p}||H_k-H_\infty||_{L^p(S^n,g_k)}+\tau\omega_n\leqslant C.
 \end{equation}
and
\begin{eqnarray}\label{integralofH_kbounded1}
&&\bigg[\int_{S^n}\bigg|H_k-\int_{S^n}H_k~d\mu_{g_k}\bigg|^p~d\mu_{g_k}\bigg]^\frac1p \nonumber\\
&&\qquad\leqslant\bigg[\int_{S^n}\big|H_k-H_\infty\big|^p~d\mu_{g_k}\bigg]^\frac1p+\bigg[\int_{S^n}\bigg|H_\infty-\int_{S^n}H_\infty~d\mu_{g_k}\bigg|^p~d\mu_{g_k}\bigg]^\frac1p\nonumber\\
&&\qquad\quad+\bigg[\int_{S^n}\bigg|\int_{S^n}H_\infty~d\mu_{g_k}-\int_{S^n}H_k~d\mu_{g_k}\bigg|^p~d\mu_{g_k}\bigg]^\frac1p\leqslant C.
\end{eqnarray}
So, (a) and (b) hold and then all conditions in Theorem \ref{cc} are satisfied. Thus, we can apply Theorem \ref{cc} to the sequence $(u_k)_k$.

\noindent(i) If $u_k$ is uniformly bounded in $W^{1,p}(S^n,g_{S^n})$ for $p>n$, then there exists $u_\infty\in W^{1,p}(S^n,g_{S^n})$ such that, up to a subsequence, $u_k\rightarrow u_\infty$ weakly in $W^{1,p}(S^n,g_{S^n})$ and strongly in $C^\alpha(S^n)$ for $0<\alpha<1-\frac np$ as $k\rightarrow+\infty$. In addition, by Sobolev embedding theory, we conclude that $||u_k||_{C^\alpha(S^n)}\leqslant C.$ Let $P=1+\sup_{k\in\mathbb{N}}\sup_{S^n}[-C_*u_k^{2/n-1}]$. Then $P$ is bounded. From the fact that $u_k>0$ and \eqref{lowerboundofH_k}, it follows that
\begin{eqnarray*}
 0&\leqslant&(H_k-C_*)u_k^\frac{n+1}{n-1}\\
 &=&a_n\frac{\partial u_k}{\partial\eta_e}+u_k-C_*u_k^\frac{n+1}{n-1}\\
 &=&a_n\frac{\partial u_k}{\partial\eta_e}+\Big(1-C_*u_k^\frac{2}{n-1}\Big)u_k\\
 &\leqslant&a_n\frac{\partial u_k}{\partial\eta_e}+Pu_k.
\end{eqnarray*}
Since $\Delta_{g_e}u_k=0$ and $\mbox{vol}(S^n,g_k)=\omega_n$, we may apply \cite[Theorem 7.2]{xz} to obtain that $u_k\geqslant C^{-1}$. Now, it follows from the assumption $||H_k-H_\infty||_{L^p(S^n,g_k)}\rightarrow0$ that $u_\infty$ weakly solves
$$
\left\{
\begin{array}{ll}
 \Delta_{g_e}u_\infty=0,&\mbox{in}~~B^{n+1},\\
 a_n\frac{\partial u_\infty}{\partial\eta_e}+u_\infty=H_\infty u_\infty^\frac{n+1}{n-1},&\mbox{on}~~S^n.
\end{array}
\right.
$$
By standard elliptic regularity and boostrapping, we conclude that $u_\infty$ is smooth since $H_\infty$ is smooth. Moreover, we have $C^{-1}\leqslant u_\infty\leqslant C$ and from 
$$a_n(\frac{\partial u_k}{\partial\eta_e}-\frac{\partial u_\infty}{\partial\eta_e})=u_\infty-u_k+(H_k-H_\infty)u_k^\frac{n+1}{n-1}+H_\infty(u_k^\frac{n+1}{n-1}-u_\infty^\frac{n+1}{n-1}),$$
it follows that $u_k\rightarrow u_\infty$ strongly in $W^{1,p}(S^n,g_{S^n})$. Since $u_\infty>0$, we may let $g_\infty=u_\infty^{4/(n-1)}g_e$. Then the metric $g_\infty$ has mean curvature $H_\infty$.\vspace{0.4em}

\noindent(ii) If the second case of Theorem \ref{cc} occurs, we then prove that this leads to the concentration behavior as described in the Theorem \ref{sharpversionofcc}. The proof will be divided into several claims.

\noindent{\bf Claim 1}: There exists only one concentration point in the sense of \eqref{concentration}.

\noindent{\itshape Proof of Claim 1}: By \eqref{boundofH_infty} and the fact that $\mbox{vol}(S^n,g_k)=\omega_n$, we can estimate
\begin{eqnarray*}
 ||H_k||_{L^n(S^n,g_k)}&\leqslant&||H_k-H_\infty||_{L^n(S^n,g_k)}+\tau\omega_n^\frac1n.
\end{eqnarray*}
This together with \eqref{lpconvergestoH_infty} implies that
$$\liminf_{k\rightarrow+\infty}\int_{S^n}|H_k|^n~d\mu_{g_k}\leqslant\tau^n\omega_n<2\omega_n.$$
 Now, suppose $\{x_1, \dots, x_m \}$, defined in the Theorem \ref{cc}, are concentration points with $m\geqslant2$. Let $0<r<\frac12\{\mbox{dist}(x_i,x_j);1\leqslant i<j\leqslant m\}$. It follows from \eqref{concentration} and the estimate above that
  \begin{eqnarray*}
    m&\leqslant&\sum_{i=1}^m\liminf_{k\rightarrow+\infty}\omega_n^{-1}
    \int_{\partial^\prime B_r(x_i)}|H_k|^\frac n2~d\mu_k\\
    &\leqslant&\liminf_{k\rightarrow+\infty}\bigg[\sum_{i=1}^m\omega_n^{-1}\int_{\partial^\prime B_r(x_i)}|H_k|^n~d\mu_k\bigg]\\
    &\leqslant&\liminf_{k\rightarrow+\infty}\omega_n^{-1}\int_{\cup_{i=1}^m\partial^\prime B_r(x_i)}|H_k|^n~d\mu_k\\
    &\leqslant&\liminf_{k\rightarrow+\infty}\dashint_{S^n}|H_k|^n~d\mu_k<2,
  \end{eqnarray*}
  which implies that $m<2$ and thus contradicts with $m\geqslant2$. This shows that $m=1$, i.e. concentration point is unique. 
 
\noindent{\bf Claim 2}: There exists a constant $C>0$ such that $C^{-1}\leqslant\varv_k\leqslant C$.

\noindent{\itshape Proof of Claim 2}: For the normalized sequence $(\varv_k)_k$, we note that
  $$\int_{S^n}H_k\circ\varphi_k~d\mu_{h_k}=\int_{S^n}H_k~d\mu_{g_k}$$
  $$\int_{S^n}\bigg|H_k\circ\varphi_k-\int_{S^n}H_k\circ\varphi_k~d\mu_{h_k}\bigg|^p~d\mu_{h_k}
  =\int_{S^n}\bigg|H_k-\int_{S^n}H_k~d\mu_{g_k}\bigg|^p~d\mu_{g_k},$$
  $$R_{h_k}=R_k\circ\phi_k\equiv0\quad\mbox{and}\quad vol(S^n,h_k)=vol(S^n,g_k)=\omega_n.$$
  Therefore, it follows from \eqref{integralofH_kbounded} and \eqref{integralofH_kbounded1} that all the conditions in Theorem \ref{cc} hold for $(h_k)_k$. This means that we can apply Theorem \ref{cc} to the sequential metrics $(h_k)_k$. We claim that the case (ii) in Theorem \ref{cc} can never occur to $(h_k)_k$. Suppose the contrary. Then, we may follow the exact same proof as that in {\bf Claim 1} above to get that there exists a unique point $Q$ such that \eqref{concentration} holds for $(h_k)_k$. So, for sufficiently large $k$ and any $r>0$, we have, by \eqref{boundofH_infty} and \eqref{lpconvergestoH_infty}, that
\begin{eqnarray*}
    &&\omega_n^\frac 1n+o(1)\leqslant\bigg(\int_{\partial^\prime B_r(Q)}|H_{h_k}|^n~d\mu_{h_k}\bigg)^\frac 1n\\
    &&\quad\leqslant\bigg(\int_{\partial^\prime B_r(Q)}|H_{h_k}-H_\infty\circ\varphi_k|^n~d\mu_{h_k}\bigg)^\frac 1n+\max_{S^n}\big|H_\infty\big|\bigg(\int_{\partial^\prime B_r(Q)}~d\mu_{h_k}\bigg)^\frac 1n\\
    &&\quad\leqslant o(1)+\tau
    \bigg(\int_{\partial^\prime B_r(Q)}~d\mu_{h_k}\bigg)^\frac 1n,
  \end{eqnarray*}
   which implies that
  \begin{equation*}
    \int_{\partial^\prime B_r(Q)}~d\mu_{h_k}\geqslant \tau^{-n}\omega_n+o(1).
  \end{equation*}
  Since $vol(S^n,h_k)=\omega_n$, we get
  \begin{equation}\label{v1}
    \int_{S^n\backslash \partial^\prime B_r(Q)}~d\mu_{h_k}\leqslant(1-\tau^{-n})\omega_n+o(1).
  \end{equation}
  From \eqref{v1}, it follows that
  \begin{eqnarray*}
    \bigg|\Big|\dashint_{S^n}x~d\mu_{h_k}\Big|-1\bigg|&=&\bigg|\Big|\dashint_{S^n}x~d\mu_{h_k}\Big|-|Q|\bigg|\\
    &\leqslant&\bigg|\dashint_{S^n}x~d\mu_{h_k}-Q\bigg|\leqslant\dashint_{S^n}|x-Q|~d\mu_{h_k}\\
    &=&\omega_n^{-1}\int_{S^n\backslash \partial^\prime B_r(Q)}|x-Q|~\varv_k^{2^\#}d\mu_{S^n}+\omega_n^{-1}\int_{\partial^\prime B_r(Q)}|x-Q|~\varv_k^{2^\#}d\mu_{S^n}\\
    &\leqslant&2(1-\tau^{-n})+r+o(1).
  \end{eqnarray*}
  Notice that $\tau<2^{1/n}$. This implies that $2\tau^{-n}-1>0$. Now, by choosing $r=(2\tau^{-n}-1)/2$ and $k$ large enough, we then have
  \begin{eqnarray*}
    \bigg|\dashint_{S^n}x~d\mu_{h_k}\bigg|&\geqslant&1-2(1-\tau^{-n})-r+o(1)\\
    &=&\frac{2\tau^{-n}-1}{2}+o(1)>0.
  \end{eqnarray*}
  However, this contradicts with the fact that $h_k$ satisfies \eqref{normalizedcondition}. Such a contradiction shows that case (i) in Theorem \ref{cc} will happen to $(h_k)_k$, that is, $\varv_k$ is uniformly bounded in $W^{1,p}(S^n,g_{S^n})$ for $p>n$. By Sobolev embedding theory, we conclude that there exists a positive constant $C$ such that $||\varv_k||_{C^\alpha(S^n)}\leqslant C$ for $0<\alpha<1-n/p$. Let
  $$P:=1+\sup_{k\in\mathbb{N}}\sup_{S^n}[-C_*\varv_k^{4/(n-1)}].$$
Then by using the facts that $\varv_k>0$ and $C_*\leqslant H_k\circ\phi_k$, we follow the same proof as before to get
$$
\left\{
  \begin{array}{ll}
  \Delta_{g_e}\varv_k=0,&\mbox{in}~~B^{n+1},\\
a_n\frac{\partial \varv_k}{\partial\eta_e}+P\varv_k\geqslant0,&\mbox{on}~~S^n.
  \end{array}
\right.  
$$
  From \cite[Theorem 7.2]{xz} and the fact that $\int_{S^n}\varv_k^{2^\#}~d\mu_{S^n}=\omega_n,$
  it follows that $\varv_k\geqslant C^{-1}$.
 
\noindent{\bf Claim 3}: $\varv_k\rightarrow1$ in $C^\alpha(S^n)$ with $0<\alpha<1-n/p$.

\noindent{\itshape Proof of Claim 3}: Since $\varv_k$ is bounded in $W^{1,p}(S^n,g_{S^n})$ by the proof of Claim 2, it follows from the Sobolev embedding theory that there exists a function $\varv_\infty\in W^{1,p}(S^n,g_{S^n})$ such that, up to a subsequence, $\varv_k\rightarrow \varv_\infty$ weakly in $W^{1,p}(S^n,g_{S^n})$ and strongly in $C^\alpha(S^n)$ with $0<\alpha<1-n/p$.

 Since $(p_k)_k\subset S^n$, we may assume that $p_k\rightarrow Q_*$ for some $Q_*\in S^n$.  Up to here, we claim that the parameter $\epsilon_k$ in the conform transformation $\varphi_k$ satisfies $\epsilon_k\rightarrow0$ as $k\rightarrow+\infty$. If not, we may assume that $\epsilon_k\rightarrow \epsilon_0>0$, then $\varphi_k\rightarrow\varphi_{Q_*,\epsilon_0}$ and thus $\det(d\varphi_k)\rightarrow\det(d\varphi_{Q_*,\epsilon_0})$ which is bounded away from zero. From this fact, the assertion of {\bf Claim 2} and \eqref{normalizedfunction}, it follows that $u_k$ is bounded from both below and above by a positive constant. Now, in view of the equation
  $$a_n\frac{\partial u_k}{\partial\eta_e}+u_k=H_ku_k^{2^\#-1},$$
we can get that $u_k$ is uniformly bounded in $W^{1,p}(S^n,g_{S^n})$, which contradicts with our assumption. Hence, we have $\epsilon_k\rightarrow0$. By the definition of $\varphi_k$, we conclude that $\varphi_k\rightarrow Q_*$ for $x\in S^n\backslash\{-Q_*\}$. This fact together with \eqref{lpconvergestoH_infty}, {\bf Claim 2} and the dominated convergence theorem implies that
 \begin{eqnarray}
   \label{H_klpconvergence}
   ||H_{h_k}-H_\infty(Q_*)||_{L^p(S^n,h_k)}&\leqslant&||H_{h_k}-H_\infty\circ\varphi_k||_{L^p(S^n,h_k)}\nonumber\\
   &&+||H_\infty\circ\varphi_k-H_\infty(Q_*)||_{L^p(S^n,h_k)}\rightarrow0
 \end{eqnarray}
  as $k\rightarrow+\infty$. This implies that $\varv_\infty$ weakly solves
  \begin{equation*}
  \left\{
  \begin{array}{ll}
    \Delta_{g_e}\varv_\infty=0&\mbox{in}~~B^{n+1},\\
    a_n\frac{\partial\varv_\infty}{\partial\eta_e}+\varv_\infty=H_\infty(Q_*)\varv_\infty^{2^\#-1}&\mbox{on}~~S^n.
  \end{array}
  \right.
  \end{equation*}
  Since we have $\int_{S^n}x\varv_k^{2^\#}~d\mu_{S^n}$=0 and $\int_{S^n}\varv_k^{2^\#}~d\mu_{S^n}=\omega_n$, $\varv_\infty$ will satisfy $\int_{S^n}x\varv_\infty^{2^\#}~d\mu_{S^n}$=0 and $\int_{S^n}\varv_\infty^{2^\#}~d\mu_{S^n}=\omega_n$. It follows, by the Escobar's uniqueness theorem, that $\varv_\infty$ must be a constant and hence $\varv_\infty\equiv1$. Moreover, plugging $\varv_\infty\equiv1$ into the equation above yields $H_\infty(Q_*)=1$. Finally, substituting $H_\infty(Q^*)=1$ into \eqref{H_klpconvergence} gives 
  \begin{equation}\label{H_kconvergesto1}
  ||H_k-1||_{L^p(S^n,g_k)}\rightarrow0,\quad\mbox{as}~k\rightarrow+\infty.
  \end{equation}
 
 \noindent{\bf Claim 4}: There exists a unique point $Q\in S^n$ such that $d\mu_{g_k}\rightarrow\omega_n\delta_Q$ weakly in the sense of measure as $k\rightarrow+\infty$.
 
 \noindent{\itshape Proof of Claim 4}: It follows from \eqref{H_kconvergesto1}, {\bf Claim 1} and \eqref{concentration} that there exists a unique point $Q\in S^n$ such that, for $k$ large enough and any $r>0$, there holds
 \begin{eqnarray*}
   &&\omega_n^\frac 1n+o(1)\leqslant\bigg(\int_{\partial^\prime B_r(Q)}|H_k|^n~d\mu_{k}\bigg)^\frac 1n\\
    &&\quad\leqslant\bigg(\int_{\partial^\prime B_r(Q)}|H_k-1|^n~d\mu_{k}\bigg)^\frac 1n+\bigg(\int_{\partial^\prime B_r(Q)}~d\mu_{k}\bigg)^\frac 1n\\
    &&\quad=o(1)+\bigg(\int_{\partial^\prime B_r(Q)}~d\mu_{k}\bigg)^\frac 1n\leqslant \omega_n^\frac 1n+o(1).
\end{eqnarray*}
Hence, $d\mu_k\rightarrow\omega_n\delta_Q,$ weakly in the sense of measure as $k\rightarrow+\infty$.
 
\noindent{\bf Claim 5}: There hold (a) $H_\infty(Q)=1$ and (b) $||\varphi_k-Q||_{L^2(S^n,g_{S^n})}\rightarrow0$ as $k\rightarrow+\infty$.

\noindent{\itshape Proof of Claim 5}: In view of the proof of {\bf Claim 3}, it suffices to show that $Q_*=Q$. Indeed, 
 on one hand, from {\bf Claim 4} it follows that
$$\dashint_{S^n}x~d\mu_k\rightarrow Q,\quad\mbox{as}~~k\rightarrow+\infty.$$
On the other hand, it follows from the fact that $\varv_k$ is uniformly bounded and the dominated convergence theorem that
  \begin{eqnarray*}
    \bigg|\dashint_{S^n}\varphi_kd\mu_{h_k}-Q_*\bigg|&\leqslant&\dashint_{S^n}|\varphi_k-Q_*|d\mu_{h_k}\rightarrow0,
  \end{eqnarray*}
  as $k\rightarrow+\infty$. Notice that, by the change of variables, one has
  $$\dashint_{S^n}x~d\mu_k=\dashint_{S^n}\varphi_kd\mu_{h_k}.$$
Now, it is easy to see that $Q_*=Q$.
\end{proof}

\subsection{Blow-up analysis}\label{blowup}
The principal goal of this subsection is to describe the sequential asymptotic behavior of the flow 
\eqref{eeforu}. Firstly, let us recall that $f$ satisfies $\max_{S^n}|f|/(\dashint_{S^n}f~d\mu_{S^n})<2^{1/n}$. Hence, we can choose
$$\sigma=\frac12\bigg[2^\frac1n
\bigg(\frac{\dashint_{S^n}f~d\mu_{S^n}}{\max_{S^n}|f|}\bigg)-1\bigg]>0,$$
and set
\begin{equation}\label{beta}
\beta=(1+\sigma)^\frac{n-1}{n}
\bigg(\dashint_{S^n}f~d\mu_{S^n}\bigg)^\frac{1-n}{n}.
\end{equation}
With all notations above settled, we finally define the set
$$X_f=\bigg\{u\in X_*;\int_{S^n}u^{2^\#}~d\mu_{S^n}=\omega_n~~\mbox{and}~~E_f[u]\leqslant\beta\bigg\}.$$
\begin{remark}\label{rk2}
  Notice that $X_f\neq\emptyset$, In fact, when $u\equiv1$ we have $\int_{S^n}u^{2^\#}=\omega_n$, $\dashint_{S^n}fu^{2^\#}~d\mu_{S^n}=\dashint_{S^n}f~d\mu_{S^n}>0$ and $E_f[u]=(\dashint_{S^n}f~d\mu_{S^n})^{(1-n)/n}<\beta$. Hence, $u\equiv1\in X_f$.
\end{remark}

Then for any $u_0\in X_f$, it follows from Proposition \ref{globalexistence} that the flow \eqref{eeforu} has a unique smooth solution $u(t)$ well defined on $[0,+\infty)$. For an arbitrary time sequence $(t_k)_k\subset[0,+\infty)$ with $t_k\rightarrow+\infty$ as $k\rightarrow+\infty$, we set
$$u_k=u(t_k),\quad g_k=g(t_k),\quad H_k=H_{g_k}\quad\mbox{and}\quad d\mu_k=d\mu_{g_k}.$$
In order to apply Theorem \ref{sharpversionofcc} to our flow, we have to verify all conditions in that theorem are satisfied.
\begin{lemma}
For the sequence $(u_k)_k$ with the associated mean curvatures $H_k$ defined as bove, there hold

\noindent{\upshape(i)} $vol(S^n,g_k)=\omega_n$ and $R_k\equiv0$.

\noindent{\upshape(ii)} there exists a smooth function $H_\infty$ with $\max_{S^n}\big|H_\infty\big|\leqslant\tau<2^{1/n}$ such that $||H_k-H_\infty||_{L^p(S^n,g_k)}\rightarrow0$, for some $p>n$;

\noindent{\upshape(iii)} there exists a sequence of smooth functions $\sigma_k(x)$ with $\sup_{k\in\mathbb{N}}||\sigma_k||_{C^0(S^n)}\leqslant C_*$ for some positive number $C_*$ such that $\sigma_k\leqslant H_k$.
\end{lemma}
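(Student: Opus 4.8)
The plan is to verify (i), (ii), (iii) one at a time, each reducing to material already developed for the flow \eqref{eeforu}. Item (i) is built into the construction: the multiplier $\lambda$ in \eqref{eforlambda} was chosen precisely so that $vol(S^n,g(t))$ is preserved along the flow, and since $u_0\in X_f\subset X_*$ normalizes $\int_{S^n}u_0^{2^\#}~d\mu_{S^n}=\omega_n$, this gives $vol(S^n,g_k)=\omega_n$ for every $k$; the identity $R_k\equiv0$ holds because the flow \eqref{eeforu} keeps $\Delta_{g_e}u(t)=0$, equivalently keeps $g(t)=u(t)^{4/(n-1)}g_e$ scalar-flat.

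For item (ii), I would first use Lemma \ref{bdoflambda}, which confines $\lambda(t_k)$ to the compact interval $[\lambda_1,\lambda_2]$, to pass to a subsequence along which $\lambda(t_k)\to\lambda_\infty\in[\lambda_1,\lambda_2]$, and set $H_\infty=\lambda_\infty f$, which is smooth. Fixing a $p>n$, the convergence $\|H_k-H_\infty\|_{L^p(S^n,g_k)}\to0$ then follows by splitting $H_k-H_\infty=(H_k-\lambda(t_k)f)+(\lambda(t_k)-\lambda_\infty)f$: the first summand tends to $0$ by Proposition \ref{lp}, while the $L^p(S^n,g_k)$-norm of the second is at most $|\lambda(t_k)-\lambda_\infty|(\max_{S^n}|f|)\,\omega_n^{1/p}\to0$ since $vol(S^n,g_k)=\omega_n$. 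The substantive point is the bound $\max_{S^n}|H_\infty|<2^{1/n}$. Here I would estimate $\max_{S^n}|H_\infty|=\lambda_\infty\max_{S^n}|f|\leqslant\lambda_2\max_{S^n}|f|$, then use $u_0\in X_f$, which forces $\dashint_{S^n}u_0^{2^\#}~d\mu_{S^n}=1$ and $E_f[u_0]\leqslant\beta$, so that the formula for $\lambda_2$ in Lemma \ref{bdoflambda} gives $\lambda_2=(E_f[u_0])^{n/(n-1)}\leqslant\beta^{n/(n-1)}$; finally, unwinding \eqref{beta} and the definition of $\sigma$,
$$\beta^{n/(n-1)}\max_{S^n}|f|=(1+\sigma)\frac{\max_{S^n}|f|}{\dashint_{S^n}f~d\mu_{S^n}}=\frac12\frac{\max_{S^n}|f|}{\dashint_{S^n}f~d\mu_{S^n}}+\frac12\cdot2^{1/n}<2^{1/n},$$
the last inequality being the standing assumption $(\max_{S^n}|f|)\big/\big(\dashint_{S^n}f~d\mu_{S^n}\big)<2^{1/n}$. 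Any $\tau$ with $\lambda_2\max_{S^n}|f|\leqslant\tau<2^{1/n}$ then works.

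Item (iii) will follow from Lemma \ref{lowerboundofH}, which gives $H(t)-\lambda(t)f\geqslant\gamma$ on $S^n$ for all $t\geqslant0$: I would simply take $\sigma_k:=\lambda(t_k)f+\gamma$, which is smooth, satisfies $\sigma_k\leqslant H_k$, and is uniformly bounded by $\|\sigma_k\|_{C^0(S^n)}\leqslant\lambda_2\max_{S^n}|f|+|\gamma|=:C_*$ thanks to Lemma \ref{bdoflambda}. The only step needing any care is the chain of inequalities in (ii) establishing $\max_{S^n}|H_\infty|<2^{1/n}$; everything else is bookkeeping against the flow estimates. That step is, in effect, the reason the admissible set $X_f$ carries the energy cap $E_f[u]\leqslant\beta$: the cap is calibrated so that the a priori bound $\lambda_2$ on the multiplier $\lambda$ forces the limiting curvature candidate $H_\infty$ into the subcritical range demanded by Theorem \ref{sharpversionofcc}.
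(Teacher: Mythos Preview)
Your proof is correct and follows essentially the same route as the paper: (i) is the volume-preserving property plus scalar-flatness of the flow, (ii) passes to a subsequential limit $\lambda_\infty$ and sets $H_\infty=\lambda_\infty f$ with the bound $\lambda_2\max_{S^n}|f|\leqslant\beta^{n/(n-1)}\max_{S^n}|f|<2^{1/n}$, and (iii) uses Lemma \ref{lowerboundofH}. The only cosmetic differences are that you spell out the arithmetic behind $\tau<2^{1/n}$ more explicitly, and in (iii) you take $\sigma_k=\lambda(t_k)f+\gamma$ whereas the paper simply records the constant lower bound $H_k\geqslant -\lambda_2\max_{S^n}|f|+\gamma$.
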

\begin{proof}
 (i) from the choice of the initial data $u_0$ and the volume-preserving property of the flow \eqref{eeforu}, it follows that $vol(S^n,g_k)=\omega_n$. Moreover, the flow equation \eqref{mee} shows that $R_k\equiv0$.

 \noindent(ii) By Lemma \ref{bdoflambda}, we may assume that, up to a subsequence, $\lambda(t_k)\rightarrow\lambda_\infty$ as $k\rightarrow+\infty$, where $\lambda_\infty\in[\lambda_1,\lambda_2]$. Now, by the definition of $\lambda_2$ and the choices of initial data $u_0$ and $\beta$, we can estimate
 \begin{eqnarray*}
  \lambda_\infty|f|&\leqslant&\lambda_2\max_{S^n}|f|\leqslant(E_f[u_0])^\frac{n}{n-1}\max_{S^n}|f|\\
  &\leqslant&\beta^\frac{n}{n-1}\max_{S^n}|f|=(1+\sigma)\frac{\max_{S^n}|f|}{\dashint_{S^n}f~d\mu_{S^n}}.
  \end{eqnarray*}
By setting $\tau=(1+\sigma)\max_{S^n}|f|/\dashint_{S^n}f~d\mu_{S^n}$, we obtain $\lambda_\infty|f|\leqslant\tau<2^{1/n}$. Moreover, from Proposition \ref{lp}, it follows that
\begin{eqnarray*}
 ||H_k-\lambda_\infty f||_{L^p(S^n,g_k)}&\leqslant&||H_k-\lambda(t_k)f||_{L^p(S^n,g_k)}\\
 &&+|\lambda(t_k)-\lambda_\infty|||f||_{L^p(S^n,g_k)}\rightarrow0,
\end{eqnarray*}
for some $p>n$. By setting $H_\infty=\lambda_\infty f$, we thus complete the proof of (i). 
 
\noindent(ii) By Lemma \ref{lowerboundofH}, we have $H_k\geqslant\lambda(t_k)f+\gamma$. From Lemma \ref{bdoflambda}, we conclude that $\lambda(t_k)f\geqslant-\lambda_2\max_{S^n}|f|$. By setting $C_*=-\lambda_2\max_{S^n}|f|+\gamma$, we thus obtain $H_k\geqslant C_*$ 
\end{proof}

From this lemma, a direct consequence of Theorem \ref{sharpversionofcc} reads
\begin{corollary}
 \label{sequentialblowup}
 Let $u(t)$ be the smooth solution of the flow \eqref{eeforu} with initial data $u_0\in X_f$. Associated with the sequential metrics $g_k=u_k^{4/(n-1)}g_e$, we let $h_k=\phi_k^*g_k=\varv_k^{4/(n-1)}g_e$ be the sequence of corresponding normalized metrics, where $\phi_k$ is the conformal transformation on the unit ball and its restriction on the boundary are given by $\varphi_k=\phi_k|_{S^n}=\varphi_{p_k,\epsilon_k}$ with $p_k=p(t_k)$ and $\epsilon_k=\epsilon(t_k)$. Then there hold either
 
 \noindent{\upshape(i)} there exists a positive function $u_\infty\in W^{1,p}(S^n,g_{S^n})$ such that $u_k\rightarrow u_\infty$ in $W^{1,p}(S^n,g_{S^n})$. In addition, if we let $g_\infty=u_\infty^{4/(n-1)}g_e$ then $g_\infty$ has mean curvature $\lambda_\infty f$; or
 
 \noindent{\upshape(ii)} there exists a uniqueness point $Q$ (depending only on $u_0$) such that $d\mu_k\rightarrow\omega_n\delta_Q$ weakly in the sense of measure as $k\rightarrow+\infty$. Moreover, in the latter case, one has {\upshape(a)} $\lambda_\infty f(Q)=1$. In particular, $f(Q)>0$, and {\upshape(b)} $||\varv_k-1||_{C^\alpha(S^n)}\rightarrow0$ with $0<\alpha<1-n/p$, $||H_k-1||_{L^p(S^n,g_k)}\rightarrow0$ and $||\varphi_k-Q||_{L^2(S^n,g_{S^n})}\rightarrow0$ as $k\rightarrow+\infty$.
\end{corollary}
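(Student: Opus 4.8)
The plan is to read off Corollary~\ref{sequentialblowup} from Theorem~\ref{sharpversionofcc}, once the three hypotheses of the latter have been checked for the sequence $u_k=u(t_k)$. That verification is precisely the content of the preceding lemma: part (i) supplies $\vol(S^n,g_k)=\omega_n$ and $R_k\equiv0$; part (iii) supplies the constant lower bound $H_k\geqslant C_*$ with $C_*=-\lambda_2\max_{S^n}|f|+\gamma$, which is exactly \eqref{lowerboundofH_k}; and part (ii) produces, after passing to a subsequence along which $\lambda(t_k)\to\lambda_\infty\in[\lambda_1,\lambda_2]$ by Lemma~\ref{bdoflambda}, the smooth function $H_\infty=\lambda_\infty f$. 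By the choice of $\beta$ and of the set $X_f$ one has $\max_{S^n}|H_\infty|=\lambda_\infty\max_{S^n}|f|\leqslant\tau<2^{1/n}$, so \eqref{boundofH_infty} holds; and Proposition~\ref{lp} together with Lemma~\ref{bdoflambda} gives $\|H_k-H_\infty\|_{L^p(S^n,g_k)}\leqslant\|H_k-\lambda(t_k)f\|_{L^p(S^n,g_k)}+|\lambda(t_k)-\lambda_\infty|\,\|f\|_{L^p(S^n,g_k)}\to0$ for every $p>n$, which is \eqref{lpconvergestoH_infty}. Hence Theorem~\ref{sharpversionofcc} applies with this choice of $H_\infty$.

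It remains to translate the resulting dichotomy. Alternative (i) of Theorem~\ref{sharpversionofcc} is verbatim alternative (i) here, the limit metric $g_\infty=u_\infty^{4/(n-1)}g_e$ carrying mean curvature $H_\infty=\lambda_\infty f$. In alternative (ii), \eqref{volumeconcentration} is the stated volume concentration $d\mu_k\to\omega_n\delta_Q$; conclusion (a), namely $H_\infty(Q)=1$, reads $\lambda_\infty f(Q)=1$, and since $\lambda_\infty\geqslant\lambda_1>0$ this forces $f(Q)=\lambda_\infty^{-1}>0$, which is (a); and \eqref{nfandctconverge} is exactly the triple of convergences in (b). The only assertion not literally contained in Theorem~\ref{sharpversionofcc} is that the blow-up point $Q$ depends on $u_0$ alone. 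Here I would invoke the change-of-variables identity $\dashint_{S^n}x\,d\mu_{g(t)}=\dashint_{S^n}\varphi(t)\,d\mu_{h(t)}$ along the conformal transformation $\varphi(t)$, exactly as in the proof of Theorem~\ref{sharpversionofcc}: combined with $\varv_k\to1$ in $C^\alpha(S^n)$ and $\varphi_k\to Q$ in $L^2(S^n,g_{S^n})$ from (b) it yields $\dashint_{S^n}x\,d\mu_{g_k}\to Q$, so that $Q$ is a subsequential limit of the time-continuous curve $m(t)\definedas\dashint_{S^n}x\,d\mu_{g(t)}\in\mathbb{R}^{n+1}$; since any divergent time sequence realizing alternative (ii) feeds into the same computation and $m$ converges along it, two such sequences must share the same limit, and that common value is $Q=Q(u_0)$.

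I do not expect a serious obstacle: analytically everything needed sits in \S\S2--4 and in the compactness--concentration Theorems~\ref{cc} and \ref{sharpversionofcc}, and Corollary~\ref{sequentialblowup} is essentially a dictionary entry rephrasing those in terms of the flow \eqref{eeforu}. If any point deserves care it is the last one --- pinning down that $Q$ is independent of the chosen sequence $t_k\to+\infty$ --- since a fully rigorous argument really wants the genuine convergence of the center of mass $m(t)$ as $t\to+\infty$ in the divergent regime, not merely the existence of subsequential limits; that is the step on which I would spend the most effort, by playing off the continuity of $m$ against Theorem~\ref{sharpversionofcc} applied along arbitrary time sequences.
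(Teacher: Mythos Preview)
Your proposal is correct and follows essentially the same approach as the paper: verify the hypotheses of Theorem~\ref{sharpversionofcc} via the preceding lemma and then read off the dichotomy with $H_\infty=\lambda_\infty f$. The only difference is organizational: the paper states the corollary as an immediate consequence and defers the point you single out---that $Q$ is independent of the chosen time sequence---to the subsequent Lemma~\ref{S(t)not0}, where it is handled (somewhat tersely) by exactly the center-of-mass argument you outline.
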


\subsection{Asymptotic behavior of the flow}
In view of Corollary \ref{sequentialblowup}, to prove our theorems, it suffices to prove that there exists a time sequence $(t_k)_k$ such that case (i) occurs to the corresponding metrics $(g_k)_k$. However, this assertion is generally hard to be obtained directly. Here, we adopt the contradiction argument. So, we assume that $f$ can not be realized as a mean curvature of any conformal metric. In other words, the flow will be divergent and then case (ii) happens to the metrics $(g_k)_k$ for arbitrary time sequence $(t_k)_k$. In this scenario, we can pass the sequential behavior to the uniform one of the flow \eqref{eeforu}.
For $t\geqslant0$, let
$$S=S(t)=\dashint_{S^n}x~d\mu_g$$
be the center of mass of $g=g(t)$. Then we have
\begin{lemma}\label{S(t)not0}
  $S(t)\rightarrow Q$ as $t\rightarrow+\infty$. In particular, $S(t)\neq0$ for all large $t$.
\end{lemma}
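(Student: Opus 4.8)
The plan is to read this off from Corollary \ref{sequentialblowup} together with the standing contradiction hypothesis, which guarantees that alternative (ii) of that corollary occurs along \emph{every} time sequence $t_k\to+\infty$. Granting for the moment that the concentration point $Q$ is the same for all such sequences, the convergence $S(t)\to Q$ is soft: for a fixed sequence $t_k\to+\infty$ one tests the weak convergence $d\mu_{g(t_k)}\to\omega_n\delta_Q$ against the (continuous, bounded) coordinate functions $x=(x^1,\dots,x^{n+1})$ on $S^n$ to get
$$S(t_k)=\dashint_{S^n}x~d\mu_{g(t_k)}=\frac1{\omega_n}\int_{S^n}x~d\mu_{g(t_k)}\longrightarrow\frac1{\omega_n}\,\omega_n\,Q=Q,$$
and since the limit is the \emph{same} point $Q$ for every sequence, a routine subsequence argument (if $S(t)\not\to Q$, pick $\varepsilon_0>0$ and $t_k\to+\infty$ with $|S(t_k)-Q|\geqslant\varepsilon_0$, contradicting the previous display for that very sequence) upgrades this to $S(t)\to Q$ as $t\to+\infty$. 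Finally, $Q\in S^n$ forces $|Q|=1$, hence $|S(t)|\to1$, so in particular $S(t)\neq0$ for all large $t$.

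The one point that needs care is that the concentration point does not depend on the chosen time sequence. I would argue this by interlacing: given two sequences $t_k\to+\infty$ and $s_k\to+\infty$ with associated concentration points $Q$ and $\widetilde Q$ (each provided by Corollary \ref{sequentialblowup}(ii)), form the interlaced sequence $r_1=t_1,\,r_2=s_1,\,r_3=t_2,\dots$, which still tends to $+\infty$; by the contradiction hypothesis, alternative (ii) of Corollary \ref{sequentialblowup} again applies, so $d\mu_{g(r_m)}\to\omega_n\delta_{Q'}$ weakly for some $Q'\in S^n$. Restricting to the subsequences $(r_{2m-1})_m=(t_m)_m$ and $(r_{2m})_m=(s_m)_m$ and using uniqueness of weak limits of measures gives $Q=Q'=\widetilde Q$. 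This is precisely the sense in which $Q=Q(u_0)$ ``depends only on $u_0$'' in the statement of Corollary \ref{sequentialblowup}, and it is the hypothesis that \emph{every} time sequence falls under alternative (ii) — i.e.\ that the flow is assumed divergent — that makes the interlacing step legitimate.

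I do not expect a genuine obstacle here: the analytic content has already been absorbed into Theorem \ref{sharpversionofcc} and Corollary \ref{sequentialblowup}, so the argument above is essentially bookkeeping. If anything, the only delicate spot is the one just described — ensuring that the interlaced (or any ad hoc) time sequence is still covered by alternative (ii), so that uniqueness of weak-$*$ limits can be invoked — and this is immediate from the contradiction hypothesis.
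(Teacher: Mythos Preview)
Your argument is correct and matches the paper's approach: apply Corollary \ref{sequentialblowup}(ii) along an arbitrary time sequence under the standing divergence hypothesis, test the weak convergence $d\mu_{g(t_k)}\to\omega_n\delta_Q$ against the coordinate functions to obtain $S(t_k)\to Q$, and conclude by arbitrariness of the sequence. Your interlacing step to justify that $Q$ is independent of the chosen sequence supplies detail the paper leaves implicit --- the paper simply records in Corollary \ref{sequentialblowup} that $Q$ ``depends only on $u_0$'' and then invokes arbitrariness directly.
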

\begin{proof}
  For an arbitrary time sequence $(t_k)_k\subset[0,+\infty)$ with $t_k\rightarrow+\infty$ as $k\rightarrow+\infty$, we can apply Corollary \ref{sequentialblowup} to the sequential metrics $(g(t_k))_k$ to get that $d\mu_{g_k}\rightarrow\omega_n\delta_Q$ as $k\rightarrow+\infty$. Hence
  $S(t_k)\rightarrow Q$ as $k\rightarrow+\infty$. By the arbitrariness of the sequence $(t_k)_k$, we thus conclude that $S(t)\rightarrow Q$ as $t\rightarrow+\infty$.
\end{proof}
In view of this lemma, we may assume that $S(t)\neq0$ for all $t\geqslant0$. Then the image of $S(t)$ under radial projection
$$Q(t)=S/|S|\in S^n$$
is well defined for all $t\geqslant0$.
\begin{proposition}\label{asymptoticbehavior}
  Suppose that $f$ can not be realized as the mean curvature of any conformal metric on the boundary $S^n$. Let $u(t)$ be the smooth solution of \eqref{eeforu}, $\varphi(t)$ the restriction of the conformal transformation $\phi(t)$ on the boundary, $\varv(t)$ the corresponding normalized flow and $h(t)$ the normalized metric. Then, as $t\rightarrow+\infty$, there hold
  \begin{itemize}
    \item[(i)] $\max_{S^n}u(\cdot,t)\rightarrow+\infty$;
    \item[(ii)] $\varv(t)\rightarrow1$ in $C^{\alpha}(S^n)$ for $\alpha\in(0,1)$, $\varphi(t)\rightarrow Q$ in $L^{2}(S^n,g_{S^n})$, and\\ $||H(t)-1||_{L^p(S^n,g(t))}\rightarrow0$ for $p>n$;\vspace{0.2em}
    \item[(iii)] $ Q(t)\rightarrow Q$ and $\lambda(t)f(Q(t))\rightarrow 1$;
    \item[(iv)] $E_f[u(t)]\rightarrow (f(Q))^\frac{1-n}{n}$, and furthermore,
    \item[(v)] $\nabla_{S^n}f(Q)=0$ and $\Delta_{S^n}f(Q)<0$.
  \end{itemize}
\end{proposition}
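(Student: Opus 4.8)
The plan is to upgrade the sequential blow-up picture of Corollary~\ref{sequentialblowup} to the full flow---this settles (i)--(iv)---and then to run a refined blow-up analysis at $Q$ for (v).

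\emph{Passing from sequences to the whole flow.} Since, by hypothesis, $f$ is not realized as the mean curvature of any conformal metric, alternative (i) of Corollary~\ref{sequentialblowup} can never occur: the limit $u_\infty>0$ there would give a scalar-flat conformal metric $g_\infty$ with mean curvature $\lambda_\infty f$, $\lambda_\infty\in[\lambda_1,\lambda_2]\subset(0,\infty)$, and then the constant rescaling $\lambda_\infty^{2}g_\infty=(\lambda_\infty^{(n-1)/2}u_\infty)^{4/(n-1)}g_e$ would have mean curvature $f$, a contradiction. Hence every sequence $t_k\to+\infty$ falls, along a subsequence, into alternative (ii); moreover the point $Q$ there depends only on $u_0$ and the limiting value is always $1$. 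As all subsequential limits therefore coincide, the convergences hold along the whole flow: $d\mu_{g(t)}\to\omega_n\delta_Q$ weakly in the sense of measure, $\|\varphi(t)-Q\|_{L^2(S^n,g_{S^n})}\to0$, $\|H(t)-1\|_{L^p(S^n,g(t))}\to0$ for every $p>n$, and $\|\varv(t)-1\|_{C^\alpha(S^n)}\to0$; since Proposition~\ref{lp} supplies the $L^p$-decay for every $p<\infty$, Corollary~\ref{sequentialblowup} may be invoked with $p$ arbitrarily large, so that the last convergence holds for every $\alpha\in(0,1)$. This is (ii). For (i): were $\max_{S^n}u(\cdot,t_k)$ bounded along some sequence, the identity $a_n\frac{\partial u_k}{\partial\eta_e}+u_k=H_ku_k^{2^\#-1}$ together with the uniform $L^p$-bound on $H_k$ would (as in the proof of Theorem~\ref{sharpversionofcc}(i)) force a uniform $W^{1,p}(S^n,g_{S^n})$-bound on $u_k$, i.e. alternative (i) of Corollary~\ref{sequentialblowup}---excluded above; hence $\max_{S^n}u(\cdot,t)\to+\infty$. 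For (iii): $d\mu_{g(t)}\to\omega_n\delta_Q$ gives $S(t)\to Q$, so $Q(t)=S(t)/|S(t)|\to Q$ by Lemma~\ref{S(t)not0}; and any subsequential limit of $\lambda(t)$ (which exists in $[\lambda_1,\lambda_2]$ by Lemma~\ref{bdoflambda}) must equal the $\lambda_\infty$ with $\lambda_\infty f(Q)=1$, so $\lambda(t)\to 1/f(Q)$ (in particular $f(Q)>0$) and $\lambda(t)f(Q(t))\to1$.

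\emph{The energy limit (iv).} By the definition of $E_f$ (with $2/2^\#=(n-1)/n$), $E_f[u(t)]=E[u(t)]\big(\dashint_{S^n}fu(t)^{2^\#}\,d\mu_{S^n}\big)^{-(n-1)/n}$. By \eqref{eforE[u]}, $E[u(t)]=\dashint_{S^n}H(t)\,d\mu_{g(t)}$, and since $\mathrm{vol}(S^n,g(t))=\omega_n$ and $\|H(t)-1\|_{L^p(S^n,g(t))}\to0$, Hölder's inequality gives $|E[u(t)]-1|\le\big(\dashint_{S^n}|H(t)-1|^p\,d\mu_{g(t)}\big)^{1/p}\to0$. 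On the other hand $\dashint_{S^n}fu(t)^{2^\#}\,d\mu_{S^n}=\omega_n^{-1}\int_{S^n}f\,d\mu_{g(t)}\to f(Q)>0$ by the weak convergence of $d\mu_{g(t)}$ and the continuity of $f$. Hence $E_f[u(t)]\to f(Q)^{(1-n)/n}$.

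\emph{The balancing conditions (v).} This is the heart of the matter. The input is a \emph{sharp} blow-up expansion: combining the convergence $\varv(t)\to1$ with elliptic estimates, with the parametrization $\varphi(t)=\varphi_{p(t),\epsilon(t)}$ ($\epsilon(t)\to0$, $p(t)\to Q$), and with the integrated decay $\int_0^\infty\dashint_{S^n}(\lambda f-H)^2\,d\mu_g\,dt<\infty$ from Lemma~\ref{usefulformulae}(ii), one shows $u(t)=(1+o(1))\,v_{p(t),\epsilon(t)}$ in a norm strong enough (where $v_{p,\epsilon}$ is the standard bubble adapted to $\varphi_{p,\epsilon}$), and controls $|p(t)-Q|$ and the Sobolev deficit $E[u(t)]-1$ to an order negligible against the second moment $c(\epsilon(t))$ of the conformal volume. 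One also uses the elementary expansion (valid because $(\varphi_{p,\epsilon})_*\mu_{S^n}$ is invariant under the rotations fixing $p$) $\dashint_{S^n}f(\varphi_{p,\epsilon}(x))\,d\mu_{S^n}(x)=f(p)+\tfrac12 c(\epsilon)\Delta_{S^n}f(p)+o(c(\epsilon))$ with $c(\epsilon)>0$, $c(\epsilon)\to0$. Plugging these into the Kazdan--Warner/Pohozaev identity carried by the scalar-flat metric $g(t)$---obtained by testing $a_n\frac{\partial u}{\partial\eta_e}+u=Hu^{2^\#-1}$ against the conformal vector fields generated by first spherical harmonics---produces the first-order balancing $\nabla_{g_{S^n}}f(p(t))\to0$, hence $\nabla_{g_{S^n}}f(Q)=0$. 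Then, since $\nabla_{g_{S^n}}f(Q)=0$ (so $f(p(t))=f(Q)+o(c(\epsilon(t)))$), the monotonicity $E_f[u(t)]\ge\lim_{s\to\infty}E_f[u(s)]=f(Q)^{(1-n)/n}$ combined with $E_f[u(t)]=(1+o(1))\big(f(Q)+\tfrac12 c(\epsilon(t))\Delta_{S^n}f(Q)+o(c(\epsilon(t)))\big)^{-(n-1)/n}$ forces $\Delta_{S^n}f(Q)\le0$; and since $\nabla_{g_{S^n}}f(Q)=0$, the non-degeneracy hypothesis $|\nabla_{g_{S^n}}f|^2+(\Delta_{S^n}f)^2\ne0$ at $Q$ rules out equality, giving $\Delta_{S^n}f(Q)<0$. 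The main obstacle is the sharp expansion: the error $u(t)-v_{p(t),\epsilon(t)}$ and the drift of $p(t)$ toward $Q$ must be controlled precisely enough that the $O(c(\epsilon))$ balancing terms survive, which is exactly where the estimates of \S\S3--4 and the gradient-flow structure of $E_f$ enter.
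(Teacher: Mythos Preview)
Your argument for (i)--(iv) is exactly the paper's: it too rules out alternative~(i) of Corollary~\ref{sequentialblowup} by observing that the resulting $u_\infty$ would, after constant rescaling, realize $f$ as a mean curvature, and then upgrades the sequential picture to full-time convergence by the uniqueness of the limit point $Q$; the computation of the energy limit in (iv) via $E[u(t)]=\dashint_{S^n}H\,d\mu_g\to1$ and $\dashint_{S^n}f\,d\mu_g\to f(Q)$ is also the paper's.

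For (v) the paper gives no proof, only the citation ``the proof is exactly the same as \cite[Proposition~5.7]{xz}''. Your outline---approximate Kazdan--Warner identity from the conformal fields to get $\nabla_{S^n}f(Q)=0$, then the second-order expansion $\dashint_{S^n}f\circ\varphi_{p,\epsilon}\,d\mu_{S^n}=f(p)+\tfrac12 c(\epsilon)\Delta_{S^n}f(p)+o(c(\epsilon))$ combined with the monotone energy to force $\Delta_{S^n}f(Q)\le0$, and finally the non-degeneracy hypothesis for the strict sign---is indeed the scheme of \cite{xz}. Two remarks are worth recording. First, the strict inequality really uses the Morse hypothesis $|\nabla f|^2+(\Delta f)^2\ne0$ of Theorem~\ref{main}; when the paper invokes Proposition~\ref{asymptoticbehavior} in the proof of Theorem~\ref{main2} (where no such hypothesis is assumed) it quietly uses only $\Delta_{S^n}f(Q)\le0$. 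Second, the step you flag as ``the main obstacle'' is exactly that: the $C^\alpha$ convergence $\varv(t)\to1$ established here is by itself too weak to make the $o(1)$ in the numerator $E[u(t)]=1+o(1)$ of order $o(c(\epsilon))$, so the sign of $\Delta_{S^n}f(Q)$ cannot be read off without the quantitative shadow-flow estimates of \cite[\S5]{xz} controlling $\varv(t)-1$ and $|p(t)-Q|$ to second order in $\epsilon$. Your sketch correctly identifies this as the place where the real work lies.
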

\begin{proof}
  (i) Assume that $||u(t)||_{C^0(S^n)}\leqslant C$ for some positive number $C$ and all $t\geqslant0$. Observe that 
  $$a_n\frac{\partial}{\partial\eta_e}u(t)+u(t)=(H(t)-\lambda(t)f)u(t)^\frac{n+1}{n-1}+\lambda(t)fu(t)^\frac{n+1}{n-1}.$$
  Hence, by Lemma \ref{bdoflambda} and Proposition \ref{lp}, it is easy to see that $u(t)$ is uniformly bounded in $W^{1,p}(S^n,g_{S^n})$. Then by Corollary \ref{sequentialblowup}, we can obtain that, up to a constant, $f$ can be realized as a mean curvature of some conformal metric. But this contradicts with our assumption.

  (ii) By way of contradiction, we assume that there exists a time sequence $t_k\rightarrow+\infty$ such that
  $$\liminf_{k\rightarrow+\infty}\big(||\varv_k-1||_{C^\alpha(S^n)}+||\varphi_k-Q||_{L^2(S^n,g_k)}+||H_k-1||_{L^p(S^n,g_k)}\big)>0.$$
  But then, by Corollary \ref{sequentialblowup}, a subsequence $u_k\rightarrow u_\infty$ with $g_\infty=u_\infty^{4/(n-1)}g_e$. The metric $g_\infty$, up to a constant, has the mean curvature $f$, contrary to our assumption.
  
  (iii) It follows from Lemma \ref{S(t)not0} and the definition of $Q(t)$ that $Q(t)\rightarrow Q$. This together with the fact that $\dashint_{S^n}H_h~d\mu_h\rightarrow1,$ and Proposition \ref{lp} implies that
  \begin{eqnarray*}
   &&\lim_{t\rightarrow+\infty}\Big(1-\lambda(t)f(Q(t))\Big)=\lim_{t\rightarrow+\infty}\bigg[\dashint_{S^n}H_h~d\mu_h-\lambda(t)f(Q(t))\bigg]\\
   &&\qquad=\lim_{t\rightarrow+\infty}\bigg[\dashint_{S^n}H_h-\lambda(t)f\circ\varphi(t)~d\mu_h+\lambda(t)\dashint_{S^n}f\circ\varphi(t)-f(Q)~d\mu_h\\
   &&\qquad\quad+\lambda(t)\Big(f(Q)-f(Q(t))\Big)\bigg]=0
  \end{eqnarray*}

  (iv) Recall that 
   $$E_f[u(t)]=\frac{\dashint_{S^n}H(t)~d\mu_{g(t)}}{\Big(\dashint_{S^n}f~d\mu_{g(t)}\Big)^\frac{n-1}{n}}.$$
  On one hand, $\dashint_{S^n}H(t)~d\mu_{g(t)}\rightarrow1$; On the other hand, 
  $$\bigg|\dashint_{S^n}f~d\mu_{g(t)}-f(Q)\bigg|\leqslant\dashint_{S^n}\big|f\circ\varphi(t)-f(Q)\big|~d\mu_h\rightarrow0.$$
  Hence, the assertion holds.
  
  (v) Since the proof is exactly the same as that in \cite[Proposition 5.7]{xz}, we omit the details.
\end{proof}
\section{Proof of Theorems \ref{main} and \ref{main1}}
\subsection{Proof of Theorem \ref{main}}
Recall the notations in subsection \ref{blowup}, for $p\in S^n$, $0<\epsilon<+\infty$, if we put the point $p$ at the origin in stereographic coordinates, then the restriction of conformal transform $\phi$ on the boundary $S^n$ is given by $\varphi_{p,\epsilon}=\psi_\epsilon\circ\pi$. Let $g_{p,\epsilon}|_{S^n}=u^{4/(n-1)}_{p,\epsilon}g_{S^n}$ with $u_{p,\epsilon}=|\det(d\varphi_{p,\epsilon})|^{1/2^\#}$. Then
$$d\mu_{g_{p,\epsilon}}\rightarrow\omega_n\delta_p,\qquad\mbox{as}~~\epsilon\rightarrow0.$$
Now, for $\rho\in\mathbb{R}_+$, we denote the sub-level set of $E_f$ by
$$L_\rho=\{u\in X_f:E_f[u]\leqslant\rho\}.$$
It follows from Proposition \ref{asymptoticbehavior} that the concentration phenomenon can only occur at the critical points of $f$ where it takes positive values. For convenience, we label all these critical points $p_1, \dots, p_N$ of $f$ so that $0<f(p_i)\leqslant f(p_j)$ for $1\leqslant i\leqslant j\leqslant N$ and let
$$\beta_i=(f(p_i))^\frac{1-n}{n}=\lim_{\epsilon\rightarrow0}E_f[u_{p_i,\epsilon}], 1\leqslant i\leqslant N.$$
We may assume, w.l.o.g., that all positive critical levels $f(p_i)$, $1\leqslant i\leqslant N$ are distinct. By choosing $s_0=\frac{1}{3}\min_{i\leqslant i\leqslant N-1}\{\beta_i-\beta_{i+1}\}>0$, we then have $\beta_i-2s_0>\beta_{i+1}$ for all $i$.
Now, we are ready to characterize the homotopy on $L_\rho$. We remark here that the contraction mapping given by Xu \& Zhang \cite{xz} does not work anymore. We need a new construction for such a mapping.
\begin{proposition}\label{homotopy}
~~\\
  {\upshape(i)} If $\max\Big\{\beta_1,(\dashint_{S^n}f~d\mu_{S^n})^{(1-n)/n}\Big\}<\beta_0\leqslant\gamma$, where $\beta$ has been chosen in \eqref{beta}, then $L_{\beta_0}$ is contractible.\\
  {\upshape(ii)} For $0<s\leqslant s_0$ and each $i$, the set $L_{\beta_i-s}$ is homotopy equivalent to the set $L_{\beta_{i+1}+s}$.\\
  {\upshape(iii)} For each critical point $p_i$ of $f$ with $\Delta_{S^n}f(p_i)>0$, the set $L_{\beta_i+s_0}$ is homotopy equivalent to the set $L_{\beta_i-s_0}$.\\
  {\upshape(iv)} For each critical point $p_i$ of $f$ with $\Delta_{S^n}f(p_i)<0$, the set $L_{\beta_i+s_0}$ is homotopy equivalent to the set $L_{\beta_i-s_0}$ with $(n-\mbox{ind}_f(p_i))$-cell attached.
\end{proposition}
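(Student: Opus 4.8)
The plan is to use the negative gradient flow \eqref{eeforu} itself as a deformation. By Proposition~\ref{globalexistence} it defines a global continuous semiflow $\eta\colon[0,\infty)\times X_f\to X_f$, and by Lemma~\ref{usefulformulae}(ii) the functional $E_f$ is non-increasing along $\eta$, strictly decreasing wherever $\dashint_{S^n}(\lambda f-H)^2\,d\mu_g\neq0$. Since we argue by contradiction, $f$ is not realized as a boundary mean curvature, so by Corollary~\ref{sequentialblowup} and Proposition~\ref{asymptoticbehavior} the flow from every $u_0\in X_f$ diverges and concentrates as $t\to+\infty$ at one of the points $p_1,\dots,p_N$, with $E_f[\eta(t,u_0)]\downarrow\beta_i=(f(p_i))^{(1-n)/n}$ for the corresponding $i$; thus the $\beta_i$ are the only ``critical levels at infinity''. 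The backbone is the following deformation lemma: \emph{if $[a,b]\subset\mathbb R_+$ contains no $\beta_j$, or (as in {\upshape(iii)}) contains some $\beta_i$ but with $\Delta_{S^n}f(p_i)>0$, then $L_b$ deformation retracts onto $L_a$.} To prove it, note first that $\beta_j\le E_f[u_0]\le b$ forces $\beta_j<a$ whenever $\beta_j\notin[a,b]$, and that a flow line cannot concentrate at a $p_i$ with $\Delta_{S^n}f(p_i)>0$ (Proposition~\ref{asymptoticbehavior}(v)); hence every flow line starting in $L_b$ enters $L_a$ in finite time. Second, a compactness argument yields a lower bound $\dashint_{S^n}(\lambda f-H)^2\,d\mu_g\ge c(a,b)>0$ whenever $a\le E_f\le b$: otherwise one extracts $v_k\in X_f$ with $E_f[v_k]\in[a,b]$ and $\dashint_{S^n}(\lambda_k f-H_k)^2\,d\mu_{g_k}\to0$, and Theorem~\ref{sharpversionofcc} (its hypotheses following from this together with the uniform estimates of \S3 and an interpolation argument for the $L^p$-convergence) forces $v_k$ either to subconverge to a solution of \eqref{mce}, contradicting our hypothesis, or to concentrate at a critical point $p$ of $f$ with $f(p)>0$ and $\Delta_{S^n}f(p)<0$ and $E_f[v_k]\to(f(p))^{(1-n)/n}=\beta_j$, impossible since no such $\beta_j$ lies in $[a,b]$ (in case {\upshape(iii)} the only candidate $\beta_i$ is excluded because $\Delta_{S^n}f(p_i)>0$ and the positive critical values of $f$ are assumed distinct). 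Consequently $E_f$ decreases at a definite rate on $\{a\le E_f\le b\}$, the first-passage time $T(u_0)=\inf\{t:E_f[\eta(t,u_0)]\le a\}$ is finite and continuous, and $(\theta,u_0)\mapsto\eta(\theta\,T(u_0),u_0)$ is the desired strong deformation retraction.

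Parts {\upshape(ii)} and {\upshape(iii)} then follow at once: since $\beta_i-\beta_{i+1}\ge3s_0>2s$, the interval $[\beta_{i+1}+s,\beta_i-s]$ contains no $\beta_j$, so $L_{\beta_i-s}$ retracts onto $L_{\beta_{i+1}+s}$; and $[\beta_i-s_0,\beta_i+s_0]$ contains only the value $\beta_i$, excluded in {\upshape(iii)} because $\Delta_{S^n}f(p_i)>0$, so $L_{\beta_i+s_0}$ retracts onto $L_{\beta_i-s_0}$.

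Part {\upshape(iv)} is the case of a genuine critical level at infinity: now the flow \emph{does} concentrate at $p_i$ (compatibly with $\Delta_{S^n}f(p_i)<0$), so the sublevel set must change topology, and I would carry out a finite-dimensional (Bahri--Coron type) reduction near the ``critical point at infinity'' $p_i$. Recall that the round bubbles satisfy $E[u_{p,\epsilon}]\equiv1$ and $d\mu_{g_{p,\epsilon}}\to\omega_n\delta_p$ as $\epsilon\to0$, so $E_f[u_{p,\epsilon}]=(\dashint_{S^n}f\,d\mu_{g_{p,\epsilon}})^{(1-n)/n}$; expanding $\dashint_{S^n}f\,d\mu_{g_{p,\epsilon}}$ in $\epsilon$ and in $p$ in a normal chart at $p_i$ gives an expansion of the shape $E_f[u_{p,\epsilon}]=\beta_i\bigl(1-c_n\,\frac{\nabla^2_{S^n}f(p_i)[p-p_i]}{f(p_i)}-c_n'\,\epsilon^2\,\frac{\Delta_{S^n}f(p_i)}{f(p_i)}+o(|p-p_i|^2+\epsilon^2)\bigr)$ with $c_n,c_n'>0$. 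Since $\Delta_{S^n}f(p_i)<0$, moving $\epsilon$ into $\epsilon>0$ raises $E_f$, so the $\epsilon$-direction is stable and contributes no unstable dimension; the descending directions are precisely the $n-\mathrm{ind}_f(p_i)$ directions in which $\nabla^2_{S^n}f(p_i)$ is positive definite, i.e.\ in which $f$ increases. Pairing this expansion with the flow so that a neighbourhood of $p_i$ in the bubble family becomes a faithful model for the passage from $L_{\beta_i-s_0}$ to $L_{\beta_i+s_0}$ then shows that the latter is the former with an $(n-\mathrm{ind}_f(p_i))$-cell attached, exactly as for the analogous scalar curvature problem and as in \cite{xz}.

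Finally, part {\upshape(i)}, where the Xu--Zhang contraction (which relied on $f>0$) is unavailable and a new construction is needed. Since $\beta_0>\max\{\beta_1,(\dashint_{S^n}f\,d\mu_{S^n})^{(1-n)/n}\}$, the set $L_{\beta_0}$ contains the round metric $u\equiv1$ and contains no critical level at infinity, so combining the flow with the description of its divergence I would deform $L_{\beta_0}$ onto (a neighbourhood retracting onto) the family $\{u_{p,\epsilon}:f(p)\ge\beta_0^{-n/(n-1)},\ 0<\epsilon\le1\}$, in which the slices $\{\epsilon=1\}$ all equal $u\equiv1$; this family is homeomorphic to the cone, with vertex $u\equiv1$, over the superlevel set $\{x\in S^n:f(x)\ge\beta_0^{-n/(n-1)}\}$, and hence is contractible. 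The delicate point --- the one I expect to be the main obstacle, together with the cell count of {\upshape(iv)} --- is to verify that this cone really lies in $L_{\beta_0}$, i.e.\ that $\dashint_{S^n}f\,d\mu_{g_{p,\epsilon}}\ge\beta_0^{-n/(n-1)}$ along the whole conformal path $\epsilon\mapsto u_{p,\epsilon}$ from a concentrated bubble up to $u\equiv1$; this is where conditions (i)~$\int_{S^n}f\,d\mu_{S^n}>0$, (ii)~$(\max_{S^n}|f|)/\dashint_{S^n}f\,d\mu_{S^n}<2^{1/n}$ and the specific choice \eqref{beta} of $\beta$ (hence of the admissible range of $\beta_0$) must be used, precisely so that the energy never crosses the threshold. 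Granting {\upshape(i)}--{\upshape(iv)}, $L_{\beta_0}$ is built from the empty set by successively attaching $m_i$ cells of dimension $i$ (with the $m_i$ as in \eqref{Morseindex1}) and is contractible, so the non-solvability of the system \eqref{Morseindex} yields a contradiction --- which is the endgame of the proof of Theorem~\ref{main}.
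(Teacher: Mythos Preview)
Your handling of (ii)--(iv) is in line with the paper's, which simply notes that $f>0$ near each $p_i$ and defers entirely to \cite[Proposition~6.1]{xz}; your deformation-lemma sketch is essentially what lies behind that reference. One technical wrinkle: you invoke Theorem~\ref{sharpversionofcc} on a sequence with only $L^2$ smallness of $H_k-\lambda_k f$, and the passage to $L^p$ with $p>n$ ``by interpolation'' would need a uniform higher-integrability bound that the estimates of \S3 supply only \emph{along the flow}, not for an arbitrary sequence in $\{a\le E_f\le b\}$. The clean fix is to argue along a single flow line that lingers too long in the strip, where Proposition~\ref{lp} and Corollary~\ref{sequentialblowup} apply directly and force some $\beta_j\in[a,b]$.

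For (i) your route diverges from the paper's, and here your bubble-cone idea meets a real obstacle when $f$ changes sign: even for $p$ in a superlevel set of $f$, there is no reason the spreading average $\dashint_{S^n}f\,d\mu_{g_{p,\epsilon}}$ stays above $\beta_0^{-n/(n-1)}$---or even stays positive---for all intermediate $0<\epsilon<1$, since the conformal measure can load substantial mass onto $\{f<0\}$. Hypotheses (i)--(ii) of Theorem~\ref{main} do not obviously prevent this, so portions of your cone may leave $X_f$ altogether. The paper's contraction avoids the bubble family entirely. It first runs the flow for a large time $T$ (so that, with $\zeta=\zeta(T)\to0$ tied to $\max_{S^n}u(T,u_0)\to\infty$ via Proposition~\ref{asymptoticbehavior}(i)), and then for $\tfrac12<s\le1$ interpolates algebraically at the level of $2^\#$-th powers:
\[
u_s=\Bigl[\frac{(2-2s)\bigl(\zeta\,u(T,u_0)\bigr)^{2^\#}+(2s-1)}{(2-2s)\,\zeta^{2^\#}+(2s-1)}\Bigr]^{1/2^\#}.
\]
This makes $\dashint u_s^{2^\#}=1$ and $\dashint f\,u_s^{2^\#}>0$ literal convex combinations of positive quantities, while elementary pointwise bounds give
\[
E_f[u_s]\ \le\ \frac{\zeta^2(2-2s)^{(n-1)/n}E[u(T,u_0)]+(2s-1)^{(n-1)/n}}{\bigl(\zeta^{2^\#}(2-2s)\dashint f\,u(T,u_0)^{2^\#}+(2s-1)\dashint f\bigr)^{(n-1)/n}}\ \xrightarrow[T\to\infty]{}\ \Bigl(\dashint_{S^n}f\,d\mu_{S^n}\Bigr)^{\frac{1-n}{n}}<\beta_0,
\]
so for $T$ large the whole homotopy stays in $L_{\beta_0}$. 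The mechanism is that the small scale $\zeta$ damps the concentrated part \emph{uniformly in $s$}, so one never needs to control where the mass of $u(T,u_0)$ sits relative to the sign of $f$---precisely the difficulty your approach leaves open.
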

\begin{proof}
  (i). For each $u_0\in L_{\beta_0}$, we fix a sufficiently large $T>0$ and set $\zeta=\zeta(T)=(\max_{S^n}u(T,u_0))$. It follows from Proposition \ref{asymptoticbehavior} (i) that
  \begin{equation}
    \label{zetagoesto0}
    \lim_{T\rightarrow+\infty}\zeta=0.
  \end{equation}
 In view of the proof of \cite[Lemma 6.2]{xz}, $T$ can be chosen continuously depending on the initial data $u_0$. So, $\zeta$ is continuously depending on $u_0$ too. Define
 $$
 u_s=
 \left\{
 \begin{array}{ll}
u(2sT,u_0),&0\leqslant s\leqslant\frac12,\\
\bigg[\frac{(2-2s)(\zeta u(T,u_0))+2s-1}{(2-2s)\zeta^{2^\#}+2s-1}\bigg]^\frac{1}{2^\#},&\frac12<s\leqslant1.
 \end{array}
 \right.
 $$
 Then, we have the claim

 \noindent{\bf Claim}: The function $u_s$ satisfies $1^\circ.\dashint_{S^n}u_s^{2^\#}~d\mu_{S^n}=1$, $2^\circ. \dashint_{S^n}fu_s^{2^\#}~d\mu_{S^n}>0$ and $3^\circ. E_f[u_s]\leqslant\beta_0$.

 \noindent{\itshape Proof of Claim}: From the choice of the initial data $u_0$, the volume-preserving property of the flow \eqref{eeforu}, Lemma \ref{pos-property} and the decay property of the energy functional $E_f[u]$, it follows that $u_s$ fulfills the said properties in the claim for $0\leqslant s\leqslant\frac12$. Thus, we are left to check that for $\frac12<s\leqslant1$.

 \noindent$1^\circ$. By a direct computation and the volume-preserving property of the flow \eqref{eeforu}, we conclude that
  $$\dashint_{S^n}u_s^{2^\#}~d\mu_{S^n}=\frac{(2-2s)\zeta^{2^\#}
  \dashint_{S^n}u(T,u_0)^{2^\#}~d\mu_{S^n}+2s-1}{(2-2s)\zeta^{2^\#}+2s-1}=1.$$

  \noindent$2^\circ$. It follows from a direct computation, Lemma \ref{pos-property} and assumption (i) in Theorem  that
  $$\dashint_{S^n}fu_s^{2^\#}~d\mu_{S^n}=\frac{(2-2s)\zeta^{2^\#}
  \dashint_{S^n}fu(T,u_0)^{2^\#}~d\mu_{S^n}
  +(2s-1)\dashint_{S^n}f~d\mu_{S^n}}{(2-2s)\zeta^{2^\#}+2s-1}>0.$$

  \noindent$3^\circ$. We set
  $$w_s=\bigg[(2-2s)\big(\zeta u(T,u_0)\big)^{2^\#}+(2s-1)\bigg]^\frac{1}{2^\#}.$$
   Since the energy functional $E_f[u]$ is scale-invariant, we have
  \begin{equation}
    \label{scaleinvariant}
    E_f[u_s]=E_f[w_s]=\frac{\frac{1}{\omega_n}\int_{B^{n+1}}a_n|\nabla w_s|_{g_e}^2~dV_{g_e}+\dashint_{S^n}w_s^2~d\mu_{S^n}}
    {\bigg(\dashint_{S^n}fw_s^{2^\#}~d\mu_{S^n}\bigg)^\frac{n-1}{n}}
    :=\frac{I}{(II)^\frac{n-1}{n}}.
  \end{equation}
 {\bf Estimate of I}: A simple calculation shows that
 \begin{eqnarray*}
    |\nabla w_s|_{g_e}^2&=&\zeta^2(2-2s)^\frac{n-1}{n}\bigg[\frac{(2-2s)\big(\zeta u(T,u_0)\big)^{2^\#}}{(2-2s)\big(\zeta u(T,u_0)\big)^{2^\#}+(2s-1)}\bigg]^\frac{n+1}{n}|\nabla u(T,u_0)|_{g_e}^2\\
  &\leqslant&\zeta^2(2-2s)^\frac{n-1}{n}|\nabla u(T,u_0)|_{g_e}^2,
  \end{eqnarray*}
and by an elementary inequality, we get that
  $$w_s^2=\bigg[(2-2s)\big(\zeta u(T,u_0)\big)^{2^\#}+(2s-1)\bigg]^\frac{n-1}{n}
  \leqslant\zeta^2(2-2s)^\frac{n-1}{n}u^2(T,u_0)+(2s-1)^\frac{n-1}{n}.$$
  Combining the two estimates above yields
  \begin{equation}
    \label{estimateofI}
    I\leqslant\zeta^2(2-2s)^\frac{n-1}{n}E[u(T,u_0)]+(2s-1)^\frac{n-1}{n}.
  \end{equation}
  {\bf Estimate of $II$}: Notice that
  \begin{equation}
    \label{estimateofII}
    II=\dashint_{S^n}fw_s^{2^\#}~d\mu_{S^n}=\zeta^{2^\#}(2-2s)
    \dashint_{S^n}fu^{2^\#}(T,u_0)~d\mu_{S^n}+(2s-1)\dashint_{S^n}f~d\mu_{S^n}.
  \end{equation}
  Plugging \eqref{estimateofI} and \eqref{estimateofII} into \eqref{scaleinvariant} gives
  \begin{equation}\label{E_f[u_s]bound}
  E_f[u_s]=E_f[w_s]\leqslant\frac{\zeta^2(2-2s)^\frac{n-1}{n}E[u(T,u_0)]
  +(2s-1)^\frac{n-1}{n}}{\bigg(\zeta^{2^\#}(2-2s)
  \dashint_{S^n}fu^{2^\#}(T,u_0)~d\mu_{S^n}
  +(2s-1)\dashint_{S^n}f~d\mu_{S^n}\bigg)^\frac{n-1}{n}}.
  \end{equation}
  From the volume-preserving property of flow \eqref{eeforu} and \eqref{poslowerbound}, it follows that
  $$\alpha_1:=\bigg(\frac{1}{E_f[u_0]}\bigg)^\frac{n}{n-1}
  \leqslant\dashint_{S^n}fu^{2^\#}(T,u_0)~d\mu_{S^n}
  \leqslant\max_{S^n}|f|:=\alpha_2.$$
  Moreover, by the fact that $E_f[u(T,u_0)]\leqslant\beta_0$, we get
  $$E[u(T,u_0)]\leqslant\beta_0\bigg(\dashint_{S^n}fu^{2^\#}(T,u_0)~d\mu_{S^n}
  \bigg)^\frac{n-1}{n}\leqslant\beta_0\alpha_2^\frac{n-1}{n}.$$
  Substituting all the estimates above into \eqref{E_f[u_s]bound} yields
  $$E_f[u_s]\leqslant\frac{\zeta^2(2-2s)^\frac{n-1}{n}\beta_0\alpha_2^\frac{n-1}{n}
  +(2s-1)^\frac{n-1}{n}}
  {\bigg(\zeta^{2^\#}(2-2s)\alpha_1+(2s-1)\dashint_{S^n}f~d\mu_{S^n}\bigg)^\frac{n-1}{n}}.$$
  By letting $T\rightarrow+\infty$ in the estimate above, observing the fact \eqref{zetagoesto0} and the choice of $\beta_0$, we have
  $$\lim_{T\rightarrow+\infty}E_f[u_s]\leqslant\bigg(\dashint_{S^n}f~d\mu_{S^n}\bigg)^\frac{1-n}{n}<\beta_0.$$
  By choosing $T>0$ even larger , we thus complete the proof of claim.

   Therefore, we can conclude by the claim that $u_s\in L_{\beta_0}$ for all $0\leqslant s\leqslant1$. Moreover, by the definition of $u_s$, it is easy to see that $u_s=u_0\in L_{\beta_0}$ for $s=0$ and $u_s\equiv1$ for $s=1$. Hence, $u_s$ induces a contraction within $L_{\beta_0}$ and we complete the proof of (i).

   Recall that $f(p_i)>0$ at each critical point $p_i$, we have that $f(x)>0$ in a small neighborhood of $p_i$. Since the proof of (ii), (iii) and (iv) only requires the local information around each critical point $p_i$, it would have no difference from the case that $f$ is strictly positive. Hence, one can follow the exact same proof as that in \cite[Proposition 6.1]{xz}, we omit the details.
\end{proof}

\noindent{\bf\itshape Proof of Theorem \ref{main} and Corollary \ref{cor}}: Suppose the contrary, namely, $f$ cannot be realized as the boundary mean curvature of any conformal metric $g$ on the unit ball. A suitable choice of $\beta_0$ in part (i) of Proposition \ref{homotopy} shows that $L_{\beta_0}$ is contractible. In addition, the flow \eqref{eeforu} defines a homotopy equivalence of the set $\mathscr{E}_0=L_{\beta_0}$ with a set $\mathscr{E}_\infty$ whose homotopy type is that of a point with $n-\mbox{ind}_f(x)$ dimensional cells attached for every critical point $x$ of $f$ on $S^n$ where $f(x)>0$ and $\Delta_{S^n}f(x)<0$. It then follows from \cite[Theorem 4.3]{ch} that
\begin{equation}
  \label{systemequation}
  \sum_{i=0}^ns^im_i=1+(1+s)\sum_{i=0}^ns^ik_i
\end{equation}
holds for the Morse polynomials of $\mathscr{E}_0$ and $\mathscr{E}_\infty$, where $k_i\geqslant0$ and $m_i$ are given in \eqref{Morseindex1}. By equating the coefficients in the polynomials on the left and right hand side of \eqref{systemequation}, we obtain a set of non-trivial solutions of \eqref{Morseindex}, which violates the hypothesis in Theorem. We thus obtain the desired contradiction and the proof of Theorem \ref{main} is completed. Furthermore, by setting $s=-1$ in \eqref{systemequation} we can obtain \eqref{indexcounting} and thus the assertion in Corollary \ref{cor} holds.\qed
\subsection{Proof of Theorem \ref{main1}}
In view of \cite[Lemma 2.1]{ho}, we see that $u$ is a $G$-invariant function if the initial data $u_0\in X_f$ is a $G$-invariant function. Fix any $G$-invariant initial data $u_0\in X_f$, by the uniqueness of the solution of the flow \eqref{eeforu} and the decay of $E_f[u]$, we can assume that
\begin{equation}\label{energybound}
E_f[u(t)]<E_f[u_0],\quad\mbox{for}~~\forall t\in(0,+\infty),
\end{equation}
 Since we have assumed that case (ii) in Theorem \ref{cc} occurs to the corresponding sequential metrics $(g(t_k))_k$, the blow-up behavior of $(g(t_k))_k$ in Proposition \ref{sequentialblowup} will happen. In particular, it follows from Proposition \ref{sequentialblowup} that
\begin{equation}\label{integralonB_r(Q)}
\lim_{k\rightarrow+\infty}\omega_n^{-1}\int_{\partial^\prime B_r(Q)}fu_k^{2^\#}~d\mu_{S^n}=f(Q)=\frac{1}{\lambda_\infty},
\end{equation}
where $r>0$ is arbitrary and $Q$, depending on the choice of $u_0$, is the unique concentration point in Proposition \ref{sequentialblowup}, which also implies that for any $y\in S^n$, if $Q\notin B_r(y)$ for some $r>0$, then
\begin{equation}
  \label{integralonoutsideofB_r(Q)}
  \lim_{k\rightarrow+\infty}\int_{\partial^\prime B_r(y)}fu_k^{2^\#}~d\mu_{S^n}=0.
\end{equation}
Now, we split our argument into two cases

\noindent{\bf Case 1}. $\Sigma=\emptyset$. If this case happens, then we can find $\theta\in G$ such that $\theta(Q)\neq Q$. Since $f$ and $u_k$ are $G$-invariant, we conclude, by \eqref{integralonB_r(Q)} and change of variables, that
\begin{eqnarray*}
  \lim_{k\rightarrow+\infty}\omega_n^{-1}\int_{\partial^\prime B_r(\theta(Q))}fu_k^{2^\#}~d\mu_{S^n}
  &=&\lim_{k\rightarrow+\infty}\omega_n^{-1}\int_{\partial^\prime B_r(Q)}(f\circ\theta(y))(u_k\circ\theta(y))^{2^\#}~d\mu_{S^n}\\
  &=&\lim_{k\rightarrow+\infty}\omega_n^{-1}\int_{\partial^\prime B_r(Q)}fu_k^{2^\#}~d\mu_{S^n}\\
  &=&\frac{1}{\lambda_\infty}\neq0
\end{eqnarray*}
On the other hand, as $\theta(Q)\neq Q$, we can find $r>0$ small enough such that $Q\notin \partial^\prime B_r(\theta(Q))$. Then, by \eqref{integralonoutsideofB_r(Q)}, we have
$$\lim_{k\rightarrow+\infty}\omega_n^{-1}\int_{\partial^\prime B_r(\theta(Q))}fu_k^{2^\#}~d\mu_{S^n}=0,$$
which is a contradiction.

\noindent{\bf Case 2}. $\Sigma\neq\emptyset$ and $\max_\Sigma f\leqslant\dashint_{S^n}f~d\mu_{S^n}$: By Remark \ref{rk2}, we can choose the initial data $u_0\equiv1$ which is obviously a $G$-invariant function. If $Q\notin \Sigma$, then we can obtain a contradiction by repeating the argument in Case 1. Hence, we must have $Q\in \Sigma$. To proceed, we need a refined estimate upon the number $\lambda_\infty$.
By the decay of $E_f[u]$, we have
$$\frac{E[u_k]}{(\dashint_{S^n}fu_k^{2^\#}~d\mu_{S^n})^\frac{n-1}{n}}\leqslant E_f[u_1],$$
for all $k\geqslant1$, which implies, by sharp Sobolev trace inequality and volume-preserving property, that
\begin{equation*}
\dashint_{S^n}fu_k^{2^\#}~d\mu_{S^n}\geqslant \bigg(\frac{1}{E_f[u_1]}\bigg)^\frac{n}{n-1}.
\end{equation*}
From this, it follows that
\begin{eqnarray*}
  \lambda_k&=&E_f[u_k]\bigg(\dashint_{S^n}fu_k^{2^*}~d\mu_{S^n}\bigg)^{-\frac 1n}\\
  &\leqslant& E_f[u_1]\bigg(\frac{1}{E_f[u_1]}\bigg)^{-\frac{1}{n-1}}\\
  &=&\big(E_f[u_1]\big)^\frac{n}{n-1}.
\end{eqnarray*}
By letting $k\rightarrow+\infty$ in the inequality above and \eqref{energybound}, we have
$$\lambda_\infty\leqslant\big(E_f[u_1]\big)^\frac{n}{n-1}
<\big(E_f[u_0]\big)^\frac{n}{n-1}=\frac{1}{\dashint_{S^n}f~d\mu_{S^n}},$$
where we have used the fact $u_0\equiv1$ in the last equality. On the other hand, we know that $\lambda_\infty f(Q)=1$. Hence, we conclude that
$$f(Q)>\dashint_{S^n}f~d\mu_{S^n},$$
which contradicts with our assumption in Theorem \ref{main1}.

\subsection{Proof of Theorem \ref{main2}} Notice tht $\Sigma\neq\emptyset$. If $\max_\Sigma f\leqslant\dashint_{S^n}f~d\mu_{S^n}$, then we can repeat the argument in the case 2 of Theorem \ref{main1} to obtain the assertion. While $\max_\Sigma f>\dashint_{S^n}f~d\mu_{S^n}$, from \cite[Lemma 3.1]{ho}, we can choose an initial data $u_0$ which is invariant under (Sym1) or (Sym2) such that 1) $\dashint_{S^n}fu_0^{2^\#}~d\mu_{S^n}>0$; 2) for sufficiently small $\epsilon>0$ there holds
\begin{equation}\label{chioceofu_0}
E_f[u_0]<\frac{1}{\big(\max_\Sigma f\big)^\frac{n-1}{n}}+\epsilon,
\end{equation}
which implies, by the choice of $\beta$, that
$$E_f[u_0]<\frac{1}{\bigg(\dashint_{S^n} f~d\mu_{S^n}\bigg)^\frac{n-1}{n}}+\epsilon\leqslant\beta.$$
This shows that $u_0\in X_f$. Once we have this fact, the conclusions in Proposition \ref{asymptoticbehavior} will hold. In particular, at the corresponding concentration point $Q$, there holds $\Delta_{S^n}f(Q)\leqslant0$. In view of \cite[Proof of Theorem 1.2]{ho}, we can obtain that there exists a constant $\alpha>0$ such that
$$f(y)=\max_\Sigma f>f(Q)+\alpha.$$
Substituting this inequality into \eqref{chioceofu_0} yields
$$E_f[u_0]<\frac{1}{\big(f(Q)+\alpha\big)^\frac{n-1}{n}}+\epsilon
=\frac{1}{\big(f(Q)\big)^\frac{n-1}{n}}
\bigg(\frac{f(Q)}{(f(Q)+\alpha)}\bigg)^\frac{n-1}{n}+\epsilon.$$
Since $\epsilon$ is sufficiently small, we have
$$\epsilon\leqslant\frac{1}{\big(f(Q)\big)^\frac{n-1}{n}}
\bigg[1-\bigg(\frac{f(Q)}{(f(Q)+\alpha)}\bigg)^\frac{n-1}{n}\bigg],$$
which implies, by Proposition \ref{asymptoticbehavior}, that
$$E_f[u_0]<\big(f(Q)\big)^\frac{1-n}{n}=\lim_{k\rightarrow+\infty}E_f[u_k].$$
But this contradicts with the decay property of the energy functional $E_f[u]$. \qed

\section*{Acknowledgement}
  \noindent This project is supported by the ``Fundamental Research Funds for the Central Universities"

\end{document}